\newcommand{\RNum}[1]{\mathbf{\uppercase\expandafter{\romannumeral #1\relax}}}
\DeclareMathOperator*{\argmin}{arg\,min}
\newcommand{\RR}{\mathds{R}}
\newcommand{\lb}{\textsc{lb}}
\colorlet{blue}{blue!90!black}
\colorlet{red}{red!50!black}
\colorlet{green}{green!50!black}
\newcommand{\refcheckize}[1]{
 \expandafter\let\csname @@\string#1\endcsname#1
 \expandafter\DeclareRobustCommand\csname relax\string#1\endcsname[1]{
 \csname @@\string#1\endcsname{##1}\wrtusdrf{##1}}
 \expandafter\let\expandafter#1\csname relax\string#1\endcsname
}
\setlist[description]{style=multiline,leftmargin=2.3em,topsep=3mm,itemsep=0mm}
\setlist[itemize]{style=standard,leftmargin=2em,topsep=3mm,parsep=0mm,itemsep=0mm}
\newcommand{\subsubsubsection}[1]{\paragraph{#1}\mbox{}}
\newcommand{\real}{\mathds{R}}
\newcommand{\nn}{\mathbb{N}}
\newcommand{\abs}[1]{\left \lvert #1 \right \rvert}
\newcommand{\kh}[1]{\left( #1 \right)}
\newcommand{\dist}{\mathrm{dist}}
\newcommand{\norm}[1]{\left \| #1 \right \|}
\newcommand{\divi}{\text{div}}
\newcommand{\dif}[1][]{\mathrm{d}#1}
\newcommand{\expect}[2]{\mathbb{E}_{#1}\left[{}#2\right]}
\newcommand{\ex}[1]{\mathbb{E}\left[{}#1\right]}
\newcommand{\proba}[1]{\mathbb{P}\left[{}#1\right]}
\newcommand{\field}[1]{\mathcal{{}#1}}
\newcommand{\SG}{{\rm SG}}
\newcommand{\lr}[3]{\left#1 {#2} \right#3}
\newcommand{\brc}[1]{\left( #1 \right)}
\newcommand{\lrangle}[1]{\lr{\langle}{#1}{\rangle}}
\DeclareMathOperator{\GND}{GND}
\newtheorem{theorem}{Theorem}[section]
\newtheorem{proposition}{Proposition}[section]
\newtheorem{lemma}{Lemma}[section]
\newtheorem{example}{Example}[section]
\newtheorem{definition}{Definition}[section]
\newtheorem{assumption}{Assumption}[section]
\newcounter{rmk}
\newtheorem{remark}[rmk]{Remark}
\numberwithin{equation}{section}
\title{\Large\bf A Stochastic Gradient Descent Method for Globally Minimizing Nearly Convex Functions\thanks{The research of this work was supported partly by the National Key R\&D Program of China [No. 2021YFA001300], the National Natural Science Foundation of China [Nos. 12271291, 12271150],
the Hunan Provincial Natural Science Foundation of China [No. 2023JJ10001],
and The Science and Technology Innovation Program of Hunan Province [No. 2022RC1190].} 
}
\date{\today}
\author{
Chenglong Bao\thanks{Yau Mathematical Sciences Center, Tsinghua University, Beijing, China, and Yanqi Lake Beijing Institute of Mathematical Sciences and Applications, Beijing, China (\url{clbao@mails.tsinghua.edu.cn}).}
\qquad
Liang Chen\thanks{School of Mathematics, Hunan University, Changsha, China (\url{chl@hnu.edu.cn}).}
\qquad
Weizhi Shao\thanks{Yau Mathematical Sciences Center, Tsinghua University, Beijing, China (\url{swz22@mails.tsinghua.edu.cn}).}
}
\begin{document}
\maketitle

\begin{abstract}
This paper proposes a stochastic gradient descent method with an adaptive Gaussian noise term for the global minimization of nearly convex functions, which are nonconvex and possess multiple strict local minimizers. The noise term, independent of the gradient, is determined by the difference between the current function value and a lower bound estimate of the optimal value. In both probability space and state space, we show that the proposed algorithm converges linearly to a neighborhood of the global optimal solution. The size of this neighborhood depends on the variance of the gradient and the deviation between the estimated lower bound and the optimal value. In particular, when full gradient information is available and a sharp lower bound of the objective function is provided, the algorithm achieves linear convergence to the global optimum. Furthermore, we introduce a double-loop scheme that alternately updates the lower bound estimate and the optimization sequence, enabling convergence to a neighborhood of the global optimum that depends solely on the gradient variance. Numerical experiments on several benchmark problems demonstrate the effectiveness of the proposed algorithm.

\bigskip\noindent
{\bf Keywords}:
stochastic gradient descent; 
global optimization;
linear convergence;
nearly convex functions

\bigskip\noindent
{\bf MSC2000 subject classification}: {65K05; 90C26; 90C06}
\end{abstract}

\section{Introduction}
Gradient descent (GD) is a classical method for solving the unconstrained nonlinear optimization problem
\[
\label{eq:prob}
\min_{x\in\real^d} f(x),
\]
where $\real^d$ is the $d$-dimensional Euclidean space and the objective function $f$ is differentiable but not necessarily convex.
Given an initial point $x_0\in\real^d$, it progressively calculates
$$
x_{t+1} : = x_t - \eta_t\nabla f(x_t),\quad  t=0,1,\ldots,
$$
until some stopping criterion is satisfied, where $\eta_t>0$ is the step size (or the learning rate in the machine learning community).

The convergence properties of the gradient descent (GD) method have been extensively studied when additional properties are imposed on $f$, such as convexity and Lipschitz continuity of the gradient. However, when the objective function $f$ is non-convex, the convergence analysis becomes challenging. To address this, several regularity conditions have been proposed, including the restricted secant inequality (RSI) \cite{zhang2013gradient}, the Polyak-Lojasiewicz (PL) condition \cite{polyak1963gradient}, and the quadratic growth (QG) condition \cite{necoara2019linear}. Despite these efforts, the main obstacle remains that GD can become trapped in stationary points.

To escape from non-optimal stationary points, a widely adopted strategy involves a noise term to inject random perturbations into the GD updates, i.e., setting
\begin{equation}\label{update:GD-noise}
x_{t+1} : = x_t - \eta_t \nabla f(x_t) + \sigma_t\xi_t,\quad  t=0,1,\ldots,
\end{equation}
where each $\xi_t$ is a random variable following a certain distribution, and $\sigma_t$ is the strength of the noise.
A well-known algorithm in the format of \eqref{update:GD-noise} is the stochastic gradient descent (SGD).
SGD and its variants have been widely used for training deep neural networks, especially when the full gradient is not accessible or too expensive to compute, e.g., when minimizing an expected risk given by
$f(x):=\expect{z}{F(x, z)}$, where $z$ is a random vector drawn from an unknown distribution. 
SGD updates the sequence $\{x_t\}$ iteratively using the rule:
\begin{equation}\label{update:SGD}
x_{t+1}: = x_t - \eta_t\SG(x_t), \quad  t=0,1,\ldots.
\end{equation}
Here, $\eta_t$ is usually called the step size and $\SG(x)$ is the stochastic gradient that satisfies $\ex{\SG(x)}=\nabla f(x)$. 
Hence by letting $\xi_t = \nabla f(x_t) - \SG(x_t)$, we can see that \eqref{update:SGD} is a special case of \eqref{update:GD-noise}. 
The step size $\eta_t$ controls not only the step size, but also the noise strength (like $\sigma_t$ in \eqref{update:GD-noise}), and hence should be carefully tuned in practice \cite{you2019does}.
Note that when using SGD to minimize the expected risk $\expect{z}{F(x, z)}$ one may consider using the empirical risk $\frac{1}{n}\sum_{i = 1}^{n} F(x, z_i)$ as a surrogate. 
When $F(x, z)$ is strongly convex with respect to $x$ and $n$ is sufficiently large, it has been proven in \cite{shalev2009stochastic,bottou2010large} that SGD with $\SG(x_t)=\nabla_xF(x, z_i)$ with $i$ being randomly chosen reaches an $\epsilon$-excess error (the error between the expected risk $f(x)$ and the optimal value $f^*$) with fewer gradient calls than GD in an asymptotic sense.
When $f$ is not convex, the noise induced by $\SG(x)$ is critical for the provable ability of SGD to escape from saddle points~\cite{ge2015escaping,jin2017escape} or sharp minima \cite{kleinberg2018alternative,loshchilov2016sgdr}.

Another commonly used choice of \eqref{update:GD-noise} is to set $\xi_t\sim\mathcal{N}\left(0,\frac{1}{d}I_d\right)$, that is, the standard $d$ multivariate Gaussian distribution with $0$ and $\frac{1}{d}I_d$ being the mean and the covariance matrix, respectively. Here, $I_d$ is the $d\times d$ identity matrix.
In this setting, finding an appropriate decay rate of the noise strength $\sigma_t$ is important for the convergence to the global minimum, as elaborated in \cite{chiang1987diffusion,gelfand1991recursive, geman1986diffusions,hwang1990large,kushner1987asymptotic}.
When the step size $\eta_t$ tends to zero, \eqref{update:GD-noise} approaches a continuous limit described by the stochastic differential equation
\begin{equation}\label{eq:diffusion}
    \dif x = -\nabla f(x) \dif[t] + \sigma(x, t)\dif[w],
\end{equation}
where $x:\real\to\real^d$ is an unknown differentiable function with respect to $t$, $w$ is a standard Brownian motion, and $\sigma$ is a real-valued function. The process $x$ in \eqref{eq:diffusion} is called the diffusion process, which has been used to study the behavior of the stochastic gradient descent (SGD) method.
In general, it has been shown in \cite{chiang1987diffusion, geman1986diffusions} that the critical decay rate of the noise strength $\sigma(x,t)$ is $\mathcal{O}(1/\sqrt{\ln t})$.
Here, the critical decay rate means that to ensure global convergence, the noise strength must not decay faster than $\mathcal{O}(1/\sqrt{\ln t})$; otherwise, the process may get trapped in local minima. 
Moreover, denote $G$ as the domain of attraction for a local minimum $\hat{x}$ (i.e., the largest local domain where the gradient flow converges to $\hat x$ \cite{dembo2009large}), 
fix $\sigma(x, t) \equiv \sigma$ in \eqref{eq:diffusion},
and let $x(0) \in G$ be the starting point.
Then, according to the large deviation principle~\cite{dembo2009large},
the stopping time $\tau_\sigma$ that the diffusion process reaches the boundary $\partial G$ of $G$ satisfies
$$
    \tau_\sigma \sim \exp\left(\frac{2}{\sigma^2} \inf_{x \in \partial G} (f(x) - f(\hat{x}))\right)\  \text{ as }\ \sigma \rightarrow 0^+,
$$
which indicates that the time it takes for the process $x$ to escape from a local minimum could be exponentially long. This formula motivates the adaptive setting of $\sigma$ to reduce the time to escape local minima to a constant.

The recent work \cite{engquist2022algebraically} proposes an adaptive scheme to adjust $\sigma_t$
in \eqref{update:GD-noise} that depends on $f(x_t)$, i.e.,  the objective value in the current iteration.
Specifically, given a monotonically decreasing sequence $\{\upsilon_t\}$, if $x_t$ lies in the sublevel set of $f$ determined by $\upsilon_t$, it accepts $x_{t+1}$ calculated by \eqref{update:GD-noise}. 
Otherwise, it randomly samples $x_{t+1}$ from the standard multivariate Gaussian distribution.
Under certain assumptions, an algebraic convergence rate in both probability space and state space has been established in \cite{engquist2022algebraically}, which substantially improves the logarithmic convergence rate in \cite{chiang1987diffusion, geman1986diffusions}.
More recently, a derivative-free global optimization method was proposed in \cite{engquist2023adaptive} with a similar algebraic convergence rate corresponding to $\eta_t\equiv 0$ but with a carefully chosen scheme of $\sigma_t$ in \eqref{update:GD-noise}. 
However, the iteration complexity of these methods remains unknown, which motivates us to identify a non-convex function class that can achieve these results.

Inspired by the inherent smoothing property of SGD (see details in Remark 1), we introduce the definition of nearly convex functions, which can be viewed as perturbations of one-point strongly convex functions \cite[Assumption 1]{kleinberg2018alternative}. It is shown that nearly convex functions lie between two quadratic functions, thus clearly satisfying the QG condition. When the global minimizer set is unique, the one-point strongly convex function is equivalent to the RSI property, while nearly convex is strictly weaker than this condition.
Additionally, the class of nearly convex functions includes functions that are not PL (see Example 2). Moreover, PL functions can have multiple global minimizers, while nearly convex functions have a unique global minimizer. It is worth mentioning that a nearly convex function can be nonconvex and contain infinitely many saddle points and local minima, as illustrated in \Cref{subsec:examples}, whereas PL functions do not contain saddle points or local minima. One may refer to \Cref{sec:comapare} for more details.

To minimize the nearly convex functions, we propose a new perturbed SGD scheme,  
given by 
\begin{equation}\label{update:sgd-noise}
    x_{t+1} : = x_t - \eta_t \SG(x_t) + \sigma_t\xi_t, \quad  t=0,1,\ldots, 
\end{equation}
where $\sigma_t := \sqrt{\eta_ts(f(x_t-\eta_t\SG(x_t))-f_{\lb})^+}$ and $s>0$ is a hyper-parameter (cf. \Cref{sec:main_results} for details), and $\xi_t\sim\mathcal{N}\left(0,\frac{1}{d}I_d\right)$. 
Here, $s$ is a parameter that controls the noise strength, and $f_{\lb}$ denotes a lower bound estimation of $f$. This noise term, which depends on the function value, adds more noise when the gap between the function value and the lower bound is large and less noise when the gap is small. 
If the objective function $f$ is nearly convex and $f_{\lb} \le f^*$ is a valid lower bound of $f$, we show that the update scheme \eqref{update:sgd-noise} will reach an $\epsilon$-neighborhood of global minimum point linearly in both probability space and state space, where the radius $\epsilon$ depends on the variance of the stochastic gradient $\SG(x)$ and the difference between $f^*$ and $f_{\lb}$.
More specifically, if the full gradient is used as the stochastic gradient, under the nearly convex condition, we prove that the proposed algorithm converges to a global minimizer at a linear rate by setting $f_{\lb} = f^*$ and appropriately choosing $\eta_t$ and $s$, where $f^*$ denotes the global minimum value. This result provides the first theoretical guarantee that noisy gradient descent can efficiently escape local minima and achieve global convergence.

Additionally, we introduce a double-loop version of  \eqref{update:sgd-noise} (including \eqref{update:GD-noise}) which adaptively estimates $f_{\lb}$ such that the gap between $f^*$ and $f_{\lb}$ decreases below any given positive threshold with high probability. 
In particular, for this double-loop version, the iterates with full gradient can find a point in the neighborhood with radius $\epsilon$ of the global minimum point of a nearly convex function in $\mathcal{O}(\ln^2(1/\epsilon))$ steps with high probability.
Our results demonstrate that a specific class of nonconvex functions, i.e., the nearly convex functions, can be efficiently and effectively solved using the function value-dependent noise term in the proposed SGD method.
Numerical experiments on several concrete problems demonstrate the effectiveness of the proposed algorithm and validate our theoretical findings.

The rest of this work is organized as follows. 
In \Cref{sec:notation}, we provide the notation and some preliminary results. 
In \Cref{sec:main_results} we present the main algorithms and their convergence results.
The proofs of the results in this paper are given in \Cref{sec:convergence_proof}, and the numerical experiments are presented in \Cref{sec:numerical_experiments}. 
Finally, we conclude this paper in \Cref{sec:concludsion}.

\section{Notation and preliminaries}
\label{sec:notation}
We use $\nn$ to denote the set of natural numbers, and $\nn^+$ to denote its subset of positive integers.
Let $\real^d$ with $d\in\nn^+$ be the $d$-dimensional Euclidean space endowed with the standard inner product $\langle\cdot, \cdot\rangle$ and its induced norm $\|\cdot\|$. 
Given a point $x\in\real^d$ and a set $S\subset \real^d$, we use $\dist(x,S):=\inf\limits_{y\in S}\|x-y\|$ to denote the distance from $x$ to $S$
and use $\Pi_S(x):=\argmin\limits_{y\in A}\|x-y\|$ to denote the projection of $x$ onto $S$, which is not necessarily nonempty nor single-valued. 
For a continuously differentiable function $f:\real^d\to\real$, we use $f'(x)$ and $\nabla f(x)$ to denote its Jacobian and gradient, respectively.

The notation $\xi \in \mathcal{N}\big(0, \frac{1}{d}I_d\big)$ means that $\xi$ is $d$-variate normally distributed with a zero mean and $\frac{1}{d}I_d$ being the corresponding covariance matrix, where $I_d$ is the $d\times d$ identity matrix. 
We use $\ex{\cdot}$ (or $\expect{\xi}{\cdot }$) to denote expectation (or expectation with respect to $\xi$) and use ``a.s.'' to abbreviate ``almost surely'', which means that the probability of a certain event is one. 
 
In this paper, we focus on the unconstrained optimization problem \eqref{eq:prob}. 
We always assume that the solution set $\cal S$ to \eqref{eq:prob} is non-empty, and use $f^*$ to denote the corresponding optimal function value.

\section{Nearly convex functions}
\label{sec:ncf}
In this part, we introduce a new regularity condition, termed nearly convex, along with a discussion of its key properties. In the subsequent sections, we mainly focus on objective functions that admit a unique global minimizer.

We first propose the definition of one-point strongly convex functions and nearly convex functions. 
\begin{definition}\label{def:one_point}
The set of one-point strongly convex functions with $\alpha>0$ at $\bar x \in \real^d$ for $s \in \real$ is defined by 
\begin{equation*}
    \mathcal{P}_{\alpha}(\bar x, s) = \{\bar{f} \in C^1(\real^d) \mid \bar{f}(\bar x) = s, \langle \nabla \bar{f}(x), x - \bar x \rangle \ge \alpha \norm{x - \bar x}^2,\forall x\in\real^d\}. 
\end{equation*}
\end{definition}
As we will show in \cref{sec:comapare}, one-point strong convexity is equivalent to the RSI condition when the global minimizer is unique. Given a continuously differentiable function $f$, we define $\beta(f, \bar{x}, \alpha)$ that measures the distance from $f$ to $\mathcal{P}_{\alpha}(\bar x, f(\bar x))$ as follows.
\begin{definition}
\label{def:beta}
Given $\alpha>0$, $\bar x \in \real^d$, and $f\in \mathcal{C}^1(\real^d)$. 
Define the constant
\begin{equation*}
\beta(f, \bar x, \alpha) := \inf_{\bar{f} \in \mathcal{P}_{\alpha}(\bar x, f(\bar x))}\sup_{x \ne \bar x} \frac{2\abs{f(x) - \bar{f}(x)}}{\norm{x - \bar x}^2}.
\end{equation*}
\end{definition}
We immediately obtain the following result from \Cref{def:beta}.
\begin{lemma}
$\beta(f, \bar x, \alpha) \le C$ is equivalent to the following statement: $\forall n \in \nn^+$, there exists $\bar{f}_n \in \mathcal{P}_{\alpha}(\bar x, f(\bar x))$ such that
    \begin{equation*}
        \abs{f(x) - \bar{f}_n(x)} \le \kh{\frac{C}{2} + \frac{1}{n}}\norm{x - \bar x}^2
        \quad \forall x \in \real^d.
    \end{equation*}
\end{lemma}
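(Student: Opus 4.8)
The plan is to prove both implications by directly unwinding the definition of the infimum in \Cref{def:beta}. Throughout, abbreviate $g(\bar f) := \sup_{x \ne \bar x} \frac{2\abs{f(x) - \bar f(x)}}{\norm{x - \bar x}^2}$ for $\bar f \in \mathcal{P}_{\alpha}(\bar x, f(\bar x))$, so that by definition $\beta(f, \bar x, \alpha) = \inf_{\bar f \in \mathcal{P}_{\alpha}(\bar x, f(\bar x))} g(\bar f)$. The whole statement is then just a reformulation of ``$\inf \le C$'' in terms of a minimizing sequence, with the quantity $2/n$ playing the role of the usual $\varepsilon$.

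For the ``if'' direction, suppose that for every $n \in \nn^+$ there is some $\bar f_n \in \mathcal{P}_{\alpha}(\bar x, f(\bar x))$ with $\abs{f(x) - \bar f_n(x)} \le \kh{\frac{C}{2} + \frac{1}{n}}\norm{x - \bar x}^2$ for all $x \in \real^d$. Dividing by $\norm{x-\bar x}^2$ for $x \ne \bar x$ and taking the supremum over such $x$ gives $g(\bar f_n) \le C + 2/n$, hence $\beta(f, \bar x, \alpha) \le g(\bar f_n) \le C + 2/n$; letting $n \to \infty$ yields $\beta(f, \bar x, \alpha) \le C$.

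For the ``only if'' direction, assume $\beta(f, \bar x, \alpha) \le C$. In particular the infimum is finite, so $\mathcal{P}_{\alpha}(\bar x, f(\bar x)) \ne \emptyset$. Fix $n \in \nn^+$. Since $\beta(f, \bar x, \alpha) \le C < C + 2/n$, the definition of the infimum provides $\bar f_n \in \mathcal{P}_{\alpha}(\bar x, f(\bar x))$ with $g(\bar f_n) < C + 2/n$. Then for every $x \ne \bar x$ we have $\frac{2\abs{f(x) - \bar f_n(x)}}{\norm{x - \bar x}^2} \le g(\bar f_n) \le C + 2/n$, i.e. $\abs{f(x) - \bar f_n(x)} \le \kh{\frac{C}{2} + \frac{1}{n}}\norm{x - \bar x}^2$; and at $x = \bar x$ the inequality is trivial because $\bar f_n(\bar x) = f(\bar x)$ by membership in $\mathcal{P}_{\alpha}(\bar x, f(\bar x))$. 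This is exactly the asserted statement.

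I expect no genuine obstacle here, since the argument is pure bookkeeping of the $\inf$/$\sup$. The only two points deserving a moment's care are (i) treating $x = \bar x$ separately, which is immediate from the normalization $\bar f_n(\bar x) = f(\bar x)$ built into the definition of $\mathcal{P}_{\alpha}(\bar x, f(\bar x))$, and (ii) observing that $\beta(f, \bar x, \alpha) \le C$ forces $\mathcal{P}_{\alpha}(\bar x, f(\bar x))$ to be nonempty (otherwise the infimum over the empty set would be $+\infty$), so that the approximating sequence $\{\bar f_n\}$ genuinely exists.
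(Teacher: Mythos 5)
Your proof is correct and is exactly the routine unwinding of the $\inf/\sup$ in \Cref{def:beta} that the paper has in mind (the paper states the lemma as immediate from the definition and gives no separate proof). Your two points of care --- the case $x=\bar x$ via the normalization $\bar f_n(\bar x)=f(\bar x)$, and the nonemptiness of $\mathcal{P}_{\alpha}(\bar x, f(\bar x))$ --- are handled properly and add nothing beyond what the definition already supplies.
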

The above lemma provides an intuitive interpretation of $\beta(f, \bar x, \alpha)$. Specifically, it characterizes $f$ as a small perturbation of a one-point strongly convex function, and the perturbation error is bounded by $\frac{\beta(f, \bar x, \alpha)}{2}\norm{x-\bar x}^2$. 
When $\beta(f, \bar x, \alpha)$ is relatively small, we may say that $f$ is ``nearly one-point convex'', which motivates the following definition of nearly convex functions.

\begin{definition}
\label{ass:main}
Let $L\geq\alpha>0$.
We say a continuously differentiable function $f:\real^d\to\real$ is $(\alpha, L)$-nearly convex if the following three conditions hold simultaneously:

\begin{enumerate}
\item[{\bf (a)}] $f$ admits a unique global minimizer $x^*\in\real^d$;

\item[{\bf (b)}] the function $f$ satisfies
 \begin{equation}\label{eq:asp.L}
 \norm{\nabla f(x)} \le L\norm{x - x^*}\quad \forall x\in\real^d;
 \end{equation}

\item[{\bf (c)}] the distance $\beta(f, x^*, \alpha)$ satisfies
\begin{equation}\label{eq:asseq}
    \beta(f, x^*, \alpha) \le \frac{1}{4}\sqrt{\frac{\alpha^5}{dL^3}}.
\end{equation}
\end{enumerate}
\end{definition}

We first provide the following remark on the motivation for studying nearly convex functions and their essential properties.
\begin{remark}
\label{motivation}
Assume that the stochastic gradient oracle takes the form of $\SG(x) = \nabla f(x) + \omega$, where $\omega$ is a zero-mean random noise term that may depend on $x$, SGD~\eqref{update:SGD} can be rewritten as
\begin{equation*}
 x_{t + 1} := x_t - \eta_t\nabla f(x_t) - \eta_t \omega_t,\quad t=0,1,\ldots.
\end{equation*}
Here, $\omega_1, \omega_2, \cdots$ are independent.
Following from \cite{kleinberg2018alternative}, we let $y_t := x_t - \eta_t\nabla f(x_t)$, 
so that $x_{t + 1} = y_t - \eta_t \omega_t$. Then the above SGD iterations can be interpreted as updating $\{y_t\}$ via
\begin{equation}
\label{update:yt}
y_{t + 1}: = y_t - \eta_t \omega_t - \eta_{t+1}\nabla f(y_t - \eta_t \omega_t).
\end{equation}
Taking expectation with respect to $\omega_t$ in \eqref{update:yt}, one obtains
\begin{equation}\label{update:yt.expec}
\expect{\omega_t}{y_{t + 1}} =  y_t - \eta_{t+1} \nabla \expect{\omega_t}{f(y_t - \eta_t \omega_t)}, 
\end{equation}
where $\expect{\omega_t}{f(y_t - \eta_t \omega_t)} := \int f(y_t - \eta_t \omega) \dif[\mathbb{P}](\omega)$ can be viewed as the convolution of the function $f$ and the noise $\omega_t$. 
Denoting $\hat{f}(y):= \expect{\omega_t}{f(y - \eta_t \omega_t)}$. 
One has from \eqref{update:yt.expec} that $\expect{\omega_t}{y_{t + 1}}$ is obtained by performing a single GD step on $\hat{f}(y)$ from $y_t$. 
Since the convolution has a smoothing effect, it may smooth the sharp local minima, which intuitively explains how SGD helps escape from the local minima. 

Consequently, for the function $f(x):= \bar{f}(x) + \epsilon(x)$, where $\bar{f}(x)$ is one-point strongly convex and $\epsilon(x)$ is a small perturbation function, we conjecture that the convolution mentioned above will reduce the role of $\epsilon(x)$ and the SGD step is mainly induced by $\bar{f}(x)$. 
We will validate this idea in \Cref{ass:main} under the condition that the perturbation is not too large.
\end{remark}

In \Cref{ass:main}, the condition \eqref{eq:asp.L} is weaker than the standard $L$-smooth assumption, which requires $\nabla f$ to be $L$-Lipschitz continuous over the entire space. In contrast, \eqref{eq:asp.L} only assumes that $\nabla f$ is isolated calm at the single point $x^*$. 
The constant $\frac{1}{4}\sqrt{\frac{\alpha^5}{dL^3}}$ in \eqref{eq:asseq} is set for ease of the forthcoming convergence analysis of algorithms. This bound may be relaxed to a larger value. 
We should mention that nearly convex functions are not necessarily convex, and we can even find an example that has infinitely many strict local minima and saddle points (see \Cref{subsec:examples} for details). 
We now present several properties of $(\alpha, L)$-nearly convex functions, along with brief proofs.

\begin{proposition}
\label{lm:nearly_convex}
Let $f:\real^d\to\real$ be $(\alpha, L)$-nearly convex and $x^*$ be a global minimizer of $f$ with $f(x^*)=f^*$. Then the following statements are valid.
\begin{description}
\item[\bf (a)] For any function $\bar{f} \in \mathcal{P}_\alpha(x^*, f^*)$ , one has 
\begin{equation*}
    \bar{f}(x) - f^* \ge \frac{\alpha}{2}\norm{x - x^*}^2.
\end{equation*}
\item[\bf (b)] One has $ \beta := \beta(f, x^*, \alpha) < \alpha $ and 
\begin{equation}\label{eq:lm.nearly_convex}
\frac{\alpha - \beta}{2}\norm{x - x^*}^2 \le f(x) - f(x^*) \le \frac{L}{2}\norm{x - x^*}^2.
\end{equation}
Moreover, $x^*$ is the unique global minimizer of $f$.
\end{description}
\end{proposition}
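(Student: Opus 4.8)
The plan is to derive everything from integrating the two gradient inequalities defining an $(\alpha,L)$-nearly convex function along the segment from $x^*$ to an arbitrary point $x$, and then to couple part (a) with the infimum defining $\beta$. I would begin with part (a): fix $\bar f\in\mathcal{P}_\alpha(x^*,f^*)$ and $x\neq x^*$, set $y_t:=x^*+t(x-x^*)$ and $\phi(t):=\bar f(y_t)$ for $t\in[0,1]$, so that $\phi(0)=\bar f(x^*)=f^*$, $\phi(1)=\bar f(x)$, and $\phi'(t)=\langle\nabla\bar f(y_t),x-x^*\rangle$ by the chain rule. Since $y_t-x^*=t(x-x^*)$, evaluating the one-point strong convexity inequality of $\bar f$ at $y_t$ gives $t\,\phi'(t)=\langle\nabla\bar f(y_t),y_t-x^*\rangle\ge\alpha\norm{y_t-x^*}^2=\alpha t^2\norm{x-x^*}^2$, hence $\phi'(t)\ge\alpha t\norm{x-x^*}^2$ for $t\in(0,1]$. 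Integrating over $[0,1]$ yields $\bar f(x)-f^*\ge\frac\alpha2\norm{x-x^*}^2$, which is (a), the case $x=x^*$ being trivial.

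For part (b) I would treat the three assertions in turn. The bound $\beta:=\beta(f,x^*,\alpha)<\alpha$ is a one-line estimate on the constant in \eqref{eq:asseq}: since $L\ge\alpha>0$ and $d\ge1$, one has $\frac{\alpha^5}{dL^3}\le\alpha^2$, so \eqref{eq:asseq} gives $\beta\le\frac14\sqrt{\alpha^2}=\frac\alpha4<\alpha$. The upper estimate $f(x)-f^*\le\frac L2\norm{x-x^*}^2$ uses the same parametrization applied to $f$ itself: with $\psi(t):=f(y_t)$, the chain rule, Cauchy--Schwarz, and \eqref{eq:asp.L} give $\psi'(t)=\langle\nabla f(y_t),x-x^*\rangle\le\norm{\nabla f(y_t)}\,\norm{x-x^*}\le Lt\norm{x-x^*}^2$, and integrating over $[0,1]$ gives the claim.

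For the lower estimate I would invoke the lemma following \Cref{def:beta} with $C=\beta$: for each $n\in\nn^+$ there is $\bar f_n\in\mathcal{P}_\alpha(x^*,f(x^*))$ with $\abs{f(x)-\bar f_n(x)}\le\big(\frac\beta2+\frac1n\big)\norm{x-x^*}^2$ for all $x$. Because $f(x^*)=f^*$, part (a) applies to $\bar f_n$, and the decomposition $f(x)-f^*=\big(f(x)-\bar f_n(x)\big)+\big(\bar f_n(x)-f^*\big)$ yields $f(x)-f^*\ge\big(\frac{\alpha-\beta}2-\frac1n\big)\norm{x-x^*}^2$; letting $n\to\infty$ gives the lower bound in \eqref{eq:lm.nearly_convex}. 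Finally, since $\beta<\alpha$ the coefficient $\frac{\alpha-\beta}2$ is strictly positive, so $f(x)>f^*$ for every $x\neq x^*$, which reconfirms that $x^*$ is the unique global minimizer.

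I do not anticipate a genuine obstacle here: the proposition is essentially bookkeeping around the line-segment integration. The only step requiring care is the interface between part (a) and \Cref{def:beta}: since the infimum defining $\beta$ need not be attained, one cannot apply (a) to a single minimizing $\bar f$, and must instead pass through the $\tfrac1n$-optimal functions $\bar f_n$ furnished by the lemma, taking $n\to\infty$ only at the very end. A secondary bookkeeping point is that each $\bar f_n$ satisfies $\bar f_n(x^*)=f(x^*)=f^*$, so (a) applies verbatim with the shared reference value $f^*$.
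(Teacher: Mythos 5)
Your proposal is correct and follows essentially the same route as the paper's proof: line-segment integration of the one-point strong convexity inequality for (a) and of \eqref{eq:asp.L} for the upper bound in (b), together with the $\tfrac1n$-optimal functions $\bar f_n$ from \Cref{def:beta} and a limit $n\to\infty$ for the lower bound. Your explicit verification that \eqref{eq:asseq} forces $\beta\le\frac\alpha4<\alpha$ (via $L\ge\alpha$ and $d\ge1$) and your careful derivation of $\phi'(t)\ge\alpha t\norm{x-x^*}^2$ from the inequality evaluated at $y_t$ are slightly more detailed than the paper's exposition but substantively identical.
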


\begin{proof}
{\bf (a)} 
Fix any $x \in \real^d$, and define the uni-variate function $g(t): = \bar{f}(x^* + t(x - x^*))$,
so that $g(0) = \bar{f}(x^*)$ and $g(1) = \bar{f}(x)$.
Since $\bar{f} \in \mathcal{P}_\alpha(x^*, f^*)$, by \cref{def:one_point} it satisfies
$\langle \nabla \bar{f}(x), x - x^* \rangle \ge \alpha \norm{x - x^*}^2$ for all $x\in\RR^d$. Then
\begin{equation*}
g'(t) = \langle \nabla \bar{f}(x^* + t(x - x^*)), x - x^*\rangle \ge \alpha t\norm{x - x^*}^2
\quad \forall x\in \real^d.
\end{equation*}
Therefore, it comes from $\bar{f}^*=f^*$ that 
\begin{equation*}
\bar{f}(x) = g(1) = g(0) + \int_0^1 g'(t)\dif t \ge f^* + \frac{\alpha}{2}\norm{x - x^*}^2 \quad \forall x\in \real^d.
\end{equation*}

{\bf (b)}
From \cref{def:beta}, we know that for any $n \in \nn^+$, there exists $\bar{f}_n \in \mathcal{P}_\alpha(x^*, f(x^*))$ satisfying that $\abs{f(x) - \bar{f}_n(x)} \le \kh{\frac{\beta}{2} + \frac{1}{n}}\norm{x - x^*}^2$ for all $x \in \real^d$.
From \textbf{(a)} we have
\begin{equation*}
    \bar{f}_n(x) - f(x^*) \ge \frac{\alpha}{2}\norm{x - x^*}^2\quad \forall x\in \real^d.
\end{equation*}
Thus, it holds that
\begin{equation*}
f(x) - f(x^*) = f(x) - \bar{f}_n(x) + \bar{f}_n(x) - f(x^*) \ge \kh{\frac{\alpha - \beta}{2} - \frac{1}{n}}\norm{x - x^*}^2
\quad \forall x\in \real^d.
\end{equation*}
We obtain the inequality on the left side using the limit with $n\rightarrow +\infty$.
Similarly, according to \eqref{eq:asp.L}, one has for any $x\in\real^d$,
\begin{equation*}
f(x) - f(x^*) 
= \int_0^1 \langle \nabla f(x^* + t(x - x^*)), x - x^*\rangle \dif t
\le \int_0^1 Lt\norm{x - x^*}^2 \dif t
= \frac{L}{2}\norm{x - x^*}^2.
\end{equation*}
Finally, since $\beta < \alpha$ by \eqref{eq:asseq}, one has $f(x) > f(x^*)$ for any $x \ne x^*$.
\end{proof}

\begin{proposition}\label{prop:local}
Let $f:\real^d\to\real$ be an $(\alpha, L)$-nearly convex function with global minimizer $x^*$. Given $\hat x \in \mathbb{R}^d, \hat x \ne x^*$, and let $G$ be any open set satisfying that $\hat x \in G$ and $x^* \notin G$. Then we have
\begin{equation}
    \inf_{x \in \partial G} (f(x) - f(\hat x)) < \frac{1}{4}\sqrt{\frac{\alpha^5}{dL^3}} \norm{\hat{x} - x^*}^2,
\end{equation}
where $\partial G$ denotes the boundary of $G$. 
\end{proposition}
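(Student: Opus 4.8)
The plan is to reduce the infimum over all of $\partial G$ to the value of $f$ at the single point where the segment from $\hat x$ to $x^*$ first leaves $G$, and then to control that value by comparing $f$ with a one-point strongly convex surrogate along that segment.

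For the first step I would set $\gamma(t):=(1-t)\hat x+tx^*$ for $t\in[0,1]$ and let $t^*:=\inf\{t\in[0,1]:\gamma(t)\notin G\}$. Since $G$ is open with $\hat x\in G$ and $\gamma(1)=x^*\notin G$, one gets $t^*\in(0,1]$; moreover $\gamma(t)\in G$ for every $t<t^*$ while $\gamma(t^*)$ cannot lie in the open set $G$, so $z:=\gamma(t^*)\in\partial G$. Hence $\inf_{x\in\partial G}(f(x)-f(\hat x))\le f(z)-f(\hat x)$, and since $z-x^*=(1-t^*)(\hat x-x^*)$ we record $\|z-x^*\|=(1-t^*)\|\hat x-x^*\|$.

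For the comparison step, write $\beta:=\beta(f,x^*,\alpha)$. By the characterization of $\beta$ stated after \Cref{def:beta}, for each $n\in\nn^+$ there is $\bar f_n\in\mathcal{P}_\alpha(x^*,f^*)$ with $|f(x)-\bar f_n(x)|\le(\frac{\beta}{2}+\frac1n)\|x-x^*\|^2$ for all $x$. I would split
\[
f(z)-f(\hat x)=\big[f(z)-\bar f_n(z)\big]+\big[\bar f_n(z)-\bar f_n(\hat x)\big]+\big[\bar f_n(\hat x)-f(\hat x)\big],
\]
bound the first and third brackets by $(\frac{\beta}{2}+\frac1n)\|z-x^*\|^2$ and $(\frac{\beta}{2}+\frac1n)\|\hat x-x^*\|^2$, and handle the middle one via $g_n(t):=\bar f_n(\gamma(t))$: one-point strong convexity of $\bar f_n$ at $x^*$ gives $g_n'(t)=\langle\nabla\bar f_n(\gamma(t)),x^*-\hat x\rangle\le-\alpha(1-t)\|\hat x-x^*\|^2$ for $t\in[0,1)$, so integrating yields $\bar f_n(z)-\bar f_n(\hat x)\le-\alpha\big(t^*-\tfrac{(t^*)^2}{2}\big)\|\hat x-x^*\|^2$. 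Substituting $\|z-x^*\|^2=(1-t^*)^2\|\hat x-x^*\|^2$ and letting $n\to\infty$, the estimate collapses to
\[
f(z)-f(\hat x)\le\Big[\beta-(\alpha+\beta)\,\tfrac{t^*(2-t^*)}{2}\Big]\|\hat x-x^*\|^2 .
\]
Since $t^*\in(0,1]$ forces $\tfrac{t^*(2-t^*)}{2}\in(0,\tfrac12]$ and $\alpha+\beta>0$, the bracket is strictly less than $\beta$, which by \Cref{ass:main}(c) is at most $\frac14\sqrt{\alpha^5/(dL^3)}$; combined with $\|\hat x-x^*\|>0$ this yields the claimed strict inequality.

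The main obstacle is that the naive estimate — bounding $f(z)-f^*\le\frac L2\|z-x^*\|^2$ and $f(\hat x)-f^*\ge\frac{\alpha-\beta}{2}\|\hat x-x^*\|^2$ via \Cref{lm:nearly_convex}(b) and using only $\|z-x^*\|\le\|\hat x-x^*\|$ — produces a bound of order $L$, far too weak. The essential point of routing through $\bar f_n$ is that $\bar f_n$ strictly decreases along the segment toward $x^*$, so its contribution is negative and only the perturbation magnitude $\beta$, kept small by hypothesis (c), survives. Some care is also needed because the infimum defining $\beta$ need not be attained (hence the approximating sequence $\bar f_n$ and the passage $n\to\infty$) and to make the boundary-crossing argument for $z\in\partial G$ rigorous.
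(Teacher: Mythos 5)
Your proof is correct and follows essentially the same route as the paper's: reduce the infimum to the point where the segment joining $\hat x$ and $x^*$ meets $\partial G$, compare $f$ with the approximating surrogates $\bar f_n \in \mathcal{P}_\alpha(x^*, f^*)$, exploit the monotonicity of $\bar f_n$ along that segment, and pass to the limit $n\to\infty$. The only (harmless) difference is that you retain the quantitative decrease $-\alpha\,\tfrac{t^*(2-t^*)}{2}\|\hat x - x^*\|^2$ of $\bar f_n$ along the segment, whereas the paper simply drops that middle term as nonpositive and obtains strictness from $1+t_0^2<2$; both yield the stated strict inequality.
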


\begin{proof}
Let $\bar G = G \cup \partial G$ be the closure of $G$. Since $\hat x \in G, x^* \notin G$, we deduce that the line segment connecting $x^*$ and $\hat x$ must intersect the boundary $\partial G$ at some point $y$. We can express it as $y = (1 - t_0)x^* + t_0\hat x$ for some $t_0 \in (0, 1)$. Therefore, we have
\begin{equation}\label{eq:prop.1}
    \inf_{x \in \partial G} (f(x) - f(\hat x)) \le f(y) - f(\hat x).
\end{equation}
On the other hand, by the $(\alpha, L)$-nearly convexity of $f$, there exists a sequence of functions $\bar f_n \in \mathcal{P}_{\alpha}(x^*, f(x^*))$ such that 
\begin{equation*}
    \abs{f(x) - \bar f_n(x)} \le \brc{\frac{1}{8}\sqrt{\frac{\alpha^5}{dL^3}}+\frac{1}{n}}\norm{x - x^*}^2 \quad \forall x \in \real^d.
\end{equation*}
It follows that
\begin{equation*}
    f(y) - f(\hat x) \le \bar f_n(y) - \bar f_n(\hat x) + \brc{\frac{1}{8}\sqrt{\frac{\alpha^5}{dL^3}} + \frac{1}{n}}\brc{\norm{y - x^*}^2 + \norm{\hat x - x^*}^2}.
\end{equation*}
Define $g_n(t) = \bar f_n(x^* + t(\hat x - x^*))$, so that
\begin{equation*}
    g_n'(t) = \lrangle{\nabla\bar f_n(x^* + t(\hat x - x^*)), \hat x - x^*} \ge \alpha t\norm{\hat x - x^*}^2,
\end{equation*}
where the inequality follows from \cref{def:one_point}. This shows that $g_n(t)$ is monotonically increasing on $t \in [0, 1]$. Since $y = (1 - t_0)x^* + t_0\hat x$, we have
\begin{equation*}
    \bar f_n(y) = g_n(t_0) \le g_n(1) = \bar f_n(\hat x),
\end{equation*}
and then
\begin{equation*}
    f(y) - f(\hat x) \le \brc{\frac{1}{8}\sqrt{\frac{\alpha^5}{dL^3}} + \frac{1}{n}}(1 + t_0^2)\norm{\hat x - x^*}^2.
\end{equation*}
Letting $n \to \infty$, we obtain
\begin{equation}\label{eq:prop.2}
    f(y) - f(\hat x) \le \frac{1}{8}\sqrt{\frac{\alpha^5}{dL^3}}(1 + t_0^2)\norm{\hat x - x^*}^2 < \frac{1}{4}\sqrt{\frac{\alpha^5}{dL^3}} \norm{\hat{x} - x^*}^2.
\end{equation}
Combining \cref{eq:prop.1,eq:prop.2}, we complete the proof.
\end{proof}

\begin{remark}
    Suppose $\hat x$ is a strict local minimizer of $f$, and let $G$ be its domain of attraction (the largest local domain where the gradient flow converges to $\hat x$).
    Since $\hat x$ is a strict local minimizer, $G$ is a nonempty open set with $\hat x \in G$. Note that $x^*$ is the global minimizer, it follows that $x^* \notin \bar G$.
    Then \Cref{prop:local} implies that the barrier associated with $\hat x$ must not exceed $\frac{1}{4}\sqrt{\frac{\alpha^5}{dL^3}} \norm{\hat{x} - x^*}^2$. 
\end{remark}

\Cref{lm:nearly_convex} and \Cref{prop:local} show the global and local structural properties of nearly convex functions, respectively. In particular, \Cref{lm:nearly_convex} shows that nearly convexity implies the quadratic growth condition. Moreover, the local structure described in \Cref{prop:local} intuitively illustrates the usefulness of this condition: by limiting the depth of local minima, it facilitates efficient escape from local minima.

\subsection{Comparison to existing regularity conditions}
\label{sec:comapare}
To analyze optimization algorithms for solving \eqref{eq:prob}, numerous regularity conditions have been proposed and extensively studied. The most popular conditions for first-order algorithms include the following.

\begin{definition}[Restricted Secant Inequality (RSI)]
There exists a constant $\mu_r > 0$ such that
\begin{equation}\label{eq:RSI}
\langle \nabla f(x), x - x_p^* \rangle \geq \mu_r \|x - x_p^*\|^2 = \mu_r \dist(x, \mathcal{S})^2 \quad
\forall x \in \real^d, x_p^* \in \Pi_{\cal S}(x).
\end{equation}
\end{definition}

\begin{definition}[Polyak–Łojasiewicz (PL) Condition]
There exists a constant $\mu_p > 0$ such that
\begin{equation}\label{eq:PL}
\|\nabla f(x)\|^2 \geq 2\mu_p (f(x) - f^*) \quad
\forall x \in \real^d.
\end{equation}
\end{definition}

\begin{definition}[Quadratic Growth (QG)]
There exists a constant $\mu_q > 0$ such that
\begin{equation}\label{eq:QG}
f(x) - f^* \geq \frac{\mu_q}{2} \|x - x_p^*\|^2 = \frac{\mu_q}{2} \dist(x, \mathcal{S})^2 \quad
\forall x \in \real^d, x_p^* \in \Pi_{\cal S}(x).
\end{equation}
\end{definition}

Note that all of the conditions in the above definitions are relaxations of the strong convexity.
Moreover, these conditions do not imply the convexity of the objective functions.
For both the RSI and the PL condition, they imply that every stationary point of $f$ must be a global minimizer. However, none of these conditions require the global solution set $\cal S$ to be a singleton.
Under the additional assumption that $f$ is $L$-smooth (i.e., $\nabla f(x)$ is $L$-Lipchitz continuous), \cite{polyak1963gradient} and \cite{guille2022gradient} showed that either the RSI or the PL condition independently guarantees the linear convergence of GD. 
More recently, the convergence of several SGD algorithms was obtained under either the RSI or the PL condition \cite{kleinberg2018alternative,bassily2018exponential}. 
Although the QG condition alone does not ensure global convergence, it often plays a crucial role in convergence analysis.

The relationship among these conditions also shows their distinct roles. Under the $L$-smooth assumption, we typically have that the RSI implies the PL condition, and the PL condition implies the QG condition; see e.g., \cite{karimi2016linear}. 
Furthermore, \cite{liao2024error} shows that if $f$ is $\rho$-weakly convex with $\rho < \mu_q$, i.e. $f(x) + \frac{\rho}{2}\norm{x - x^*}^2$ is convex, then the RSI, the PL condition, and the QG are equivalent.

\begin{figure}
\centering
\includegraphics[width=.4\textwidth]{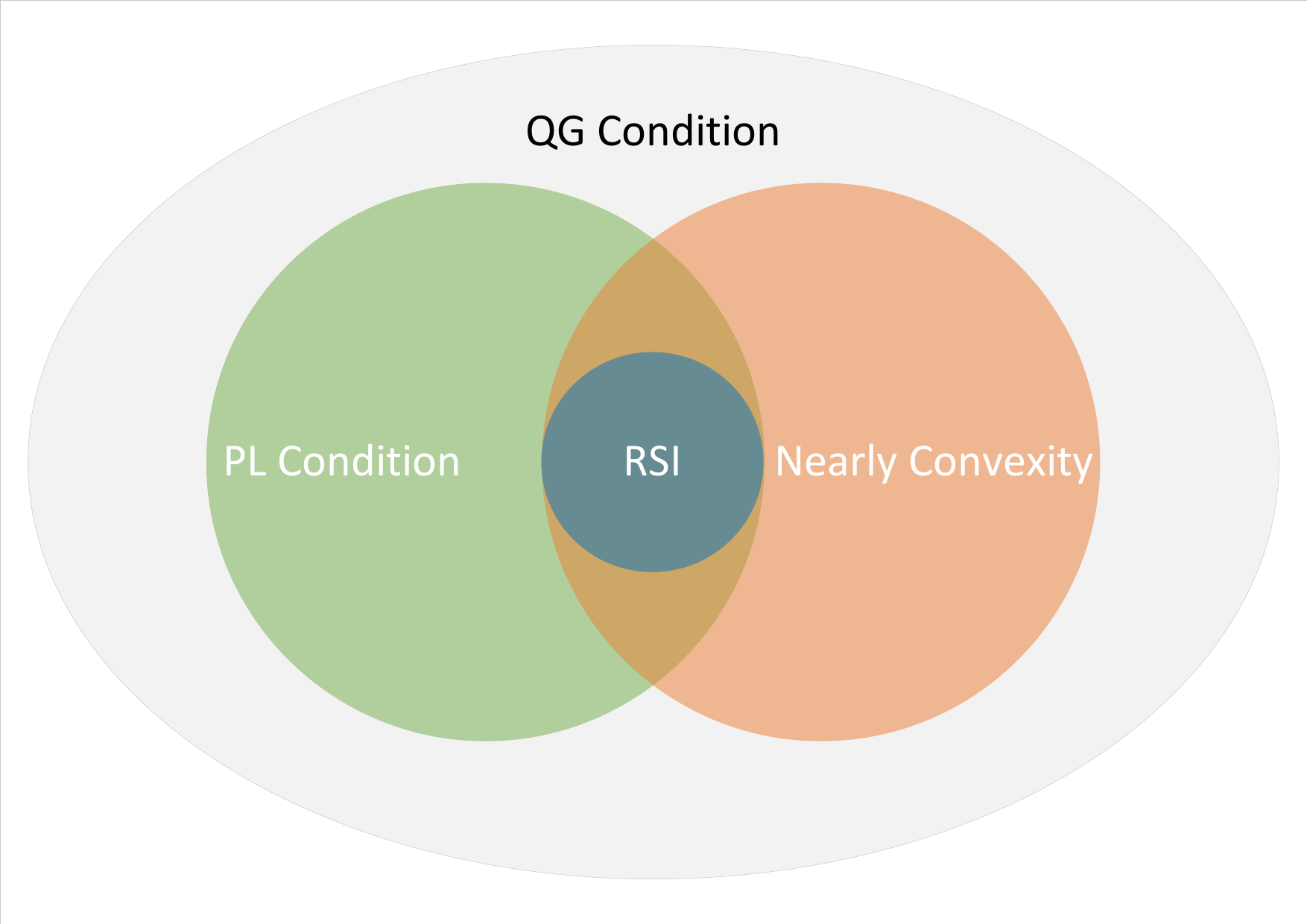}
\caption{The relationships among RSI \eqref{eq:RSI}, PL Condition \eqref{eq:PL}, Nearly Convexity (\Cref{ass:main}), and QG Condition \eqref{eq:QG}, under the assumption that $f$ is $L$-smooth and admits a unique global minimizer.}
\label{fig:relationships}
\end{figure}

It is of interest to make a comparison between nearly convexity and other regularity conditions introduced above. 
For simplicity, we assume that $f$ is $L$-smooth and admits a unique global minimizer $x^*$, and summarize their relationships in \Cref{fig:relationships}.
We first observe that the RSI condition \eqref{eq:RSI} is equivalent to the one-point strong convexity defined in \cref{def:one_point}, since we assume $\mathcal{S} = \{x^*\}$. This directly implies that RSI leads to nearly convexity.
On the other hand, the nearly convex property does not imply the PL condition.
This is because the PL condition requires all stationary points to be global optima, whereas nearly convex functions may admit strict local minima and saddle points.
However, whether the PL condition implies nearly convexity remains unclear. 
In particular, it is nontrivial to determine whether the PL condition is sufficient to guarantee \eqref{eq:asseq}. 
For an $L$-smooth function $f$ satisfying PL condition \eqref{eq:PL} with parameter $\mu_p > 0$, it holds that 
\begin{equation*}
    \frac{\mu_p}{2}\norm{x - x^*}^2 \le f(x) - f^* \le \frac{L}{2}\norm{x - x^*}^2\quad \forall x \in \real.
\end{equation*}
where the lower bound is obtained from \cite[Theorem 2]{karimi2016linear}, and the upper bound is a consequence of the $L$-smoothness. 
Now consider the function $\bar f(x) = f^* + \frac{\mu_p + L}{4}\norm{x - x^*}^2$, which satisfies RSI with parameter $\mu_r = \frac{\mu_p + L}{2}$.
Then we obtain that
$$
\abs{f(x) - \bar f(x)} \le \frac{L - \mu_p}{4}\norm{x - x^*}^2.
$$
This rough estimation yields only the bound
\begin{equation*}
    \beta\brc{f, \frac{\mu_p + L}{2}, f^*} \le \frac{L - \mu_p}{2},
\end{equation*}
which is far from the bound in \eqref{eq:asseq}. 
However, this does not imply that the PL condition cannot lead to nearly convexity, as a better choice of $\bar f$ may provide a sharper estimate. In particular, it can be shown that in the one-dimensional case, the PL condition does imply nearly convexity. In fact, in the one-dimensional case, the PL condition directly implies RSI. Specifically, in this case, if the PL condition holds, we have for all $x \in \real$ that 
\begin{equation*}
    \abs{f'(x)}^2 \ge 2\mu_p(f(x) - f^*) \ge \mu_p^2 \abs{x - x^*}^2
    \Rightarrow \abs{f'(x)} \ge \mu_p\abs{x - x^*}.
\end{equation*}
Moreover, the PL condition ensures that $x^*$ is the unique stationary point of $f$. Therefore, $f(x)$ must be strictly decreasing on $x < x^*$ and strictly increasing on $x > x^*$, which implies that
\begin{equation*}
    f'(x) \cdot (x - x^*) \ge \mu_p\abs{x - x^*}^2
    \quad \forall x \in \real.
\end{equation*}
This is exactly the RSI with parameter $\mu_r = \mu_p$. 
Since RSI implies nearly convexity, we conclude that the PL condition implies nearly convexity in the one-dimensional case.

\subsection{Examples of nearly convex functions}
\label{subsec:examples}
For ease of understanding, we provide some nontrivial examples of nearly convex functions. We begin with a simple subclass.

\begin{example}
\label{example:ncf.general}
Consider a function $f \in C^1(\real^d)$ that satisfies $\norm{\nabla f(x)} \le L\norm{x - x^*}$ and
\begin{equation*}
    \abs{f(x) - \frac{\alpha}{2}\norm{x - x^*}^2} \le \frac{1}{8}\sqrt{\frac{\alpha^5}{dL^3}}\norm{x - x^*}^2,
\end{equation*}
then we have $f$ is $(\alpha, L)$-nearly convex. This follows directly from the fact that $\frac{\alpha}{2}\norm{x - x^*}^2$ is one-point strongly convex.
\end{example}

It is worth mentioning that the functions in \Cref{example:ncf.general} can be nonconvex and may even admit strict local minima. 
In the following, we present two explicit examples that are nearly convex but not convex, both of which are special cases of \Cref{example:ncf.general}.

\begin{figure}
\centering
\includegraphics[width=.9\textwidth]{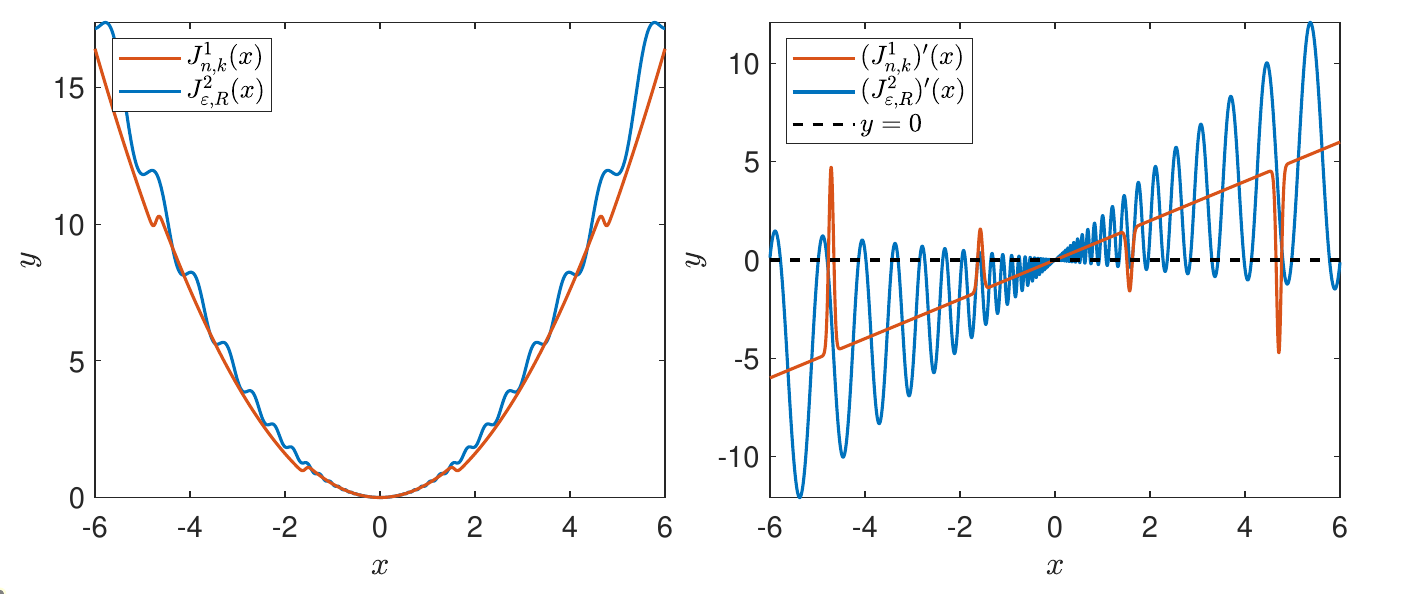}
\caption{Plots of the functions $J_{n, k}^1(x)$ and $J_{\varepsilon, R}^2(x)$ defined in \Cref{example:nearly_convex,example:sin_log}, together with their derivatives. 
For $J_{n, k}^1(x)$, we choose $k = 1$ and $n = n_1 = 199$. For $J_{\varepsilon, R}^2(x)$, we set $\varepsilon = \frac{2}{27}$ and $R = \frac{\sqrt{18161}}{8}$, such that $\varepsilon\sqrt{1 + R^2} = \frac{5}{4}$ and $4\varepsilon\brc{1 + \varepsilon\sqrt{1 + R^2}}^{3/2} = 1$. The two functions are nearly convex under these settings, as illustrated in \Cref{example:nearly_convex,example:sin_log}.}
\label{fig:ncvx_example}
\end{figure}

\begin{example}
\label{example:nearly_convex}
For $n, k\in \nn^+$, define the functions
\begin{equation*}
J_{n, k}^1(x) := \frac{1}{2}x^2 - \kh{1 + \frac{1}{k}}\int_{0}^{x} t(\sin t)^{2n} \dif t, \quad x\in\real.
\end{equation*}
Note that each $J_{n, k}^1$ is continuously differentiable with $(J_{n, k}^1)'(x) = x(1 - (1 + k^{-1}) (\sin x)^{2n})$. 
Moreover, the stationary points of $J_{n, k}^1$ are given by 
$x_j^\pm = j\pi \pm \arcsin{\kh{\frac{k}{k+1}}^{\frac{1}{2n}}},j\in\nn$, together with the origin.

Now, we show that for any $k \in \nn^+$, there exists $n_k \in \nn^+$, such that $J_{n, k}^1(x)$ is $(\frac{21}{25}, 1)$-nearly convex whenever $n \ge n_k$. 
It is easy to see that $\abs{(J_{n, k}^1)'(x)} \le   \abs{x - x^*}$, 
i.e., $(J_{n, k}^1)'(x)$ is Lipschitz continuous at $x = x^*$ with $L=1$ being the corresponding modulus. 
Define $g_n(x) := \int_{0}^{x} (\sin t)^{2n} \dif t$. 
For $x \in [0, \frac{\pi}{2}]$, 
one has $g_n''(x) = 2n(\sin x)^{2n-1}\cos x \ge 0$, 
which implies that $g$ is convex over $\left[0, \frac{\pi}{2}\right]$. 
Thus, one can get 
$$
\begin{array}{ll}
g_n(x) \le \frac{2}{\pi} g_n\kh{\frac{\pi}{2}}x \le \frac{2}{\pi}g_n(\pi)x, \quad\forall x \in \left[0, \frac{\pi}{2}\right],
\end{array}
$$
For~$x \in [\frac{\pi}{2}, \pi]$, one has $g_n(x) \le g_n(\pi) \le \frac{2}{\pi}g_n(\pi)x$. Then, for any $x\geq 0$, by writing $x=y+j\pi$ with $y\in[0,\pi]$ and $j\in\nn^+$, one can get
\begin{equation*}
\begin{array}{l}
g_n(x)=g_n(y+j\pi) = j g_n (\pi) + g_n(y) \le \frac{2}{\pi}g_n(\pi)(j\pi + y) = \frac{2}{\pi}g_n(\pi)x.
\end{array}
\end{equation*}   
Consequently, one has $\abs{g_n(x)} \le \frac{2}{\pi}g_n(\pi)\abs{x}$ for all $x \in \real$.
Note that for any $n\in \nn^+$ one has 
$$
g_n(\pi )=\int_{0}^{\pi } (\sin t)^{2n} \dif t 
=\frac{(2n-1)!!}{(2n)!!}\pi 
\quad
\mbox{with}
\quad 
\frac{(2n-1)!!}{(2n)!!}
:=\frac{2n-1}{2n}\times 
\frac{2n-3}{2n-2}\times 
\cdots
\times\frac{1}{2}. 
$$
We have $g_n(\pi)$ decreasing along with $n$ and converges to zero. 
Then for any $k \in \nn^+$, one can define $n_k$ by
\begin{equation}\label{def:nk}
    n_k := \min_{n} \left\{n \left|\frac{(2n-1)!!}{(2n)!!}\pi\le\frac{2k\pi}{25(k + 1)}\right.\right\}. 
\end{equation}
Moreover, one has $g_{n}(\pi )\le\frac{2k\pi}{25(k + 1)},\;\forall n\ge n_k$. 
When $n \ge n_k$, it holds that  
$$
0 \le \int_{0}^{x} t(\sin t)^{2n} \dif t
= \int_{0}^{\abs{x}} \abs{t}(\sin t)^{2n} \dif t
\le \abs{x} \abs{g_n(x)}
< \frac{2}{\pi}g_n(\pi)x^2 \le \frac{4k}{25(k + 1)}x^2.
$$
Hence one can get
$$
-\frac{4}{25}x^2 \le J_{n, k}^1(x) - \frac{1}{2}x^2 \le 0.
$$
As a consequence, letting $\alpha = \frac{21}{25}$, $L = d = 1$, and $x^* = 0$, we get
$$
\abs{J_{n, k}^1(x) - \frac{21}{50}x^2} \le \frac{2}{25}x^2
< \frac{\alpha}{8}\sqrt{\frac{\alpha^3}{dL^3}}\norm{x - x^*}^2.
$$
This validates that $J_{n, k}^1$ is a special case of \cref{example:ncf.general}, and is $(\frac{21}{25},1)$-nearly convex, whenever $n\ge n_k$. 
Moreover, from \eqref{def:nk}, one can explicitly calculate that  $n_1 = 199, n_2 = 112, n_3 = 89$, and  $n_4 = 78$. 
\end{example}

\begin{example}\label{example:sin_log}
For $\varepsilon \in (0, 1)$ and $R > 0$, consider the function
\begin{equation}\label{eq:sin_log}
J_{\varepsilon, R}^2(x) = 
\begin{cases}
    \dfrac{1 + \varepsilon\sin\kh{2R\log \abs{x}}}{2}x^2, & x \ne 0,\\
    0, & x = 0.
\end{cases}
\end{equation}
Obviously,  $J_{\varepsilon, R}^2(x)$ is continuously differentiable at $x = 0$ with $(J_{\varepsilon, R}^2)'(0) = 0$. And we observe that $\abs{J_{\varepsilon, R}^2(x) - \frac{1}{2}x^2} \le \frac{\varepsilon}{2}x^2$, this shows that $x^* = 0$ is the unique global minimizer of $J_{\varepsilon, R}^2$, and $\beta(J_{\varepsilon, R}^2, x^*, \alpha) \le \varepsilon$ with $\alpha = 1$. Next, we prove the following nearly convex properties of $J_{\varepsilon, R}^2$:
\begin{description}
    \item[$\bullet$] When $\varepsilon\sqrt{1 + R^2} < 1$, $J_{\varepsilon, R}^2$ is $(1 - \varepsilon\sqrt{1 + R^2}, 1 + \varepsilon\sqrt{1 + R^2})$-nearly convex.
    \item[$\bullet$] When $4\varepsilon\brc{1 + \varepsilon\sqrt{1 + R^2}}^{\frac{3}{2}} \le 1$, $J_{\varepsilon, R}^2$ is $(1, 1 + \varepsilon\sqrt{1 + R^2})$-nearly convex.
\end{description}
For the derivation of $J_{\varepsilon, R}^2$, when $x \ne 0$, we obtain
\begin{equation*}
\begin{aligned}
    (J_{\varepsilon, R}^2)'(x) &= (1 + \varepsilon\sin(2R\log \abs{x}) + \varepsilon R\cos(2R\log \abs{x})) \cdot x\\
    &= \kh{1 + \varepsilon\sqrt{1 + R^2}\sin(2R\log \abs{x} + \xi)}x,
\end{aligned}
\end{equation*}
where the angle $\xi$ satisfies 
\begin{equation*}
\sin \xi = \frac{R}{\sqrt{1 + R^2}},\quad \cos \xi = \frac{1}{\sqrt{1 + R^2}}.
\end{equation*}
This leads to
\begin{equation*}
\abs{(J_{\varepsilon, R}^2)'(x)} \le \kh{1 + \varepsilon\sqrt{1 + R^2}}\abs{x},
\end{equation*}
which shows that $f$ in \eqref{eq:sin_log} satisfies the condition \eqref{eq:asp.L} with $L = 1 + \varepsilon\sqrt{1 + R^2}$.
When $\varepsilon\sqrt{1 + R^2} < 1$, one can see that
\begin{equation*}
\lrangle{\nabla J_{\varepsilon, R}^2(x), x} \ge \kh{1 - \varepsilon\sqrt{1 + R^2}}x^2,
\end{equation*}
which shows that $J_{\varepsilon, R}^2 \in \mathcal{P}_{1 - \varepsilon\sqrt{1 + R^2}}(0, 0)$.
In this case, $\beta(J_{\varepsilon, R}^2, x^*, 1 - \varepsilon\sqrt{1 + R^2}) = 0$ and we have that $J_{\varepsilon, R}^2$ is $(1 - \varepsilon\sqrt{1 + R^2}, 1 + \varepsilon\sqrt{1 + R^2})$-nearly convex. 
For the case $4\varepsilon\brc{1 + \varepsilon\sqrt{1 + R^2}}^{\frac{3}{2}} \le 1$, we take $\alpha = 1$ and get $\beta(J_{\varepsilon, R}^2, x^*, \alpha) \le \varepsilon$. Hence the condition \eqref{eq:asseq} holds since 
\begin{equation*}
\beta(J_{\varepsilon, R}^2, x^*, \alpha) \le \varepsilon \le \dfrac{1}{4\brc{1 + \varepsilon\sqrt{1 + R^2}}^{\frac{3}{2}}}.
\end{equation*}
In this case, we obtain that $J_{\varepsilon, R}^2$ is $(1, 1 + \varepsilon\sqrt{1 + R^2})$-nearly convex.
\end{example}

\begin{table}[h]
\centering
\scriptsize
\begin{tabular}{c|cc}
\hline
\textbf{Conditions} & \textbf{Range} & \textbf{Parameter}\\
\hline
\\[-3pt]
SC & $\varepsilon\sqrt{1 + 5R^2 + 4R^4} < 1$ & $1 - \varepsilon\sqrt{1 + 5R^2 + 4R^4}$ \\[4pt]

RSI \eqref{eq:RSI} & $\varepsilon\sqrt{1 + R^2} < 1$ & $1 - \varepsilon\sqrt{1 + R^2}$ \\[4pt]

PL \eqref{eq:PL} & $\varepsilon\sqrt{1 + R^2} < 1$ & $\frac{\brc{1 - \varepsilon\sqrt{1 + R^2}}^2}{1 + \varepsilon}$ \\[4pt]

QG \eqref{eq:QG} & $\varepsilon < 1$ & $1 - \varepsilon$ \\[8pt]
\multirow{2}{*}{NC} & $\varepsilon\sqrt{1 + R^2} < 1$ & $(1 - \varepsilon\sqrt{1 + R^2}, 1 + \varepsilon\sqrt{1 + R^2})$ \\
& or $4\varepsilon\brc{1 + \varepsilon\sqrt{1 + R^2}}^{3/2} \le 1$ & or $(1, 1 + \varepsilon\sqrt{1 + R^2})$ 
\\
\hline
\end{tabular}
\caption{The ranges of $\varepsilon$ and $R$ for which \Cref{example:sin_log} satisfies various regularity conditions. The first column shows the condition names, where SC stands for strongly convex and NC stands for nearly convex. The Range column indicates the required ranges of $\varepsilon$ and $R$ for each condition to hold, while the Parameter column shows parameters of each condition when the ranges are satisfied.}
\label{tbl:sin_log}
\end{table}

Based on the discussion in \Cref{example:sin_log}, we summarize in \Cref{tbl:sin_log} the ranges of $\varepsilon$ and $R$ for which different regularity conditions hold. The table compares strongly convex (SC), RSI, PL condition, QG condition, and nearly convex (NC). Among these common conditions, excluding QG, NC has the broadest applicable range, as it holds either when $\varepsilon\sqrt{1 + R^2} < 1$ or when $4\varepsilon\brc{1 + \varepsilon\sqrt{1 + R^2}}^{3/2} \le 1$. In contrast, other conditions require more restrictive conditions like $\varepsilon\sqrt{1 + R^2} < 1$ or even $\varepsilon\sqrt{1 + 5R^2 + 4R^4} < 1$ for strong convexity. The QG condition is comparatively weak, holding for all $\varepsilon \in (0, 1)$ and $R > 0$; however, this also highlights its limitations, as QG alone does not ensure global convergence of GD or SGD. Detailed analyses for the conditions other than nearly convex are provided in \Cref{appendix:reg_cond.sin_log}.

\section{Stochastic gradient descent with Gaussian noise}
\label{sec:main_results}
In this part, we study stochastic gradient descent methods with adaptive Gaussian noise for minimizing nonconvex differentiable functions of the form \eqref{eq:prob}. Throughout, we typically adopt a constant step size (or learning rate), i.e., $\eta_t \equiv \eta$.
We begin by introducing and analyzing the Gaussian Noise Descent (GND) algorithm, presented in \Cref{alg:gngd}. Building upon this, we further develop a double-loop variant, the Double-Loop Gaussian Noise Descent (DL-GND) algorithm, as shown in \Cref{alg:dlgnd}.

\begin{algorithm}
 \caption{A Gaussian noise descent (GND) algorithm for solving \eqref{eq:prob}}
 \label{alg:gngd}
 \small
 \KwIn{
The initial point $x_0 \in \real^d$,
the stochastic gradient oracle $\SG(x)$,
the step size $\eta>0$,
the noise factor $s>0$,
a lower bound estimation $f_{\lb}$ of $f$, and the total number of iterations $T$.}
\KwOut{$x_{t_*}$.}
\For{$t = 0,\ldots,T - 1$}
{
$x_{t + \frac{1}{2}} \leftarrow x_{t} - \eta\SG(x_{t})$\;

$\sigma_t \leftarrow \sqrt{\eta s\kh{f\kh{x_{t + \frac{1}{2}}} - f_{\lb}}^+}$\;

$x_{t + 1} \leftarrow x_{t + \frac{1}{2}} - \sigma_t\xi_t$\
with $\xi_t \in \mathcal{N}\kh{0, \dfrac{1}{d}I_d}$\;
}
$t_* \leftarrow \min\left\{ \argmin\limits_{0 \le t \le T} f\kh{x_{t}}\right\}$.
\end{algorithm}

GND algorithm can be viewed as a variant of SGD. The stochastic oracle $\SG(x)$ in \Cref{alg:gngd} is a random variable with mean being the gradient $\nabla f(x)$, usually used to describe the general framework of SGD. For example, consider the expected risk $f(x) = \expect{z}{F(x, z)}$, we may denote $\SG(x) = \nabla_x F(x, z)$, where $z$ is a random variable drawn from a certain distribution.
In the convergence analysis, we make the following assumption on $\SG(x)$.

\begin{assumption}\label{asp:sg}
There exists a constant $r \ge 0$ such that the stochastic gradient oracle $\SG(x)$ satisfies
$$
 \ex{\SG(x)} = \nabla f(x)\quad\mbox{and}\quad \ex{\norm{\SG(x) - \nabla f(x)}^2} \le r^2.
$$
\end{assumption}

The key distinction between GND and standard SGD lies in the incorporation of an additional Gaussian noise term with adaptive variance following each SGD update. Specifically, the adaptive variance in \Cref{alg:gngd} is defined as $\sigma_t^2 = \eta s \brc{f(x_{t + \frac{1}{2}}) - f_{\lb}}^+$, where $f_{\lb}$ denotes an estimated lower bound of $f$, and $s > 0$ is a hyperparameter controlling the noise intensity. The guiding principle of \Cref{alg:gngd} is to introduce stronger noise when the current iterate is far from the global minimum and to reduce the noise level as the iterate approaches optimality. The quantity $\sqrt{\brc{f(x_{t + \frac{1}{2}}) - f_{\lb}}^+}$ serves as a heuristic measure of the distance to the global minimum. 

\begin{remark}\label{rmk:f_lb}
Note that $f_{\lb}$ is the input of \Cref{alg:gngd}, and we assume that $f_{\lb} \le f^*$ in the analysis of \Cref{alg:gngd}. This means that $\brc{f(x_{t + \frac{1}{2}}) - f_{\lb}}^+= f(x_{t + \frac{1}{2}}) - f_{\lb}$ for all $t$.
\end{remark}

We now present the global convergence analyses of Algorithm \ref{alg:gngd} for the case that $f$ is nearly convex and Assumption \ref{asp:sg} holds.
Specifically, we assume the objective function $f:\real^d\to\real$ in \eqref{eq:prob} is $(\alpha, L)$-nearly convex, and we adopt a step size $\eta < \frac{2\alpha}{L^2}$ in Algorithm \ref{alg:gngd}. For convenience, we define two constants $\lambda$ and $b$, which will be used throughout the analysis:
\begin{numcases}{}
\label{def:lambda}
\lambda := 2\alpha - \eta L^2 > 0,
\\
b := \frac{\eta r^2}{2\alpha - \eta L^2} + \frac{5\eta(2\alpha - \eta L^2) + 14}{42L}(f^* - f_{\lb}),
\label{def:b}
\end{numcases}
where $f^* := f(x^*)$ is the minimum value of $f$, $f_{\lb} \le f^*$ is a lower bound of $f$, and $r>0$ is the constant such that \Cref{asp:sg} holds.
For ease of presenting the most essential conclusion of our results, 
we take $\eta = \frac{2\alpha}{5L^2}, s = \frac{\lambda}{3L}$ in the convergence theorem, and one can obtain similar results by using similar analysis (see \cref{rmk:eta} for more details).

\begin{theorem}\label{thm:main}
Suppose that the objective function $f:\real^d\to\real$ in \eqref{eq:prob} is $(\alpha, L)$-nearly convex and that \Cref{asp:sg} holds. Define $\eta = \frac{2\alpha}{5L^2}, s = \frac{\lambda}{3L}$, and let $f_{\lb} \le f^*$ be any valid lower bound of $f$. Let $\lambda, b\ge 0$ be the constant defined by \eqref{def:lambda} and \eqref{def:b}. Let $\{x_t\}$ be the sequence generated by {\rm {\Cref{alg:gngd}}} with the specified $\eta, s, f_{\lb}$.
Define
$$
y_t: = x_t - \eta \nabla f(x_t),\quad t = 0, 1, \cdots, T.
$$
Then we have:
\begin{equation*}
    \ex{\norm{y_t - x^*}^2} \le \brc{1 - \frac{\eta\lambda}{100}}^t\norm{y_0 - x^*}^2 + 100b, \quad\forall\, t \in \{0, 1, \cdots, T\}.
\end{equation*}
Moreover, the following conclusions hold. 
\begin{enumerate}
\item [\bf (a)]
If $b = 0$,  for any $\varepsilon > 0$ one has $\norm{y_T - x^*}^2 \le \varepsilon$ with probability at least $1 - \zeta$, provided that $\zeta\in(0,1)$ and
$$
 T \ge \frac{100}{\eta(2\alpha - \eta L^2)}\ln\frac{\norm{y_0 - x^*}^2}{\zeta\varepsilon}.
$$
\item [\bf (b)]
If $b > 0$, there exists a certain $t \le T$ satisfying $\norm{y_t - x^*}^2 \le 200b$
with probability at least $1 - \zeta$,
provided that $\zeta\in(0,1)$ and
$$
 T \ge \frac{200}{\eta(2\alpha - \eta L^2)}\ln \frac{\norm{y_0 - x^*}^2}{100b\zeta}.
$$
Moreover, by letting  $\beta:=\beta(f, x^*, \alpha)<\alpha$, 
one has with probability at least $1 - \zeta$ that
\begin{numcases}{}
\label{eq:coro.2}
f(x_{t_*}) - f^* \le \frac{100Lb}{(1 - \eta L)^2},
\\
\label{result:coro.11}
\norm{x_{t_*} - x^*}^2 \le \frac{200Lb}{(1 - \eta L)^2(\alpha - \beta)},
\\
\label{result:coro.2}
\norm{y_{t_*} - x^*}^2 \le \kh{\frac{1 + \eta L}{1 - \eta L}}^2\frac{200Lb}{\alpha - \beta}.
\end{numcases}
\end{enumerate}
\end{theorem}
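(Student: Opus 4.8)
The analysis revolves around the ``virtual iterate'' $y_t := x_t - \eta\nabla f(x_t)$ introduced in \Cref{motivation}. Writing $\mathcal F_t := \sigma(x_0,\ldots,x_t)$, the whole theorem reduces to the one-step contraction
\[
\ex{\norm{y_{t+1} - x^*}^2 \mid \mathcal F_t} \;\le\; \brc{1 - \tfrac{\eta\lambda}{100}}\norm{y_t - x^*}^2 + \eta\lambda b ,
\]
since iterating it and summing $\sum_{j\ge 0}(1-\eta\lambda/100)^j = 100/(\eta\lambda)$ gives exactly the displayed bound $\ex{\norm{y_t-x^*}^2}\le(1-\eta\lambda/100)^t\norm{y_0-x^*}^2 + 100b$.

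To prove the one-step bound I would condition in turn on the two fresh sources of randomness at step $t$. With $\omega_t := \SG(x_t) - \nabla f(x_t)$, the half-step is $z := x_{t+\frac12} = y_t - \eta\omega_t$ and $x_{t+1} = z - \sigma_t\xi_t$, where $\sigma_t^2 = \eta s(f(z)-f_{\lb})$ (the positive part is inactive as $f_{\lb}\le f^*$, see \Cref{rmk:f_lb}). Fixing $z$ first and setting $u := z - \sigma_t\xi_t \sim \mathcal N(z,\tfrac{\sigma_t^2}{d}I_d)$ so that $y_{t+1} = u - \eta\nabla f(u)$, expanding the square and using $\ex{\norm{u-x^*}^2\mid z}=\norm{z-x^*}^2+\sigma_t^2$ together with the growth bound \eqref{eq:asp.L} for the term $\eta^2\ex{\norm{\nabla f(u)}^2\mid z}$, the only delicate quantity is $\ex{\langle\nabla f(u),u-x^*\rangle\mid z}$. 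Here I would use the decomposition $f=\bar f_n+e_n$ with $\bar f_n\in\mathcal P_\alpha(x^*,f^*)$ and $\abs{e_n(x)}\le(\tfrac\beta2+\tfrac1n)\norm{x-x^*}^2$ (\Cref{def:beta} and the lemma after it): the one-point strong convexity of $\bar f_n$ (\Cref{def:one_point}) gives $\langle\nabla\bar f_n(u),u-x^*\rangle\ge\alpha\norm{u-x^*}^2$ pointwise and hence $\alpha(\norm{z-x^*}^2+\sigma_t^2)$ in expectation, while the remaining term $\ex{\langle\nabla e_n(u),u-x^*\rangle\mid z}$ has no pointwise control on $\nabla e_n$. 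The key device is to remove the gradient by Gaussian integration by parts (Stein's identity $\ex{\nabla g(u)}=\tfrac d{\sigma_t^2}\ex{(u-z)g(u)}$ and its divergence counterpart), which turns this term into $d\,\ex{e_n(u)(\norm{\xi_t}^2-1)}-\tfrac d{\sigma_t}\ex{e_n(u)\langle z-x^*,\xi_t\rangle}$, an expression involving $e_n$ but not $\nabla e_n$. Bounding it with $\abs{e_n}\le(\tfrac\beta2+\tfrac1n)\norm{\cdot-x^*}^2$, Cauchy--Schwarz, the identities $\mathrm{Var}(\norm{\xi_t}^2)=2/d$ and $\ex{\langle z-x^*,\xi_t\rangle^2}=\norm{z-x^*}^2/d$, a fourth-moment estimate $\ex{\norm{u-x^*}^4}=O((\norm{z-x^*}^2+\sigma_t^2)^2)$, and letting $n\to\infty$, produces a bound of order $\sqrt d\,\beta\,(\norm{z-x^*}^2+\sigma_t^2)(1+\norm{z-x^*}/\sigma_t)$; the factors $\sqrt d$ are cancelled precisely by the $d^{-1/2}$ inside \eqref{eq:asseq}, and $\norm{z-x^*}/\sigma_t\le\sqrt{2/(\eta s(\alpha-\beta))}$ by the lower bound in \eqref{eq:lm.nearly_convex}.

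Putting these pieces together, then taking expectation over $\omega_t$ — which contributes $\ex{\norm{z-x^*}^2\mid x_t}\le\norm{y_t-x^*}^2+\eta^2r^2$ by \Cref{asp:sg} — and bounding $\sigma_t^2\le\eta s(\tfrac L2\norm{z-x^*}^2+(f^*-f_{\lb}))$ using the upper bound in \eqref{eq:lm.nearly_convex}, a routine but delicate constant computation with $\eta=\tfrac{2\alpha}{5L^2}$, $s=\tfrac\lambda{3L}$ yields the one-step contraction with the stated $\lambda$ and $b$. Checking that the perturbation contributions are genuinely dominated — that is, that the quantitative bound \eqref{eq:asseq} is strong enough — is the main obstacle of the proof, and the crude factor $100$ is the slack that this bookkeeping leaves over.

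Finally I would read off conclusions (a) and (b). For (a), $b=0$ gives $\ex{\norm{y_T-x^*}^2}\le(1-\eta\lambda/100)^T\norm{y_0-x^*}^2$, and Markov's inequality with $-\ln(1-x)\ge x$ produces the stated iteration count. For (b), I would use the stopping time $\tau:=\min\{t:\norm{y_t-x^*}^2\le 200b\}$: on $\{t<\tau\}$ the inequality $\eta\lambda b\le\tfrac{\eta\lambda}{200}\norm{y_t-x^*}^2$ turns the one-step bound into $\ex{\norm{y_{t+1}-x^*}^2\mid\mathcal F_t}\le(1-\tfrac{\eta\lambda}{200})\norm{y_t-x^*}^2$ there, so $(1-\eta\lambda/200)^{-(t\wedge\tau)}\norm{y_{t\wedge\tau}-x^*}^2$ is a nonnegative supermartingale and optional stopping gives $\proba{\tau>T}\le(1-\eta\lambda/200)^T\norm{y_0-x^*}^2/(200b)$, which is $\le\zeta$ under the stated lower bound on $T$. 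On $\{\tau\le T\}$ one has $\norm{y_\tau-x^*}^2\le 200b$; since $\eta L=\tfrac{2\alpha}{5L}<1$, the relation $y_\tau=x_\tau-\eta\nabla f(x_\tau)$ with \eqref{eq:asp.L} gives $\norm{x_\tau-x^*}\le(1-\eta L)^{-1}\norm{y_\tau-x^*}$, whence $f(x_{t_*})-f^*\le f(x_\tau)-f^*\le\tfrac L2\norm{x_\tau-x^*}^2\le\tfrac{100Lb}{(1-\eta L)^2}$ by the definition of $t_*$ and the upper bound in \eqref{eq:lm.nearly_convex}; this is \eqref{eq:coro.2}, and \eqref{result:coro.11} and \eqref{result:coro.2} then follow from the lower bound in \eqref{eq:lm.nearly_convex} and from $\norm{y_{t_*}-x^*}\le(1+\eta L)\norm{x_{t_*}-x^*}$.
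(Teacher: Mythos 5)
Your proposal is correct and follows essentially the same route as the paper: the same virtual iterate $y_t$, the same decomposition $f=\bar f_n+e_n$ with the perturbation term tamed by Gaussian integration by parts exactly as in \Cref{lm:eps_bound_new}, the same sandwich on $\sigma_t$ via \eqref{eq:lm.nearly_convex}, and the same one-step contraction as in \Cref{lm:key}, followed by Markov's inequality for part (a). The only (immaterial) deviation is in part (b), where you obtain $\proba{\tau>T}\le\brc{1-\frac{\eta\lambda}{200}}^{T}\norm{y_0-x^*}^2/(200b)$ by an optional-stopping/supermartingale argument, whereas the paper packages the same stopping-time induction into the standalone \Cref{lm:st}; both give the stated iteration count, and the concluding bounds \eqref{eq:coro.2}--\eqref{result:coro.2} are derived identically.
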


The proof of \Cref{thm:main} are postponed to \Cref{subsec:proof.thm1}.
Here we make the following remarks on \Cref{thm:main} and its proof that will be given in \Cref{subsec:proof.thm1}.

\begin{remark}\label{rmk:eta}
As will be observed from \eqref{eq:expg.final} in \Cref{subsec:proof.thm1} (the proof of \Cref{thm:main}), the global linear convergence of \Cref{alg:gngd} still holds for $\eta < \frac{2\alpha}{L^2}$ ,
provided that a constant $s>0$ can be found such that $\beta = \beta(f, x^*, \alpha)$ satisfies the following inequality:
\begin{equation}
\label{eq:eta.constrain}
\beta \sqrt{2d} \kh{\sqrt{\frac{2}{\pi\eta s(\alpha - \beta)}} + 1}\kh{1 + \eta sL}
 \le 2\alpha - \eta L^2 - \frac{sL}{2}.
\end{equation}
In our proof, we typically set $\eta = \frac{2\alpha}{5L^2}$ and $s= \frac{\lambda}{3L}$. In addition, \eqref{eq:eta.constrain} implies that $\eta$ cannot be arbitrarily small. However, when $f(x)$ is one-point strongly convex, meaning $\beta = 0$, we can arbitrarily choose $\eta \in (0, \frac{2\alpha}{L^2})$ by letting $s = 0$.
\end{remark}

\begin{remark}
If the full gradient is used and $f_{\lb} = f^*$, \Cref{asp:sg} holds with $r=0$, which implies $b=0$.  
In this case, \Cref{thm:main} shows that the sequence $\{y_t\}$ converges linearly to $x^*$. If $r>0$ and $f_{\lb} = f^*$, \Cref{thm:main} shows that $\{y_t\}$ converges linearly to a neighborhood of $x^*$, and the size of the neighborhood depends on the variance $r$ of the stochastic oracle. More specifically, if $f$ is one-point strongly convex at $x^*$, we can take $\eta=\epsilon$. Thus, with high probability, $\{y_t\}$ achieves a neighborhood $\mathcal{O}(\epsilon)$ of $x^*$ within $T = \mathcal{O}\kh{\frac{1}{\epsilon}\ln\frac{1}{\epsilon}}$ iterations.
\end{remark}

Note that, when implementing \Cref{alg:gngd}, it is generally impossible to evaluate $f^*$ as a sharp lower bound of $f$, but estimating an alternative lower bound can be much easier.
For example, many loss functions used in machine learning usually admit $0$ as a trivial lower bound.
Based on such an observation, we propose a double-loop version of \Cref{alg:gngd}. For convenience, we abbreviate Algorithm \ref{alg:gngd} to 
$$
x_{t_*} \leftarrow \GND(\SG, x_0, \eta, s, f_{\lb}, T),
$$ 
where the inputs can be viewed as the variables of a function named GND, and the unique output can be viewed as the function value. 
The double-loop variant of \Cref{alg:gngd} is given as follows.

\begin{algorithm}
 \caption{A double-loop Gaussian noise descent (DL-GND) algorithm for solving \eqref{eq:prob}}
 \label{alg:dlgnd}
 \small
 \KwIn{
The initial point $x_0 \in \real^d$,
the stochastic gradient oracle $\SG(x)$,
the step size $\eta>0$,
the noise factor $s>0$,
a lower bound $f_{\lb}^0$ of $f$, 
a combination coefficient $\gamma \in (0, 1)$, 
a number $N$ of the outer loops, 
and the two numbers $T_1$ and $T_2$ for the total iterations in the first and second stages, respectively.}
\KwOut{$x_{\min}^N$.}
$x_{\min}^0 \leftarrow \GND(\SG, x_0, \eta, s, f_{\lb}^0, T_1)$ by implementing \Cref{alg:gngd}; 

 \For{$\nu = 0$ to $\nu = N - 1$}
 {
 $f_{\lb}^{\nu + 1} \leftarrow (1 - \gamma)f_{\lb}^\nu + \gamma f\kh{x_{\min}^\nu}$\;

 $x_{\min}^{\nu + 1} \leftarrow \GND(\SG, x_{\min}^\nu, \eta, s, f_{\lb}^{\nu + 1}, T_2)$ by implementing \Cref{alg:gngd}. 
 }
\end{algorithm}

In \Cref{alg:dlgnd}, the outer loop primarily updates the lower bound estimate of the objective function, while the inner loop follows the procedure of \Cref{alg:gngd}. Moreover, we establish the following result regarding the convergence properties of \Cref{alg:dlgnd}.

\begin{theorem}\label{thm:double_loop}
Suppose that the objective function $f:\real^d\to\real$ in \eqref{eq:prob} is $(\alpha, L)$-nearly convex and \Cref{asp:sg} holds.
Let $\eta = \frac{2\alpha}{5L^2}, s = \frac{\lambda}{3L}$ and assume that $f_{\lb}^0 < f^*$. 
For any $\varepsilon > 0$ and  $\zeta \in (0, 1)$, define
\begin{equation}
\label{eq:defgamma}
\begin{array}{ll}
b^0 := \frac{\eta r^2}{\lambda} + \frac{5\eta\lambda + 14}{42L}(f^* - f_{\lb}^0),\quad
b^\varepsilon: = \frac{\eta r^2}{\lambda} + \frac{5\eta\lambda + 14}{42L}  \varepsilon,
\quad\mbox{and}\quad
\gamma: = \frac{(1 - \eta L)^2\varepsilon}{(1 - \eta L)^2\varepsilon + 100Lb^\varepsilon}.
\end{array}
\end{equation} 
where $\lambda$ is defined in \eqref{def:lambda}. Denote $\beta = \beta(f, x^*, \alpha)$.
Suppose that the positive integers $N$, $T_1$, and $T_2$ in \Cref{alg:dlgnd} are chosen such that
\begin{equation}
\label{eq:t1t2}
\begin{array}{ll}
N \ge \frac{1}{\gamma}\ln\kh{(1 - \gamma) + \gamma\cdot\frac{f^* - f_{\lb}^0}{\varepsilon}}, 
\quad 
T_1 \ge \frac{200}{\eta\lambda}\ln \frac{\norm{y_0 - x^*}^2}{100b^0\zeta'},
\quad \mbox{and}\quad 
T_2 \ge \frac{200}{\eta\lambda} \ln \frac{2(1 + \eta L)^2Lb^0}{(1 - \eta L)^2(\alpha - \beta)b^\varepsilon\zeta'} 
\end{array}
\end{equation}
with $\zeta':= \frac{\zeta}{N+1}$.
Then, with probability at least $1 - \zeta$, one has that the output 
$x^N_{\min}$ of \Cref{alg:dlgnd} satisfies
\begin{equation}\label{eq:dlgnd.conv.bound}
    f\kh{x_{\min}^N} - f^* \le \frac{100Lb^\varepsilon}{(1 - \eta L)^2} 
    \quad \text{and}\quad
    \norm{x_{\min}^N - x^*}^2 \le \frac{200Lb^\varepsilon}{(1 - \eta L)^2(\alpha - \beta)}.
\end{equation}
\end{theorem}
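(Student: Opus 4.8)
The plan is to reduce the statement to $N+1$ applications of \Cref{thm:main}(b) — one for the preprocessing call $x_{\min}^0\leftarrow\GND(\dots,f_{\lb}^0,T_1)$ and one for each inner call $x_{\min}^{\nu+1}\leftarrow\GND(\dots,f_{\lb}^{\nu+1},T_2)$ — while tracking the lower-bound gap $\delta^\nu:=f^*-f_{\lb}^\nu$. I would write $A:=\frac{100L}{(1-\eta L)^2}$, $c:=\frac{5\eta\lambda+14}{42L}$, $b_{\min}:=\frac{\eta r^2}{\lambda}$ and $b(\delta):=b_{\min}+c\,\delta$, so that $b^0=b(\delta^0)$, $b^\varepsilon=b(\varepsilon)$, and the definition of $\gamma$ in \eqref{eq:defgamma} is precisely the identity $A\,b^\varepsilon=\frac{1-\gamma}{\gamma}\,\varepsilon$; one also checks $\gamma\le\frac{1}{1+Ac}$, and I would use the elementary bound $\gamma\le-\ln(1-\gamma)$ valid for $\gamma\in(0,1)$.

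Two deterministic facts will drive the proof. First, \emph{monotonicity}: the output $x_{t_*}$ of any $\GND$ call satisfies $f(x_{t_*})\le f(x_0)$, since its initial point $x_0$ is among the candidates in the $\argmin$ defining $t_*$; as each inner call starts from the previous output, the errors $e_\nu:=f(x_{\min}^\nu)-f^*$ are non-increasing in $\nu$. Second, the \emph{lower-bound recursion} $f_{\lb}^{\nu+1}=(1-\gamma)f_{\lb}^\nu+\gamma f(x_{\min}^\nu)$ rearranges to $\delta^{\nu+1}=(1-\gamma)\delta^\nu-\gamma e_\nu$, hence in particular $\delta^{\nu+1}\le(1-\gamma)\delta^\nu$.

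Set $\zeta':=\zeta/(N+1)$ and let $\mathcal{G}$ be the intersection of the $N+1$ events ``the output of call $\nu$ satisfies \eqref{eq:coro.2}, \eqref{result:coro.11}, \eqref{result:coro.2} of \Cref{thm:main}(b) with constant $b=b(\delta^\nu)$''; each holds with probability at least $1-\zeta'$ once that call has a valid lower bound ($f_{\lb}^\nu\le f^*$) and an iteration budget meeting the threshold in \Cref{thm:main}(b), so a union bound gives $\mathbb{P}(\mathcal{G})\ge1-\zeta$ (the verification that these hypotheses hold is itself part of the induction below, and is made rigorous by the usual first-failure-time argument). Working on $\mathcal{G}$, I would argue by contradiction: suppose $e_N>A\,b^\varepsilon$, so by monotonicity $e_\nu>A\,b^\varepsilon$ for every $\nu\le N$. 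This forces $f_{\lb}^\nu\le f^*$ and $\delta^\nu>\varepsilon$ for all such $\nu$, by induction on $\nu$: the base case is $f_{\lb}^0<f^*$; and given $f_{\lb}^\nu\le f^*$, \Cref{thm:main}(b) applied to the call with lower bound $f_{\lb}^\nu$ gives $e_\nu\le A\,b(\delta^\nu)$, so $A\,b(\delta^\nu)\ge e_\nu>A\,b^\varepsilon$ yields $\delta^\nu>\varepsilon$, and then substituting $e_\nu\le A(b_{\min}+c\delta^\nu)$ into the recursion and using $\delta^\nu>\varepsilon$, $\gamma\le\frac1{1+Ac}$, and the identity for $\gamma$ gives $\delta^{\nu+1}\ge0$, i.e. $f_{\lb}^{\nu+1}\le f^*$. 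The same induction gives $\delta^\nu\le\delta^0$ throughout, so by \eqref{result:coro.2} the starting error of every inner call is at most $\frac{2(1+\eta L)^2L}{\alpha-\beta}b^0$; together with $b(\delta^\nu)\ge b^\varepsilon$ this shows the uniform $T_1,T_2$ prescribed in \eqref{eq:t1t2} dominate the (iterate-dependent) thresholds of \Cref{thm:main}(b) for all $N+1$ calls. Finally, feeding the \emph{lower} bound $e_\nu>A\,b^\varepsilon=\frac{1-\gamma}{\gamma}\varepsilon$ back into the recursion gives $\delta^{\nu+1}<(1-\gamma)(\delta^\nu-\varepsilon)$ for $\nu=0,\dots,N-1$; iterating this and invoking $N\ge\frac1\gamma\ln\bigl((1-\gamma)+\gamma\tfrac{\delta^0}{\varepsilon}\bigr)\ge\frac{1}{-\ln(1-\gamma)}\ln\bigl((1-\gamma)+\gamma\tfrac{\delta^0}{\varepsilon}\bigr)$ forces $\delta^N\le\varepsilon$, contradicting $\delta^N>\varepsilon$. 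Hence $e_N\le A\,b^\varepsilon$, which is the first bound in \eqref{eq:dlgnd.conv.bound}; the second then follows from $\norm{x_{\min}^N-x^*}^2\le\frac{2e_N}{\alpha-\beta}$, i.e. \Cref{lm:nearly_convex}(b).

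The step I expect to be the main obstacle is the inductive argument just sketched. The calibration of $\gamma$ sits exactly on the boundary: $\varepsilon$ is the break-even level at which the convex combination producing $f_{\lb}^{\nu+1}$ does not overshoot $f^*$, so the implication ``$\delta^\nu>\varepsilon\Rightarrow\delta^{\nu+1}\ge0$'' rests squarely on the identity $A\,b^\varepsilon=\frac{1-\gamma}{\gamma}\varepsilon$ (plus $\gamma\le\frac1{1+Ac}$), and a looser $\gamma$ would break it. Equally delicate is showing that one fixed pair $(T_1,T_2)$ suffices for every one of the $N+1$ calls, since each call's threshold in \Cref{thm:main}(b) involves both the random quantity $\norm{y_{\min}^{\nu-1}-x^*}$ and the constant $b(\delta^\nu)$; this is where the two bounds $\delta^{\nu-1}\le\delta^0$ and $\delta^\nu\ge\varepsilon$ — both available precisely under the contradiction hypothesis — are used. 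Everything else is bookkeeping with the explicit constants $\lambda$, $b^0$, $b^\varepsilon$, $\gamma$, $T_1$, $T_2$.
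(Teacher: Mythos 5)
Your proposal is correct and follows essentially the same route as the paper's proof of \Cref{thm:double_loop}: the same reduction to $N+1$ applications of \Cref{thm:main}(b) with $\zeta'=\zeta/(N+1)$, the same use of the monotonicity of $f(x_{\min}^\nu)$ and of the recursion $f^*-f_{\lb}^{\nu+1}=(1-\gamma)(f^*-f_{\lb}^{\nu})-\gamma(f(x_{\min}^\nu)-f^*)$ together with the calibration identity $\frac{100Lb^\varepsilon}{(1-\eta L)^2}=\frac{1-\gamma}{\gamma}\varepsilon$ to keep $f_{\lb}^\nu\le f^*$ and to bound the per-call thresholds uniformly, and the same terminal $(1-\gamma)^{-N}$ contradiction with the choice of $N$. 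The ``first-failure-time'' bookkeeping you defer is exactly what the paper implements via the nested events $\mathfrak{E}_\nu$, $\mathfrak{A}_\nu$ and the modified failure events $\mathfrak{B}_\nu'$ (which target $200\max\{b^\nu,b^\varepsilon\}$ so that the fixed $T_2$ still suffices when $b^\nu<b^\varepsilon$), so your sketch matches the paper's argument in substance.
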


The proof of \Cref{thm:double_loop} will be given in \Cref{subsec:proof.thm2}.
Here we make some remarks on \Cref{thm:double_loop}.

\begin{remark}
In \Cref{alg:dlgnd}, it is possible that $f_{\lb}^\nu > f^*$ occurs at some iteration $\nu$. In such cases, for any $i \ge \nu$, the behavior of the iterates is no longer covered by the guarantees of \Cref{thm:main} (although this does not invalidate the algorithm, as discussed in \cref{rmk:f_lb}). Nevertheless, by the definition of $x_{\min}^\nu$, we observe that the sequence $\{f(x_{\min}^\nu)\}$ is non-increasing. Therefore, even if $f_{\lb}^\nu > f^*$ occurs at some iteration, the final value $f(x_{\min}^N)$ will not increase thereafter. In particular, we analyze the event that $f(x_{\min}^\nu) - f^*$ falls below a prescribed threshold before the condition $f_{\lb}^\nu > f^*$ is triggered, and we show that this desirable event occurs with probability at least $1 - \zeta$ (see \Cref{subsec:proof.thm2} for details).

\end{remark}

\begin{remark}
The total number of iterations required in \Cref{thm:double_loop} to achieve the convergence bound in \eqref{eq:dlgnd.conv.bound} is estimated as 
\begin{equation*}
T_1 + NT_2 = \begin{cases}
\mathcal{O}\kh{\ln^2 \frac{1}{\varepsilon}} &\mbox{ if }r = 0,\\
\mathcal{O}\kh{\frac{1}{\varepsilon}\ln \frac{1}{\varepsilon}} &\mbox{ if }r > 0.
\end{cases}
\end{equation*}
Therefore, when employing the full gradient ($r = 0$) in \Cref{alg:dlgnd}, which can be interpreted as a noise-perturbed gradient descent method, the algorithm is guaranteed to find a point within an $\varepsilon$-neighborhood of the global minimizer $x^*$ in at most $\mathcal{O}\kh{\ln^2 \frac{1}{\varepsilon}}$ steps with high probability.
\end{remark}

\section{Proofs of the main results}
\label{sec:convergence_proof}
This section first presents preliminary results related to the probability theory and stochastic process and then provides the proofs for \Cref{thm:main,thm:double_loop}.

\subsection{Preliminaries}

Recall that the Gamma function on positive real numbers is defined by
$$
\Gamma(z):=\int_0^\infty t^{z-1}e^t\dif t,\quad 0<z\in\real. 
$$
The following Lemma is a basic result of probability theory.
\begin{lemma}
\label{lem:intall}
Let $\xi \sim \field{N}(0, \frac{1}{d} I_d)$ be a random vector in $\real^d$ and $\Gamma(\cdot)$ be the Gamma function.
One has that
\begin{equation}
\label{eq:int.1234}
\begin{array}{ll}
\ex{\norm{\xi}} = \frac{\sqrt{2}}{\sqrt{d}}\frac{\Gamma\kh{\frac{d + 1}{2}}}{\Gamma\kh{\frac{d}{2}}},\quad
\ex{\norm{\xi}^2} = 1,\quad
\ex{\norm{\xi}^3} = \frac{\sqrt{2}}{\sqrt{d}}\frac{\Gamma\kh{\frac{d + 1}{2}}}{\Gamma\kh{\frac{d}{2}}}\kh{1 + \frac{1}{d}}
\ \,
\mbox{and}
\quad 
\ex{\norm{\xi}^4} = 1 + \frac{2}{d}.
\end{array}
\end{equation}
\end{lemma}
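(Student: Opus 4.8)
The plan is to reduce everything to the chi distribution. Writing $\xi = \frac{1}{\sqrt d}\, g$ with $g \sim \mathcal N(0, I_d)$, we have $\|\xi\| = \frac{1}{\sqrt d}\|g\|$, so $\ex{\|\xi\|^k} = d^{-k/2}\,\ex{\|g\|^k}$. The quantity $\|g\|$ follows a chi distribution with $d$ degrees of freedom, whose moments are classical:
\[
\ex{\|g\|^k} = 2^{k/2}\,\frac{\Gamma\!\kh{\frac{d+k}{2}}}{\Gamma\!\kh{\frac{d}{2}}}.
\]
I would derive this moment formula directly: $\ex{\|g\|^k}$ is computed by integrating $\|y\|^k$ against the density $(2\pi)^{-d/2}e^{-\|y\|^2/2}$, passing to polar coordinates so the angular part cancels against the normalization, leaving $\ex{\|g\|^k} = \frac{1}{\Gamma(d/2)}\int_0^\infty r^{d+k-1} 2^{-?}\ldots$; after the substitution $u = r^2/2$ this becomes $2^{k/2}\Gamma\!\kh{\frac{d+k}{2}}/\Gamma\!\kh{\frac d2}$. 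Alternatively one may simply cite the chi-distribution moments as a standard fact.

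With this formula in hand, each of the four identities is a direct specialization. For $k=2$: $\ex{\|\xi\|^2} = d^{-1}\cdot 2\,\Gamma\!\kh{\frac d2+1}/\Gamma\!\kh{\frac d2} = d^{-1}\cdot 2\cdot \frac d2 = 1$, using $\Gamma(z+1) = z\Gamma(z)$. For $k=4$: $\ex{\|\xi\|^4} = d^{-2}\cdot 4\,\Gamma\!\kh{\frac d2+2}/\Gamma\!\kh{\frac d2} = d^{-2}\cdot 4\cdot \frac d2\kh{\frac d2+1} = d^{-2}(d^2+2d) = 1 + \frac 2d$. For $k=1$: $\ex{\|\xi\|} = d^{-1/2}\cdot \sqrt 2\,\Gamma\!\kh{\frac{d+1}{2}}/\Gamma\!\kh{\frac d2} = \frac{\sqrt 2}{\sqrt d}\,\frac{\Gamma\kh{\frac{d+1}{2}}}{\Gamma\kh{\frac d2}}$, matching the claim. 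For $k=3$: $\ex{\|\xi\|^3} = d^{-3/2}\cdot 2\sqrt 2\,\Gamma\!\kh{\frac{d+3}{2}}/\Gamma\!\kh{\frac d2}$, and since $\Gamma\!\kh{\frac{d+3}{2}} = \frac{d+1}{2}\,\Gamma\!\kh{\frac{d+1}{2}}$, this equals $d^{-3/2}\cdot 2\sqrt 2\cdot \frac{d+1}{2}\,\Gamma\!\kh{\frac{d+1}{2}}/\Gamma\!\kh{\frac d2} = \frac{\sqrt 2}{\sqrt d}\,\frac{\Gamma\kh{\frac{d+1}{2}}}{\Gamma\kh{\frac d2}}\cdot\frac{d+1}{d} = \frac{\sqrt 2}{\sqrt d}\,\frac{\Gamma\kh{\frac{d+1}{2}}}{\Gamma\kh{\frac d2}}\kh{1+\frac 1d}$, as stated.

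There is no real obstacle here; the only thing to be careful about is bookkeeping with the Gamma recursion $\Gamma(z+1)=z\Gamma(z)$ and the factors of $\sqrt 2$ and $\sqrt d$. If one wants to avoid invoking the chi distribution, the cleanest self-contained route is the polar-coordinate integral together with the identity $\mathrm{vol}(\mathbb S^{d-1}) = 2\pi^{d/2}/\Gamma(d/2)$, which makes the normalization constant cancel cleanly; I would present it that way so the lemma is fully elementary. The even moments ($k=2,4$) can also be checked coordinate-wise using $\ex{g_i^2}=1$, $\ex{g_i^4}=3$ as an independent sanity check: $\ex{\|g\|^4} = \sum_{i,j}\ex{g_i^2g_j^2} = d\cdot 3 + d(d-1)\cdot 1 = d^2+2d$, confirming $\ex{\|\xi\|^4} = 1+\frac 2d$.
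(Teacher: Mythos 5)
Your proposal is correct and follows essentially the same route as the paper: the paper's proof simply observes that $\|\sqrt{d}\xi\|$ is $\chi$-distributed with $d$ degrees of freedom and reads off the raw moments, which is exactly your reduction via $\xi = \frac{1}{\sqrt d}g$ and the moment formula $\ex{\|g\|^k} = 2^{k/2}\Gamma\kh{\frac{d+k}{2}}/\Gamma\kh{\frac d2}$. Your specializations for $k=1,2,3,4$ are all computed correctly, so the only difference is that you spell out details the paper leaves to the reader.
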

\begin{proof}
Note that $\|\sqrt{d}\xi\|$ is a random variable following a $\chi$-distribution with $d$ degrees of freedom. One can readily get \eqref{eq:int.1234} by examining the raw moments of the $\chi$-distribution.
\end{proof}

Based on the above Lemma, we have the following preliminary result, whose proof is given in \Cref{sec:proof1}.

\begin{lemma}
\label{lm:eps_bound_new}
Let $\varepsilon:\real^d\to\real$  be a continuously differentiable function such
that $\abs{\varepsilon(x)} \le \frac{l }{2} \norm{x - x^*}^2$ with $l >0$ and $x^*\in\real^d$.
Given any $x \in \real^d$ and $\delta > 0$, one has for $\xi \sim \field{N}(0, \frac{1}{d} I_d)$ that
\begin{numcases}{}
\label{eq:eps_bound.1}
\abs{\expect{\xi}{\langle \nabla \varepsilon(x - \delta\xi), x - x^* \rangle}} \le l  \sqrt{\frac{d}{2}}\frac{\norm{x - x^*}}{\delta\sqrt{\pi}}\kh{\norm{x - x^*}^2 + \delta^2\kh{1 + \frac{1}{d}}},
\\[1mm]
\label{eq:eps_bound.2}
 \abs{\expect{\xi}{\langle \nabla \varepsilon(x - \delta\xi), \delta\xi \rangle}} \le l  \sqrt{\frac{d}{2}}\kh{\norm{x - x^*}^2 + \delta^2\kh{1 + \frac{1}{d}}},
\\[1mm]
\label{result:eps_bound}
 \abs{\expect{\xi}{\langle \nabla \varepsilon(x - \delta\xi), x - \delta\xi - x^* \rangle}}
 \le l  \sqrt{\frac{d}{2}} \kh{\frac{\norm{x - x^*}}{\delta\sqrt{\pi}} + 1}
 \kh{\norm{x - x^*}^2 +  \delta^2\kh{1 + \frac{1}{d}}}.
\end{numcases}
\end{lemma}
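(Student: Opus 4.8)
The plan is to trade the uncontrolled object $\nabla\varepsilon$ for $\varepsilon$ itself — on which we do have the pointwise bound $\abs{\varepsilon(\cdot)}\le\frac l2\norm{\cdot-x^*}^2$ — via Gaussian integration by parts, and then reduce everything to the elementary Gaussian moments recorded in \Cref{lem:intall}.

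\emph{Step 1: Stein-type identities.} I would set $h(\xi):=\varepsilon(x-\delta\xi)$, so that $\nabla_\xi h(\xi)=-\delta\,\nabla\varepsilon(x-\delta\xi)$, and use that the density $p$ of $\field{N}(0,\tfrac1d I_d)$ satisfies $\nabla p=-d\,\xi\,p$. Integrating by parts (the quadratic bound on $\varepsilon$ makes all the integrals below absolutely convergent and kills the boundary terms) gives, for every fixed $v\in\real^d$,
\begin{equation*}
\expect{\xi}{\lrangle{\nabla\varepsilon(x-\delta\xi),v}}=-\frac d\delta\,\expect{\xi}{\varepsilon(x-\delta\xi)\lrangle{\xi,v}},
\end{equation*}
and, applying the one-dimensional Stein identity coordinatewise against $\xi$ itself,
\begin{equation*}
\expect{\xi}{\lrangle{\nabla\varepsilon(x-\delta\xi),\delta\xi}}=-d\,\expect{\xi}{\varepsilon(x-\delta\xi)\brc{\norm{\xi}^2-1}}.
\end{equation*}
Equivalently, and more cleanly, one can differentiate the Gaussian mollification $G(x):=\expect{\xi}{\varepsilon(x-\delta\xi)}$ by moving the gradient onto the Schwartz kernel, which produces the same two identities without any hypothesis on $\nabla\varepsilon$.

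\emph{Step 2: the first two bounds.} Next I would substitute $\abs{\varepsilon(x-\delta\xi)}\le\frac l2\norm{(x-x^*)-\delta\xi}^2$ into the two identities and abbreviate $u:=x-x^*$. Expanding $\norm{u-\delta\xi}^2=\norm{u}^2-2\delta\lrangle{u,\xi}+\delta^2\norm{\xi}^2$, the cross term is odd under $\xi\mapsto-\xi$ and integrates to zero in both cases, so it remains to estimate $\norm{u}^2\expect{\xi}{\abs{\lrangle{\xi,u}}}+\delta^2\expect{\xi}{\norm{\xi}^2\abs{\lrangle{\xi,u}}}$ for \eqref{eq:eps_bound.1}, and $\norm{u}^2\expect{\xi}{\abs{\norm{\xi}^2-1}}+\delta^2\expect{\xi}{\norm{\xi}^2\abs{\norm{\xi}^2-1}}$ for \eqref{eq:eps_bound.2}. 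For the former, split $\xi$ into its component along $u$ and its orthogonal part; one-dimensional Gaussian absolute moments together with $\expect{\xi}{\norm{\xi}^2}=1$ give $\expect{\xi}{\abs{\lrangle{\xi,u}}}=\norm{u}\sqrt{2/(\pi d)}$ and $\expect{\xi}{\norm{\xi}^2\abs{\lrangle{\xi,u}}}=\norm{u}\sqrt{2/(\pi d)}\brc{1+\tfrac1d}$. For the latter, use $\expect{\xi}{\abs{\norm{\xi}^2-1}}\le\sqrt{\expect{\xi}{(\norm{\xi}^2-1)^2}}=\sqrt{2/d}$ and, by Cauchy--Schwarz, $\expect{\xi}{\norm{\xi}^2\abs{\norm{\xi}^2-1}}\le\sqrt{\expect{\xi}{\norm{\xi}^4}}\cdot\sqrt{2/d}=\sqrt{1+2/d}\cdot\sqrt{2/d}\le\brc{1+\tfrac1d}\sqrt{2/d}$, where $\expect{\xi}{\norm{\xi}^4}=1+\tfrac2d$ is taken from \Cref{lem:intall}. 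Multiplying by the prefactors $\frac d\delta\cdot\frac l2$ and $d\cdot\frac l2$, respectively, yields exactly \eqref{eq:eps_bound.1} and \eqref{eq:eps_bound.2}.

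\emph{Step 3: the third bound, and the main obstacle.} Inequality \eqref{result:eps_bound} then comes for free: since $x-\delta\xi-x^*=(x-x^*)-\delta\xi$, linearity and the triangle inequality bound its left-hand side by the sum of the left-hand sides of \eqref{eq:eps_bound.1} and \eqref{eq:eps_bound.2}, and the two corresponding right-hand sides add up precisely to the right-hand side of \eqref{result:eps_bound}. The only genuinely delicate point will be Step 1: the hypotheses constrain $\varepsilon$ but not $\nabla\varepsilon$, so one cannot integrate by parts naively; routing through the mollification $G$ (or a standard truncation argument, noting that in all the applications $\nabla\varepsilon$ has at most linear growth so that differentiation under the integral is immediate) is what makes the identities rigorous. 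Everything after that is routine bookkeeping with Gaussian moments.
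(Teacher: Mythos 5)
Your proposal is correct and follows essentially the same route as the paper's proof: Gaussian integration by parts (the Stein identities you write are exactly the paper's coordinatewise and divergence-form identities $\expect{\xi}{\partial_{\xi_1}\tilde{\vartheta}}=d\int\tilde{\vartheta}\rho\,\xi_1$ and $\int\tilde{\vartheta}\,\divi(\rho\xi)$), followed by the quadratic bound on $\varepsilon$, vanishing of the odd cross terms, and the same Gaussian moments from \Cref{lem:intall}, with \eqref{result:eps_bound} obtained by summing the first two bounds. Your remark on justifying the integration by parts is a point the paper glosses over, so if anything your write-up is slightly more careful on that technicality.
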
 

Note that \Cref{lm:eps_bound_new} contains the specific results of the convolution effect discussed in \Cref{motivation}.
The following lemma on stochastic processes with its proof given in \Cref{sec:proof2} will be used in the convergence analysis of the algorithm proposed in the next section.

\begin{lemma}\label{lm:st}
Let $(\Omega, {\mathcal F}, {\mathbb P})$ be a probability space, and
$\{X_t\}_{t\in {\mathds N}}$ be a stochastic process such that each $X_t$ is real-valued and integrable.
Denote $(\mathcal F_t)_{t \in {\mathds N}}$ be the generated filtration of $\{X_t\}$ so that each $X_t$ is measurable with respect to the $\sigma$-field $\field{F}_t$.
Suppose that there are constants $\theta \in (0, 1)$ and $b > 0$ such that
$$
\ex{X_{t + 1} \mid \field{F}_t} \le \theta X_t
\quad\mbox{and}\quad X_t \ge -b\ \text{a.s.}.
$$
Then, for any given nonnegative integer $M$ and real number $\ell>0$,
\begin{equation}
\label{def:st.B}
\proba{\exists t \le M, X_t < \ell}\ge 1 - \kh{\frac{b + \theta\ell}{b + \ell}}^M\frac{B}{\ell}
\quad\mbox{with}\quad
B: = \ex{X_0 \mathds{1}\{X_0 \ge \ell\}} \ge 0.
\end{equation}
\end{lemma}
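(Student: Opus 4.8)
\textbf{Proof proposal for \Cref{lm:st}.}
The plan is to build a supermartingale from $\{X_t\}$ by shifting it and extracting a maximal-type inequality. First I would set $Y_t := X_t + b \ge 0$ a.s., so that the recursion $\ex{X_{t+1}\mid\field{F}_t}\le\theta X_t$ becomes $\ex{Y_{t+1}\mid\field{F}_t}\le\theta Y_t + (1-\theta)b \le Y_t$; hence $\{Y_t\}$ is a nonnegative supermartingale. The event ``$\exists t\le M,\ X_t<\ell$'' is exactly ``$\exists t\le M,\ Y_t < b+\ell$'', so its complement is ``$Y_t \ge b+\ell$ for all $t\le M$.'' The idea is to bound the probability of this complement by combining a sharpened one-step contraction with a stopping-time argument.

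Second, I would introduce the stopping time $\tau := \inf\{t : Y_t < b+\ell\}$ (with $\inf\emptyset = +\infty$) and consider the stopped, further-rescaled process. The key observation is that on the event $\{\tau > t\}$ we have $Y_t \ge b+\ell$, and on this set the recursion can be upgraded: $\ex{Y_{t+1}\mid\field{F}_t} \le \theta Y_t + (1-\theta)b = Y_t - (1-\theta)(Y_t - b) \le Y_t - (1-\theta)\ell = \frac{b+\theta\ell}{b+\ell}\,(b+\ell) \le \frac{b+\theta\ell}{b+\ell}\,Y_t$, where the last step uses $Y_t\ge b+\ell>0$ and $\frac{b+\theta\ell}{b+\ell}\le 1$. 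Writing $q := \frac{b+\theta\ell}{b+\ell}\in(0,1)$, this says that the process $Z_t := q^{-t} Y_{t\wedge\tau}\mathds{1}\{\tau > t\}$, or more cleanly $Z_t := q^{-(t\wedge\tau)}Y_{t\wedge\tau}$, is a nonnegative supermartingale: indeed on $\{\tau > t\}$ we get $\ex{Z_{t+1}\mid\field{F}_t} = q^{-(t+1)}\ex{Y_{t+1}\mid\field{F}_t}\le q^{-(t+1)}q Y_t = Z_t$, and on $\{\tau\le t\}$ it is constant. Therefore $\ex{Z_M}\le \ex{Z_0}$.

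Third, I would read off the two sides. On the event $A := \{Y_t\ge b+\ell,\ \forall t\le M\} = \{\tau > M\}$ we have $Z_M = q^{-M}Y_M \ge q^{-M}(b+\ell)$, so $\ex{Z_M}\ge q^{-M}(b+\ell)\proba{A}$. For the initial value, $Z_0 = Y_0\mathds{1}\{\tau>0\} = Y_0\mathds{1}\{Y_0\ge b+\ell\} = (X_0+b)\mathds{1}\{X_0\ge\ell\}$; since $X_0\ge -b$ a.s. this is at most... actually I want an upper bound in terms of $B = \ex{X_0\mathds{1}\{X_0\ge\ell\}}$, so $\ex{Z_0} = \ex{(X_0+b)\mathds{1}\{X_0\ge\ell\}} = B + b\,\proba{X_0\ge\ell}$. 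Combining, $q^{-M}(b+\ell)\proba{A}\le B + b\proba{X_0\ge\ell}$, i.e. $\proba{A}\le q^M\frac{B + b\proba{X_0\ge\ell}}{b+\ell}$, hence $\proba{\exists t\le M,\ X_t<\ell} = 1 - \proba{A} \ge 1 - q^M\frac{B + b\proba{X_0\ge\ell}}{b+\ell}$. The stated bound $1 - q^M\frac{B}{\ell}$ would follow if $\frac{B + b\proba{X_0\ge\ell}}{b+\ell}\le\frac{B}{\ell}$, equivalently $\ell\, b\,\proba{X_0\ge\ell}\le b\,B$, i.e. $\ell\,\proba{X_0\ge\ell}\le B = \ex{X_0\mathds{1}\{X_0\ge\ell\}}$, which holds by Markov-type reasoning since $X_0\mathds{1}\{X_0\ge\ell\}\ge\ell\mathds{1}\{X_0\ge\ell\}$ pointwise.

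\textbf{Main obstacle.} The delicate point is getting the clean contraction factor $q=\frac{b+\theta\ell}{b+\ell}$ out of the raw hypothesis: one must use the lower bound $X_t\ge -b$ (equivalently $Y_t\ge 0$) \emph{twice}, once to turn $\ex{Y_{t+1}\mid\field{F}_t}\le\theta Y_t+(1-\theta)b$ into a genuine supermartingale statement and once, on the event $\{Y_t\ge b+\ell\}$, to convert the \emph{additive} gap estimate $\ex{Y_{t+1}\mid\field{F}_t}\le Y_t-(1-\theta)\ell$ into a \emph{multiplicative} one with ratio exactly $q$. Care is also needed in handling the stopped process $Z_t$ so that the supermartingale property survives at the stopping time and in checking integrability (all $X_t$ are assumed integrable and $Z_t$ is a finite sum/product of integrable terms up to time $M$), after which optional stopping at the fixed time $M$ is immediate. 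The final comparison $\frac{B+b\proba{X_0\ge\ell}}{b+\ell}\le\frac{B}{\ell}$ is then a one-line consequence of the pointwise inequality defining $B$.
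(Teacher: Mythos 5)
Your overall strategy is genuinely different from the paper's and, once repaired, works. The paper proceeds bare-handed: with the same stopping time $\tau=\inf\{t: X_t<\ell\}$ it derives the recursion $\ex{X_{t+1}\mathds{1}\{\tau>t+1\}}\le\theta\,\ex{X_t\mathds{1}\{\tau>t\}}+b\,\proba{\tau=t+1}$, unrolls it, rewrites $\sum_k\theta^{t+1-k}\proba{\tau=k}$ by Abel summation, and proves $\proba{\tau>t}\le\big(\tfrac{b+\theta\ell}{b+\ell}\big)^t\tfrac{B}{\ell}$ by induction. Your shift-and-rescale supermartingale plus optional stopping at the deterministic time $M$ is shorter and more conceptual, and it even yields the slightly sharper constant $\tfrac{B+b\,\proba{X_0\ge\ell}}{b+\ell}$ before you relax it to $\tfrac{B}{\ell}$ via $\ell\,\proba{X_0\ge\ell}\le B$ (that last comparison is correct).

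However, two steps as written are wrong and need repair. First, the displayed chain for the multiplicative contraction is false in the middle: on $\{Y_t\ge b+\ell\}$ one has $Y_t-(1-\theta)\ell\ \ge\ \tfrac{b+\theta\ell}{b+\ell}Y_t$, not $\le$ (the two sides agree only at $Y_t=b+\ell$, and the left side grows with slope $1$ versus slope $q<1$ on the right), so passing through the additive bound $Y_t-(1-\theta)\ell$ loses too much. The inequality you actually need, $\theta Y_t+(1-\theta)b\le \tfrac{b+\theta\ell}{b+\ell}Y_t$ on $\{Y_t\ge b+\ell\}$, is nevertheless true; the correct one-line justification is to bound the \emph{constant} term, $ (1-\theta)b\le(1-\theta)\tfrac{b}{b+\ell}Y_t$, and note $\theta+\tfrac{(1-\theta)b}{b+\ell}=\tfrac{b+\theta\ell}{b+\ell}$. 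Second, your ``cleaner'' definition $Z_t=q^{-(t\wedge\tau)}Y_{t\wedge\tau}$ does not give the stated bound: it yields $\ex{Z_0}=\ex{Y_0}=\ex{X_0}+b$, which cannot be compared to $B$ in general. You must keep the first definition $Z_t=q^{-t}Y_t\mathds{1}\{\tau>t\}$; this is still a nonnegative supermartingale because $Y_{t+1}\mathds{1}\{\tau>t+1\}\le Y_{t+1}\mathds{1}\{\tau>t\}$ (using $Y\ge 0$ and $\{\tau>t+1\}\subseteq\{\tau>t\}$), and it gives $\ex{Z_0}=B+b\,\proba{X_0\ge\ell}$ as you need. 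With these two corrections the argument is complete.
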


\subsection{Proof of Theorem \ref{thm:main}}\label{subsec:proof.thm1}
The following Lemma plays a key role in the convergence analysis.
\begin{lemma}\label{lm:key}
Suppose that the function $f:\real^d\to\real$ in \eqref{eq:prob} is $(\alpha, L)$-nearly convex with $x^*\in\real^d$ being one of its global minimizers.
Define $f^*=f(x^*)$ and let $f_{\lb}\le f^*$ be a lower bound estimation of $f$.  
Let $\lambda>0$ and $b\ge 0$ be the two constants defined by \eqref{def:b} with $\eta := \frac{2\alpha}{5L^2}$.
Define the constant $s: = \frac{\lambda}{3L}$ and the functions
$$
G(x): = x - \eta\nabla f(x) \quad\mbox{and}\quad
\sigma(x):= \sqrt{\eta s\kh{f(x) - f_{\lb}}},
\quad x\in\real^d, 
$$
Then, one has with $\xi \sim \mathcal{N}(0, \frac{1}{d}I_d)$ that 
\begin{equation}
\label{result:expect_decent}
\expect{\xi}{\norm{G(x - \sigma(x)\xi) - x^*}^2}
\le
\kh{1 - \frac{\eta\lambda}{100}}\norm{x - x^*}^2
+ \frac{\eta\lambda(5\eta\lambda + 14)}{42L}(f^* - f_{\lb})
\quad
\forall x\in\real^d.
\end{equation}
\end{lemma}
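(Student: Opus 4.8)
The plan is to expand the squared error produced by one complete GND step (a Gaussian perturbation $x\mapsto x-\sigma(x)\xi$ followed by the gradient step $G$), take the expectation over $\xi$ term by term, and then absorb the perturbation‑induced errors using the smallness of $\beta:=\beta(f,x^*,\alpha)$ guaranteed by \eqref{eq:asseq} together with the explicit choices $\eta=\tfrac{2\alpha}{5L^2}$ and $s=\tfrac{\lambda}{3L}$.

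I would first dispose of the degenerate case: if $\sigma(x)=0$ then $f(x)=f_{\lb}$, which forces $f(x)=f^*=f_{\lb}$ and hence $x=x^*$ (the unique minimizer), so $G(x-\sigma(x)\xi)=G(x^*)=x^*$ (using $\nabla f(x^*)=0$, a consequence of \eqref{eq:asp.L}) and the claim is trivial. So assume $\sigma:=\sigma(x)>0$, put $y:=x-\sigma\xi$, and expand
\[
\norm{G(x-\sigma\xi)-x^*}^2=\norm{y-x^*}^2-2\eta\lrangle{\nabla f(y),y-x^*}+\eta^2\norm{\nabla f(y)}^2 .
\]
For the first term, $\sigma$ is deterministic given $x$ and \Cref{lem:intall} gives $\ex{\xi}=0$, $\ex{\norm{\xi}^2}=1$, so $\expect{\xi}{\norm{y-x^*}^2}=\norm{x-x^*}^2+\sigma^2$. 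For the third, \eqref{eq:asp.L} gives $\norm{\nabla f(y)}^2\le L^2\norm{y-x^*}^2$ pointwise, hence $\expect{\xi}{\eta^2\norm{\nabla f(y)}^2}\le\eta^2L^2(\norm{x-x^*}^2+\sigma^2)$. The middle (cross) term is where near‑convexity enters: for each $n\in\nn^+$ pick $\bar f_n\in\mathcal{P}_\alpha(x^*,f(x^*))$ with $\abs{f(z)-\bar f_n(z)}\le(\tfrac\beta2+\tfrac1n)\norm{z-x^*}^2$ for all $z$ (via \Cref{def:beta} and the lemma following it), set $\varepsilon_n:=f-\bar f_n$, and split $\lrangle{\nabla f(y),y-x^*}=\lrangle{\nabla\bar f_n(y),y-x^*}+\lrangle{\nabla\varepsilon_n(y),y-x^*}$. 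The first summand is $\ge\alpha\norm{y-x^*}^2$ by \Cref{def:one_point}; the expectation of the second is bounded in absolute value by \eqref{result:eps_bound} of \Cref{lm:eps_bound_new}, applied with $\delta=\sigma$ and $l=\beta+\tfrac2n$. Taking $\expect{\xi}{\cdot}$ and letting $n\to\infty$ gives
\[
\expect{\xi}{\lrangle{\nabla f(y),y-x^*}}\ge\alpha\kh{\norm{x-x^*}^2+\sigma^2}-\beta\sqrt{\tfrac d2}\kh{\tfrac{\norm{x-x^*}}{\sigma\sqrt\pi}+1}\kh{\norm{x-x^*}^2+\sigma^2\kh{1+\tfrac1d}} .
\]
Combining the three pieces and using $1-2\eta\alpha+\eta^2L^2=1-\eta\lambda$ yields the intermediate estimate
\[
\expect{\xi}{\norm{G(x-\sigma\xi)-x^*}^2}\le(1-\eta\lambda)\kh{\norm{x-x^*}^2+\sigma^2}+2\eta\beta\sqrt{\tfrac d2}\kh{\tfrac{\norm{x-x^*}}{\sigma\sqrt\pi}+1}\kh{\norm{x-x^*}^2+\sigma^2\kh{1+\tfrac1d}} .
\]

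It remains to turn this into the stated bound. By \Cref{lm:nearly_convex}(b), $\tfrac{\alpha-\beta}{2}\norm{x-x^*}^2\le f(x)-f^*\le\tfrac L2\norm{x-x^*}^2$, so
\[
\eta s\,\tfrac{\alpha-\beta}{2}\norm{x-x^*}^2\le\sigma^2=\eta s\kh{f(x)-f_{\lb}}\le\eta s\kh{\tfrac L2\norm{x-x^*}^2+(f^*-f_{\lb})},
\]
and in particular $\tfrac{\norm{x-x^*}}{\sigma\sqrt\pi}\le\sqrt{\tfrac{2}{\pi\eta s(\alpha-\beta)}}$. Substituting these into the intermediate estimate and regrouping, its right‑hand side takes the form (a coefficient)$\cdot\norm{x-x^*}^2+$(a coefficient)$\cdot(f^*-f_{\lb})$. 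Plugging in $\eta=\tfrac{2\alpha}{5L^2}$ and $s=\tfrac{\lambda}{3L}$ — so $\lambda=\tfrac{8\alpha}{5}$, $\eta\lambda=\tfrac{16\alpha^2}{25L^2}\le\tfrac{16}{25}$, $\eta s=\tfrac{\eta\lambda}{3L}$, $\eta sL=\tfrac{\eta\lambda}{3}$ — and invoking \eqref{eq:asseq}, which forces $\beta\le\tfrac\alpha4$ (hence $\alpha-\beta\ge\tfrac{3\alpha}{4}$, $\sqrt{2/(\pi\eta s(\alpha-\beta))}\le\tfrac{5}{\sqrt{2\pi}}(L/\alpha)^{3/2}$) and thereby keeps $\beta\sqrt{2d}\kh{\sqrt{2/(\pi\eta s(\alpha-\beta))}+1}$ below a fixed multiple of $\alpha$, one checks that the $\norm{x-x^*}^2$‑coefficient is at most $1-\tfrac{\eta\lambda}{100}$ and the $(f^*-f_{\lb})$‑coefficient at most $\tfrac{\eta\lambda(5\eta\lambda+14)}{42L}$ (the ``$14$'' covering the bulk term $(1-\eta\lambda)\eta s=(1-\eta\lambda)\tfrac{\eta\lambda}{3L}$, while the ``$5\eta\lambda$'' piece and the deficit below $1$ in the other coefficient absorb the $\beta$‑dependent errors). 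This last piece of bookkeeping is the main obstacle: the prefactor $1-\eta\lambda$ leaves only room of order $\eta\lambda$ above $1-\tfrac{\eta\lambda}{100}$, and one must verify that the perturbation error — of size $\sim\eta\beta\sqrt d\,(L/\alpha)^{3/2}$ — fits inside that budget; this is precisely inequality \eqref{eq:eta.constrain} of \Cref{rmk:eta}, and it explains why the threshold in \eqref{eq:asseq} is $\tfrac14\sqrt{\alpha^5/(dL^3)}$ and why the step size and noise factor are chosen as they are.
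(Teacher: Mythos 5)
Your proposal is correct and follows essentially the same route as the paper's proof: the same expansion of the squared distance after the perturbed gradient step, the same decomposition $f=\bar f_n+\varepsilon_n$ with the cross term controlled by \Cref{lm:eps_bound_new} (your intermediate estimate is even marginally sharper, keeping $1+\tfrac1d$ where the paper rounds up to $2$), the same two-sided bound on $\sigma(x)$ via \Cref{lm:nearly_convex}, and the same final substitution of $\eta=\tfrac{2\alpha}{5L^2}$, $s=\tfrac{\lambda}{3L}$ with $\beta\le\tfrac\alpha4$. The explicit treatment of the degenerate case $\sigma(x)=0$ is a welcome addition the paper omits; the only thing you defer is the routine numerical verification of the two coefficient bounds (the paper's estimates $C_1<\tfrac{19}{28}\lambda$ and $C_1(1+\eta sL)<(\tfrac{99}{100}-\tfrac16)\lambda$), which is exactly the bookkeeping you correctly identify.
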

\begin{proof}
Note that
\begin{equation}\label{eq:pf_key:main}
\begin{aligned}
&\norm{G(x - \sigma(x)\xi) - x^*}^2
=
\norm{x - \sigma(x)\xi - \eta\nabla f(x - \sigma(x)\xi)-x^*}^2
\\
&=
\norm{x - \sigma(x)\xi - x^*}^2
+ \norm{\eta\nabla f(x -\sigma(x)\xi)}^2
- 2\eta \langle \nabla f(x - \sigma(x)\xi), x - \sigma(x)\xi - x^* \rangle\\
&\le (1 + \eta^2L^2)\norm{x - \sigma(x)\xi - x^*}^2
- 2\eta \langle \nabla f(x - \sigma(x)\xi), x - \sigma(x)\xi - x^* \rangle,
\end{aligned}
\end{equation}
where the last inequality comes from \eqref{eq:asp.L} of  \Cref{ass:main}.
By \cref{def:beta} we know that for any $n \in \nn^+$, there exists $\bar{f}_n \in \mathcal{P}_\alpha(x^*, f(x^*))$ satisfying that $\abs{f(x) - \bar{f}_n(x)} \le \kh{\frac{\beta(f)}{2} + \frac{1}{n}}\norm{x - x^*}^2, \forall x \in \real^d$. We use $\beta = \beta(f)$ in the following.
Denote $\epsilon_n = f - \bar{f}_n$, then we have $\abs{\epsilon_n(x)} \le \kh{\frac{\beta}{2} + \frac{1}{n}}\norm{x - x^*}^2$. From \cref{def:one_point},
$\langle \nabla \bar{f}_n(x), x - x^* \rangle \ge \alpha \norm{x - x^*}^2$ for all $x\in\RR^d$. Therefore,
\begin{equation}\label{eq:pf_key:inner}
\begin{aligned}
& -2\eta \langle \nabla f(x - \sigma(x)\xi), x - \sigma(x)\xi - x^* \rangle\\
&= - 2\eta \langle \nabla \bar{f}_n(x- \sigma(x)\xi), x - \sigma(x)\xi - x^* \rangle
- 2\eta \langle \nabla \epsilon_n(x - \sigma(x)\xi), x - \sigma(x)\xi - x^* \rangle\\
&\le - 2\eta\alpha\norm{x - \sigma(x)\xi - x^*}^2
- 2\eta \langle \nabla \epsilon_n(x - \sigma(x)\xi), x - \sigma(x)\xi - x^* \rangle.
\end{aligned}
\end{equation}
Recall from \eqref{def:lambda} that $\lambda = 2\alpha - \eta L^2 > 0$.
Combining \eqref{eq:pf_key:main} and \eqref{eq:pf_key:inner} obtains
$$
\norm{G(x - \sigma(x)\xi) - x^*}^2
\le (1-\eta\lambda)\norm{x - \sigma(x)\xi - x^*}^2
- 2\eta \langle \nabla \epsilon_n(x - \sigma(x)\xi), x - \sigma(x)\xi - x^* \rangle.
$$
This inequality holds for any $n \in \nn^*$.
Therefore, by taking expectation with respect to $\xi$ and using \eqref{eq:int.1234} of \Cref{lem:intall}, one gets
\[
\label{eq:expg}
\begin{aligned}
&\expect{\xi}{\norm{G(x + \sigma(x)\xi) - x^*}^2} 
\\
&\le (1 - \eta\lambda)\expect{\xi}{\norm{x + \sigma(x)\xi - x^*}^2}
+ 2\eta\abs{\expect{\xi}{\langle \nabla \epsilon_n(x - \sigma(x)\xi), x - \sigma(x)\xi - x^* \rangle}}\\
&= (1 - \eta\lambda)\kh{\norm{x - x^*}^2 + \sigma(x)^2}
+ 2\eta\abs{\expect{\xi}{\langle \nabla \epsilon_n(x - \sigma(x)\xi), x - \sigma(x)\xi - x^* \rangle}}.
\end{aligned}
\]

Note we have $\abs{\epsilon_n(x)} \le \kh{\frac{\beta}{2} + \frac{1}{n}}\norm{x - x^*}^2$. Therefore, by taking $\delta=\sigma(x)$ in \eqref{result:eps_bound} of \Cref{lm:eps_bound_new}, one gets
\begin{equation}
\label{eq:estimate}
\begin{aligned}
&\abs{\expect{\xi}{\langle \nabla \epsilon_n(x - \sigma(x)\xi), x - \sigma(x)\xi - x^* \rangle}}
\\&
\le \kh{\beta + \frac{2}{n}} \sqrt{\frac{d}{2}}
\kh{\frac{\norm{x - x^*}}{\sqrt{\pi}\sigma(x)} + 1}
\kh{
\norm{x - x^*}^2 + 2\sigma(x)^2
}.
\end{aligned}
\end{equation}
Combine \eqref{eq:expg} and \eqref{eq:estimate} gives
\begin{equation*}
\begin{aligned}
&\expect{\xi}{\norm{G(x + \sigma(x)\xi) - x^*}^2} 
\\
&\le (1 - \eta\lambda)\kh{\norm{x - x^*}^2 + \sigma(x)^2}
+ 2\eta\kh{\beta + \frac{2}{n}} \sqrt{\frac{d}{2}}
\kh{\frac{\norm{x - x^*}}{\sqrt{\pi}\sigma(x)} + 1}
\kh{
\norm{x - x^*}^2 + 2\sigma(x)^2
}.
\end{aligned}
\end{equation*}
Taking $n \rightarrow +\infty$ we get
\begin{equation}
\label{eq:expg.1}
\begin{aligned}
&\expect{\xi}{\norm{G(x + \sigma(x)\xi) - x^*}^2} 
\\
&\le (1 - \eta\lambda)\kh{\norm{x - x^*}^2 + \sigma(x)^2}
+ \eta\beta\sqrt{2d}
\kh{\frac{\norm{x - x^*}}{\sqrt{\pi}\sigma(x)} + 1}
\kh{
\norm{x - x^*}^2 + 2\sigma(x)^2
}.
\end{aligned}
\end{equation}
Note that $f_{\lb}\le f^*$, so that by the definition of $\sigma(x)$ one has
$$
\sqrt{\eta s\kh{f(x) - f^*}} \le \sqrt{\eta s\kh{f(x) - f_{\lb}}}=\sigma(x) =
\sqrt{\eta s\kh{f(x) - f^*}+\eta s(f^*-f_{\lb})}.
$$
According to \eqref{eq:lm.nearly_convex} of \Cref{lm:nearly_convex}, we conclude that
\[
\label{eq:pf_key:sigma}
\sqrt{\frac{\eta s(\alpha - \beta)}{2}}\norm{x - x^*}
\le \sigma(x) \le
\sqrt{\frac{\eta sL}{2}\norm{x - x^*}^2 + \eta s(f^* - f_{\lb})}.
\]
Take \eqref{eq:pf_key:sigma} into \eqref{eq:expg.1}, one has
\begin{equation}
\label{eq:expg.2}
\begin{aligned}
&\expect{\xi}{\norm{G(x + \sigma(x)\xi) - x^*}^2} 
\le (1 - \eta\lambda)\kh{\kh{1 + \frac{\eta sL}{2}}\norm{x - x^*}^2 + \eta s(f^* - f_{\lb})}\\
&\quad+ \eta\beta\sqrt{2d}
\kh{\sqrt{\frac{2}{\pi\eta s(\alpha - \beta)}} + 1}
\kh{
(1 + \eta sL)\norm{x - x^*}^2 + 2\eta s(f^* - f_{\lb})
}.
\end{aligned}
\end{equation}
Denote $C_1 := \beta\sqrt{2d}\kh{\sqrt{\frac{2}{\pi\eta s(\alpha - \beta)}} + 1}$, and then we organize \eqref{eq:expg.2} as follows
\begin{equation}
\label{eq:expg.final}
\begin{aligned}
&\expect{\xi}{\norm{G(x + \sigma(x)\xi) - x^*}^2} 
\\
&\le \left[1 - \eta\kh{\lambda\kh{1 + \frac{1}{2}\eta sL} - C_1\kh{1 + \eta sL} - \frac{1}{2}sL} \right]\norm{x - x^*}^2\\
&\quad+ (1 - \eta(\lambda - 2C_1))\eta s(f^* - f_{\lb}).
\end{aligned}
\end{equation}
By \eqref{eq:asseq} we have $\beta = \beta(f) \le \frac{1}{4}\sqrt{\frac{\alpha^5}{dL^3}}$, $L \ge \alpha$ and $d \ge 1$.
One has
$ \beta \le \frac{1}{4\sqrt{d}}\sqrt{\frac{\alpha^5}{L^3}} \le \frac{\alpha}{4}$,
so that $\frac{L}{\alpha - \beta} \le \frac{4L}{3\alpha}$.
Meanwhile, by $\eta = \frac{2\alpha}{5L^2}$ and $\lambda = 2\alpha - \eta L^2 = \frac{8\alpha}{5}$ one has
$\eta\lambda = \frac{16\alpha^2}{25L^2}$.
Note that
$$
\sqrt{\frac{2}{\pi\eta s(\alpha - \beta)}} = \sqrt{\frac{2}{\pi} \frac{3L}{\alpha - \beta}
\frac{1}{\eta\lambda}}
\le 5\sqrt{\frac{L^3}{2\pi\alpha^3}},
$$
and it is easy to verify that $\frac{5}{\sqrt{\pi}} + \sqrt{2}<\frac{152}{35}$. Therefore,
\begin{equation}\label{eq:lm.tch}
C_1
\le \frac{\alpha}{4}\sqrt{\frac{\alpha^3}{L^3}}
\kh{5\sqrt{\frac{L^3}{\pi\alpha^3}} + \sqrt{2}}
\le \frac{\alpha}{4}\kh{\frac{5}{\sqrt{\pi}} + \sqrt{2}}
<\frac{38}{35}\alpha = \frac{19}{28}\lambda.
\end{equation}
Moreover, one has $\eta\lambda\le \frac{16}{25}$,
which, together with $s = \frac{\lambda}{3L}$, implies
$1 + \eta sL = \frac{\eta\lambda}{3} + 1 \le \frac{91}{75}$.
Consequently,
$$
C_1\kh{1 + \eta sL}
< \frac{19}{28} \cdot \frac{91}{75}\lambda
= \kh{\frac{99}{100} - \frac{1}{6}}\lambda.
$$
Taking this and \eqref{eq:lm.tch}, together with $sL = \frac{\lambda}{3}$, we get
\begin{equation}
\label{eq:coef.1}
\lambda\kh{1 + \frac{1}{2}\eta sL} - C_1\kh{1 + \eta sL} - \frac{1}{2}sL
\ge \lambda\kh{1 + \frac{\eta\lambda}{6}} - \kh{\frac{99}{100} - \frac{1}{6}}\lambda - \frac{\lambda}{6}
> \frac{\lambda}{100},
\end{equation}
\begin{equation}
\label{eq:coef.2}
(1 - \eta(\lambda - 2C_1))\eta s
\le \kh{1 - \eta\kh{\lambda - 2\frac{19}{28}\lambda}}\frac{\eta\lambda}{3L}
= \frac{\eta\lambda(5\eta\lambda + 14)}{42L}.
\end{equation}
By taking \eqref{eq:coef.1} and \eqref{eq:coef.2} into \eqref{eq:expg.final}, one can get
$$
\expect{\xi}{\norm{G(x + \sigma(x)\xi) - x^*}^2}
\le \kh{1 - \frac{\eta\lambda}{100}}\norm{x - x^*}^2
+ \frac{\eta \lambda(5\eta\lambda + 14)}{42 L} (f^* - f_{\lb}),
$$
which is \eqref{result:expect_decent}, and this completes the proof.
\end{proof}

We are ready to give the proof of \Cref{thm:main}.
\begin{proof}[\bf Proof of \Cref{thm:main}]
In accordance with \Cref{lm:key}, $G(x) = x - \eta\nabla f(x)$ is the gradient descent map.
Denote $ \omega_t:= \SG(x_t)-\nabla f(x_t)$ as the random noise so that by {\Cref{asp:sg}} we know that
\[
\label{eq:expomg}
\ex{\omega_t} = 0\quad \mbox{and}\quad \ex{\norm{\omega_t}^2} \le r^2.
\]
By letting $y_t := x_t - \eta\nabla f(x_t)$ and $\sigma(x):= \sqrt{\eta s\kh{f(x) - f_{\lb}}}$, one can rewrite the iterations in \Cref{alg:gngd} as follows
$$
 \left\{\begin{array}{l}
y_t = x_t - \eta\nabla f(x_t) = G(x_t),\\
x_{t + \frac{1}{2}} = x_t - \eta\SG(x_t) = y_t - \eta\omega_t,\\
x_{t + 1} = x_{t + \frac{1}{2}} - \sigma_t\xi_t
= x_{t + \frac{1}{2}} - \sigma(x_{t + \frac{1}{2}})\xi_t,
 \end{array}\right.
 \quad 0\le t\le T,
$$
which can be reorganized as
 \begin{equation}
 \label{eq:iteration}
 \left\{\begin{array}{l}
x_{t + \frac{1}{2}} = y_t - \eta\omega_t,\\
x_{t + 1} = x_{t + \frac{1}{2}} - \sigma(x_{t + \frac{1}{2}})\xi_t,\\
y_{t + 1} = G(x_{t + 1}),
 \end{array}\right.
 \quad \quad 0\le t\le T.
 \end{equation}
Note that the $\sigma$-fields $\mathcal{F}_t = \sigma\{\xi_0, \cdots, \xi_{t - 1}, \omega_0, \cdots, \omega_{t - 1}\}$
and $\mathcal{F}_{t+\frac{1}{2}} = \sigma\{\xi_0, \cdots, \xi_{t - 1}, \omega_0, \cdots, \omega_{t - 1},\omega_t\}$
constitute a generated filtration.
We know from \Cref{lm:nearly_convex} that $x^*$ is the unique global minima of $f$, so that by \eqref{eq:iteration} we have
$$
 \begin{aligned}
\ex{\norm{y_{t + 1} - x^*}^2 \mid \field{F}_t}
&= \ex{\ex{\norm{y_{t + 1} - x^*}^2 \mid \field{F}_{t+\frac{1}{2}} }\mid \field{F}_t}\\
&= \ex{\expect{\xi_t}{\norm{G\kh{x_{t + \frac{1}{2}} - \sigma(x_{t + \frac{1}{2}})\xi_t} - x^*}^2}\mid \field{F}_t}.
 \end{aligned}
$$
Then by using \Cref{lm:key}, \eqref{def:b} and \eqref{eq:expomg} we can get
$$
\begin{aligned}
\ex{\norm{y_{t + 1} - x^*}^2 \mid \field{F}_t}
&\le \ex{\kh{1 - \frac{\eta\lambda}{100}}\norm{x_{t + \frac{1}{2}} - x^*}^2 +
\frac{\eta\lambda(5\eta\lambda + 14)}{42L}(f^* - f_{\lb}) \mid \field{F}_t}\\
&= \kh{1 - \frac{\eta\lambda}{100}}\ex{\norm{y_t - \eta\omega_t - x^*}^2 \mid \field{F}_t} + \frac{\eta\lambda(5\eta\lambda + 14)}{42L}(f^* - f_{\lb})\\
&\le \kh{1 - \frac{\eta\lambda}{100}}\norm{y_t - x^*}^2 + \eta^2r^2
+ \frac{\eta\lambda(5\eta\lambda + 14)}{42L}(f^* - f_{\lb})\\
&= \kh{1 - \frac{\eta\lambda}{100}}\norm{y_t - x^*}^2 + \eta\lambda b.
\end{aligned}
$$
Consequently,
\[
\label{eq:main.final}
\ex{\norm{y_{t + 1} - x^*}^2 -100b \mid \field{F}_t}
\le
 \kh{1 - \frac{\eta\lambda}{100}}\kh{\norm{y_t - x^*}^2 -100b}.
\]
Taking expectation on both sides of \eqref{eq:main.final} and applying the inequality recursively for $t = 0, 1, \dots, t - 1$, we obtain
\[
\label{eq:main.finalplus}
\ex{\norm{y_t - x^*}^2}
\le
\kh{1 - \frac{\eta\lambda}{100}}^t\norm{y_0 - x^*}^2 + 100b,\quad \forall\,0 \le t \le T.
\]
In case $b = 0$, the inequality \eqref{eq:main.finalplus} leads to $\ex{\norm{y_T - x^*}^2} \le \kh{1 - \frac{\eta\lambda}{100}}^T\norm{y_0 - x^*}^2$.
Thus, for any $\varepsilon > 0$, Markov's inequality implies
$$
\proba{\norm{y_T - x^*}^2 > \varepsilon} \le \frac{1}{\varepsilon}\kh{1 - \frac{\eta\lambda}{100}}^T\norm{y_0 - x^*}^2.
$$
Consequently, for any $\zeta \in (0, 1)$, one has
$\norm{y_T - x^*}^2 \le \varepsilon$ with the probability at least being $1 - \zeta$, as long as
$$T \ge \frac{100}{\eta\lambda}\ln \frac{\norm{y_0 - x^*}^2}{\zeta\varepsilon}.$$

On the other hand, if $b > 0$, by $\eta\lambda\le \frac{16}{25}$,
we can use \eqref{eq:main.final} by considering the stochastic process $\{\norm{y_t - x^*}^2 - 100b\}_{t\in\mathds N}$.
More specifically, by applying \Cref{lm:st} with $\theta = 1 - \frac{\eta\lambda}{100}$ and $\ell = 100b$ to this stochastic process one can get that for any given nonnegative integer $T$,
$$
\proba{\exists t \le T, \norm{y_t - x^*}^2 < 200b}\ge 1 - \kh{\frac{200 - {\eta\lambda } }{200 }}^T\frac{B}{100b}
$$
with
$$
B = \kh{\norm{y_0 - x^*}^2 - 100b}\proba{ \norm{y_0 - x^*}^2 \ge 200b}
\le \norm{y_0 - x^*}^2.
$$
By taking $T \ge \frac{200}{\eta\lambda}\ln \frac{\norm{y_0 - x^*}^2}{100b\zeta}$ one obtains that 
$$
 1 - \kh{1 - \frac{\eta\lambda}{200}}^T\frac{B}{100b}
 \ge 1 - \kh{1 - \frac{\eta\lambda}{200}}^T\frac{\norm{y_0 - x^*}^2}{100b} \ge 1 - \zeta,
$$
 
Furthermore, note that $y_t = x_t - \eta\nabla f(x_t)$ and $\eta L \le 2/5$. One has from \eqref{eq:asp.L} that
\[
\label{eq:coro.0}
\norm{y_t - x^*} \ge \norm{x_t - x^*} - \eta \norm{\nabla f(x_t)}
\ge \norm{x_t - x^*} - \eta L\norm{x_t - x^*}
\ge (1 - \eta L)\norm{x_t - x^*}.
\]
Then, from \eqref{eq:lm.nearly_convex} of \Cref{lm:nearly_convex} we have
\begin{equation}\label{eq:coro.1}
f(x_t) - f^*
\le \frac{L}{2}\norm{x_t - x^*}^2
\le \frac{L}{2(1 - \eta L)^2}\norm{y_t - x^*}^2.
\end{equation}
According to \Cref{thm:main}, with probability at least $1 - \zeta$,
there exists $t \le T$ such that $\norm{y_t - x^*}^2 \le 200b$.
Thus, by \eqref{eq:coro.1} we have \eqref{eq:coro.2} holds, and it, together with \Cref{lm:nearly_convex}, implies
\eqref{result:coro.11}.
Similar to \eqref{eq:coro.0}, we can get
\[
\label{eq:coro.00}
\norm{y_t - x^*}
\le \norm{x_t - x^*} + \eta \norm{\nabla f(x_t)}
\le \norm{x_t - x^*} +\eta L\norm{x_t - x^*}
 \le (1 + \eta L)\norm{x_t - x^*}
 \].
Then by \eqref{result:coro.11} we know that
$$
\norm{y_{t_*} - x^*}^2
\le (1 + \eta L)^2 \frac{200Lb}{(1 - \eta L)^2(\alpha - \beta)},
$$
which implies \eqref{result:coro.2}, and this completes the proof.
\end{proof}

\subsection{Proof of Theorem \ref{thm:double_loop}}\label{subsec:proof.thm2}

\begin{proof}[Proof of \Cref{thm:double_loop}]
 Define
$$
 b^\nu = \frac{\eta r^2}{\lambda} + \frac{5\eta\lambda + 14}{42L}(f^* - f_{\lb}^\nu),\quad \forall \nu>0.
$$
Meanwhile, define the events
$$
 \mathfrak{E}_\nu = \{f^* - f_{\lb}^i > \varepsilon,\forall i<\nu\}\cap\{\forall i < \nu, \exists t_i \le T_2, \norm{y_{t_i}^i - x^*}^2 \le 200b^i\}, \quad\forall\nu \ge 0,
$$
so that $\mathfrak{E}_0$ is defined as the total probability space.
Obviously, $\mathfrak{E}_{\nu} \supset \mathfrak{E}_{\nu + 1}$.
Moreover, if $ \mathfrak{E}_\nu$ holds for a certain $\nu>0$, by \eqref{eq:coro.1} we know that
 \begin{equation}
 \label{eq:festimate}
 f^* - f_{\lb}^{\nu - 1} > \varepsilon \quad\mbox{and}\quad
 f(x_{\min}^{\nu - 1}) - f^*
 \le  \frac{L}{2(1 - \eta L)^2} \min_{t}  \norm{y^{\nu-1}_t - x^*}^2
 \le \frac{100Lb^{\nu - 1}}{(1 - \eta L)^2}.
 \end{equation}
Consequently, if $ \mathfrak{E}_\nu$ holds,  by using the facts that $b^\varepsilon = \frac{\eta r^2}{\lambda} + \frac{5\eta\lambda + 14}{42L}  \varepsilon$ and
$\gamma = \frac{(1 - \eta L)^2\varepsilon}{(1 - \eta L)^2\varepsilon + 100Lb^\varepsilon}$, one has
\[
\label{eq:difge}
\begin{aligned}
 f^* - f_{\lb}^\nu &= (1 - \gamma)\kh{f^* - f_{\lb}^{\nu - 1}} - \gamma \kh{f(x_{\min}^{\nu - 1}) - f^*}\\
 &\ge (1 - \gamma)\kh{f^* - f_{\lb}^{\nu - 1}} - \gamma \frac{100L}{(1 - \eta L)^2}
 b^{\nu - 1}
 \\
 &
 = \frac{ 100L }{(1 - \eta L)^2\varepsilon + 100L}
 \kh{\kh{f^* - f_{\lb}^{\nu - 1}} b^\varepsilon
 -   \varepsilon   b^{\nu - 1}}
  \\
 &
 = \frac{ 100L }{(1 - \eta L)^2\varepsilon + 100L}\frac{\eta r^2}{\lambda}
 \kh{ {f^* - f_{\lb}^{\nu - 1}}
 -  \varepsilon     } > 0.
     \end{aligned}
\]
Based on \eqref{eq:difge}, we define the events
$$
 \mathfrak{A}_\nu = \left\{0 \le f^* - f_{\lb}^\nu \le \varepsilon\right\}
 \quad\mbox{and}
 \quad
 \mathfrak{B}_\nu = \left\{ \norm{y_t^\nu - x^*}^2 > 200b^\nu, \forall t \le T_2\right\},
 \quad\forall \nu\ge 0.
$$
According to the above discussions we know that $\mathfrak{E}_{\nu} \backslash \mathfrak{E}_{\nu + 1} = \mathfrak{E}_{\nu} \cap (\mathfrak{A}_\nu \cup \mathfrak{B}_\nu)$. Note that $b^\nu > b^\varepsilon$ is equivalent to $f^* - f_{\lb}^\nu > \varepsilon$. Consequently, by defining
$$
 \mathfrak{B}_\nu' =
\begin{cases}
 \mathfrak{B}_\nu, & \nu=0,
 \\
  \left\{\forall t \le T_2, \norm{y_t^\nu - x^*}^2 > 200\cdot\max\{b^\nu, b^\varepsilon\}\right\},
& \forall\nu>0,
\end{cases}
$$
 one has  $\mathfrak{A}_\nu \cup \mathfrak{B}_\nu = \mathfrak{A}_\nu \cup \mathfrak{B}_\nu', \forall\nu>0$.
 Therefore,
 \begin{equation}\label{eq:thm2.proba}
 \begin{aligned}
\proba{\mathfrak{E}_{\nu}} &= \proba{\mathfrak{E}_{\nu + 1}} + \proba{\mathfrak{E}_{\nu} \cap (\mathfrak{A}_\nu \cup \mathfrak{B}_\nu')}\\
&= \proba{\mathfrak{E}_{\nu + 1}}
+ \proba{\mathfrak{E}_{\nu} \cap \mathfrak{A}_\nu \cap \mathfrak{B}_\nu'^c}
+ \proba{\mathfrak{E}_{\nu} \cap \mathfrak{B}_\nu'}\quad\forall \nu\ge0.
 \end{aligned}
 \end{equation}
In the following, we prove
\[
\label{eq:probkey}
\proba{\mathfrak{B}_\nu' \mid \mathfrak{E}_{\nu}} \le \zeta',\quad\forall \nu\ge 0.
\]
According to \Cref{thm:main} and the bound of $T_1$ given in \eqref{eq:t1t2}, we know that \eqref{eq:probkey} holds for $\nu=0$.
For the case that $\nu>0$,  we know that $\mathfrak{E}_{\nu}$ happens implies that \eqref{eq:difge} holds.
Moreover, it holds that
$$
 \proba{\mathfrak{B}_\nu' \mid \mathfrak{E}_{\nu}} \le  \proba{ \left\{\forall t \le T_2, \norm{y_t^\nu - x^*}^2 > 200\cdot  b^\nu \right\} \mid \mathfrak{E}_{\nu}} .
$$
Thus, if $b^\nu \ge  b^\varepsilon$,  by \Cref{thm:main} we know that \eqref{eq:probkey} holds if
\begin{equation}\label{eq:thm2.t2.1}
T_2
\ge \frac{200}{\eta\lambda}\ln \frac{\norm{y_0^\nu - x^*}^2}{100b^\varepsilon\zeta'}
\ge \frac{200}{\eta\lambda}\ln \frac{\norm{y_0^\nu - x^*}^2}{100b^\nu\zeta'}.
\end{equation}
On the other hand,  suppose that  $\nu>0$ and $b^\nu<b^\varepsilon$.
Similar to the proof of \Cref{thm:main}, we define
\begin{equation}
\label{eq:defg}
G^\nu_t = \norm{y_t^\nu - x^*}^2 - 100b^\nu \ge -100b^\nu.
\end{equation}
Therefore, with $\mathfrak{E}_{\nu}$ holds,  the generated filtration is given by
$$
\mathcal{F}^\nu_t:=
\sigma\{\xi^\nu_0, \cdots, \xi^\nu_{t - 1}, \omega^\nu_0, \cdots, \omega^\nu_{t - 1}\}, \quad\forall\nu>0, 0\le t\le T_2;
$$
We know that (cf. the derivation of \cref{eq:main.final}) that,
    \begin{equation*}
        \ex{G^\nu_{t + 1} \mid \mathcal{F}_t} \le \kh{1 - \frac{\eta\lambda}{100}} G^\nu_t,\quad \forall 0\le t\le T_2-1, \
         \forall\nu>0 \mbox{ with }
      \mathfrak{E}_{\nu} \mbox{ holds}.
    \end{equation*}
Taking $\theta = 1 - \frac{\eta\lambda}{100}$, $b=100b^\nu$ and $\ell=100b^\epsilon$ in \Cref{lm:st}, one can get
\begin{equation}
\label{eq:pgtemp}
\proba{\exists t \le T_2,G^\nu_t < 100b^\varepsilon}\ge 1 - \kh{ 1- \frac{\eta\lambda}{100} \frac{     b^\varepsilon}{ b^\nu + b^\varepsilon}}^T\frac{B}{100b^\varepsilon}
\quad\mbox{with}\quad
B: = \ex{G^\nu_0 \mathds{1}\{G^\nu_0 \ge 100b^\epsilon \}\mid   \mathfrak{E}_{\nu} }  \ge 0.
\end{equation}
From the \eqref{eq:defg} we know that $ B \le \norm{y_0^\nu - x^*}^2 $.
Meanwhile, $b^\nu < b^\varepsilon$ implies $\frac{b^\varepsilon}{b^\nu + b^\varepsilon} \ge \frac{1}{2}$.
Hence, \eqref{eq:pgtemp} implies
$$
\proba{\exists t \le T_2,G^\nu_t < 100b^\varepsilon}\ge 1 - \kh{1-  \frac{\eta\lambda}{200}}^T\frac{\norm{y_0^\nu - x^*}^2 }{100b^\varepsilon}.
$$
Therefore, \eqref{eq:probkey} holds if one has
\[
\label{eq:thm2.t2.2}
T_2 \ge \frac{200}{\eta\lambda}\ln \frac{\norm{y_0^\nu - x^*}^2}{100b^\varepsilon\zeta'},
\]
which, together with \eqref{eq:thm2.t2.1}, implies that \eqref{eq:thm2.t2.2} is sufficient for \eqref{eq:probkey}.

  Note that whenever $\mathfrak{E}_{\nu}$ holds,   $x_0^\nu = x_{\min}^{\nu - 1}$. Then, by using \eqref{eq:coro.00} and    \Cref{lm:nearly_convex}  one can  obtain
 \begin{equation*}
 \norm{y_0^\nu - x^*}^2
 \le(1+\eta L)^2\|x_0^\nu-x^*\|^2
  \le\frac{2(1+\eta L)^2}{\alpha-\beta}
  \kh{f(x_{\min}^{\nu - 1})-f^*}
 \end{equation*}
 Taking \eqref{eq:festimate} into account, one gets
$$
 \norm{y_0^\nu - x^*}^2 \le \kh{\frac{1 + \eta L}{1 - \eta L}}^2\frac{200L b^{\nu-1}}{\alpha - \beta}.
$$
Moreover,  since $f(x_{\min}^{\nu - 1}) \ge f^*$ and $f_{\lb}^{\nu } = (1 - \gamma)f_{\lb}^{\nu-1} + \gamma f\kh{x_{\min}^{\nu-1}}$, one has
 \begin{equation}\label{eq:thm2.flb}
 f^* - f_{\lb}^\nu = (1 - \gamma)\kh{f^* - f_{\lb}^{\nu - 1}} - \gamma \kh{f(x_{\min}^{\nu - 1}) - f^*}
 \le f^* - f_{\lb}^{\nu - 1}.
 \end{equation}
Thus, by $\mathfrak{E}_{0}\supset\cdots\supset \mathfrak{E}_{\nu-1} \supset \mathfrak{E}_{\nu}$ one has
$b^{\nu - 1} \le b^0$, so that
 \begin{equation*}
 \frac{200}{\eta\lambda}\ln \frac{\norm{y_0^\nu - x^*}^2}{100b^\varepsilon\zeta'}
 \le \frac{200}{\eta\lambda}\ln \kh{\kh{\frac{1 + \eta L}{1 - \eta L}}^2 \frac{2Lb^0}{(\alpha - \beta)b^\varepsilon\zeta'}}
 \le T_2.
 \end{equation*}
Therefore, \eqref{eq:probkey} always holds.

Based on \eqref{eq:probkey}, by using \eqref{eq:thm2.proba} one can get
 \begin{equation}\label{eq:thm2.proba.2}
 \proba{\mathfrak{E}_\nu} - \proba{\mathfrak{E}_{\nu + 1}}
 =
 \proba{\mathfrak{E}_{\nu} \cap \mathfrak{A}_\nu \cap \mathfrak{B}_\nu'^c}
+ \proba{\mathfrak{E}_{\nu} \cap \mathfrak{B}_\nu'}
 \le \proba{\mathfrak{E}_\nu \cap \mathfrak{A}_\nu \cap \mathfrak{B}_\nu'^c} + \zeta'.
 \end{equation}
  Summing \eqref{eq:thm2.proba.2} up with $\nu$ from $0$ to $N$ and using the fact that $\proba{\mathfrak{E}_{0}}=1$, one has
$$
 \proba{\mathfrak{E}_{N + 1}} + \sum_{\nu = 0}^N \proba{\mathfrak{E}_{\nu} \cap \mathfrak{A}_\nu \cap \mathfrak{B}_\nu'^c} \ge 1 - (N + 1)\zeta'
 = 1 - \zeta
$$
 Note that $\mathfrak{E}_{\nu} \cap \mathfrak{A}_\nu \cap \mathfrak{B}_\nu'^c \subseteq \mathfrak{E}_{\nu} \backslash \mathfrak{E}_{\nu + 1}$ for all $\nu\ge 0$, so that the sets $\mathfrak{E}_{\nu} \cap \mathfrak{A}_\nu \cap \mathfrak{B}_\nu'^c$ are separated for all $\nu\ge 0$.
Furthermore, each event $\mathfrak{E}_{\nu} \cap \mathfrak{A}_\nu \cap \mathfrak{B}_\nu'^c \subseteq \mathfrak{E}_{\nu} \backslash \mathfrak{E}_{\nu + 1}$ means that there exists $t \le T_2$ (or $t\le T_1$ when $\nu=0$) such that $\norm{y_t^\nu - x^*}^2 \le 200b^\varepsilon$, which yields from assentation (b) of \Cref{thm:main} that $f\kh{x_{\min}^\nu} - f^* \le \frac{100Lb^\varepsilon}{(1 - \eta L)^2}$.
Due to the monotonicity fact that $f(x_{\min}^N)\le f(x_{\min}^\nu)$, we obtain $f\kh{x_{\min}^N} - f^* \le \frac{100Lb^\varepsilon}{(1 - \eta L)^2}$.
Therefore, the last statement that should be proved is
 \begin{equation}
 \label{eq:final}
 \mathfrak{E}_{N + 1} \Rightarrow f\kh{x_{\min}^N} - f^* \le \frac{100Lb^\varepsilon}{(1 - \eta L)^2},
 \end{equation}
so that by \Cref{lm:nearly_convex} one has $\norm{x_{\min}^N - x^*}^2 \le \frac{200Lb^\varepsilon}{(1 - \eta L)^2(\alpha - \beta)}$.

Suppose on the contrary that \eqref{eq:final} is not true.
Since $f(x_{\min}^N)\le f(x_{\min}^\nu)$ for all $\nu\ge 0$,
we know that  $f\kh{x_{\min}^\nu} - f^* \ge \frac{100Lb^\varepsilon}{(1 - \eta L)^2}$ holds for all $\nu=1,\ldots, N$.
Therefore, from  \eqref{eq:thm2.flb} we know that
$$
f^* - f_{\lb}^\nu = (1 - \gamma)\kh{f^* - f_{\lb}^{\nu - 1}} - \gamma \kh{f(x_{\min}^{\nu - 1}) - f^*}
\le (1 - \gamma)\kh{f^* - f_{\lb}^{\nu - 1}} - \gamma \frac{100Lb^\varepsilon}{(1 - \eta L)^2},
$$
 which is equivalent to
  \begin{equation*}
\kh{f^* - f_{\lb}^\nu + \frac{100Lb^\varepsilon}{(1 - \eta L)^2}} \le (1 - \gamma) \kh{f^* - f_{\lb}^{\nu - 1} + \frac{100Lb^\varepsilon}{(1 - \eta L)^2}}.
 \end{equation*}
 Therefore,  as $ \mathfrak{E}_{N + 1}$ implies $ f^* - f_{\lb}^{N} > \varepsilon$  via \eqref{eq:festimate},
 \begin{equation*}
 \varepsilon + \frac{100Lb^\varepsilon}{(1 - \eta L)^2} \le (1 - \gamma)^N \kh{f^* - f_{\lb}^0 + \frac{100Lb^\varepsilon}{(1 - \eta L)^2}}.
 \end{equation*}
Then by using
$ \gamma = \frac{(1 - \eta L)^2\varepsilon}{(1 - \eta L)^2\varepsilon + 100Lb^\varepsilon} = \frac{\varepsilon}{\varepsilon + \frac{100Lb^\varepsilon}{(1 - \eta L)^2}}$ given in \eqref{eq:defgamma}, we have
 \begin{equation*}
 \frac{\varepsilon}{\gamma} \le (1 - \gamma)^N \kh{f^* - f_{\lb}^0 - \varepsilon + \frac{\varepsilon}{\gamma}}
 \Leftrightarrow (1 - \gamma)^{-N} \le (1 - \gamma) + \gamma \cdot \frac{f^* - f_{\lb}^0}{\varepsilon},
 \end{equation*}
which contradicts to $N \ge \frac{1}{\gamma}\ln\kh{(1 - \gamma) + \gamma\cdot\frac{f^* - f_{\lb}^0}{\varepsilon}}$.
 Therefore, \eqref{eq:final} holds.

Overall, we have proved that, with probability at least $1 - \zeta$, one has $\norm{x_{\min}^N - x^*}^2 \le \frac{200Lb^\varepsilon}{(1 - \eta L)^2(\alpha - \beta)}$, and this completes the proof of the theorem.
 \end{proof}

\section{Numerical experiments}
\label{sec:numerical_experiments}
In this section, we conduct numerical experiments to test the performance of the proposed GND (\Cref{alg:gngd}) and DL-GND (\Cref{alg:dlgnd}) algorithms. 
By sampling 10,000 initial points, we run the algorithms and evaluate their performance using the mean squared error (MSE) and the non-convergence probability (N-CP). These two measures are defined by
\begin{equation}\label{eq:exp.metrics}
    \text{MSE}(t) = \mathbb{E}(\|x_t-x^*\|^2) \quad\mbox{and}\quad \text{N-CP}(t)= \mathbb{P}(\|x_t-x^*\|>10^{-3}),
\end{equation}
where $x_t$ is the random vector of $t$-th iteration and $x^*$ is the global minima. 
The MSE quantifies convergence in the state space, while the N-CP quantifies convergence in the probability space.

\subsection{Minimizing a univariate nearly convex function}
Recall that the objective function in \Cref{example:nearly_convex} is given by
\begin{equation}\label{obj:nearly_convex_repeat}
J_{n, k}^1(x) := \frac{1}{2}x^2 - \kh{1 + \frac{1}{k}}\int_{0}^{x} t(\sin t)^{2n} \dif t, \quad x\in\real,
\end{equation}
where $n,k\in\mathbb{N}^+$. Here, we consider two cases: $n=7, k=1$ and $n=112, k=2$. Notably, we can prove that $J_{112,2}^1$ is $\left(\frac{21}{25}, 1\right)$-nearly convex. However, it remains undetermined whether $J_{7,1}^1$ is nearly convex as the theoretical bound in \eqref{eq:asseq} may not be satisfied.

\begin{figure}
\centering
\begin{subfigure}{0.48\textwidth}
    \centering
    \includegraphics[width=.8\textwidth]{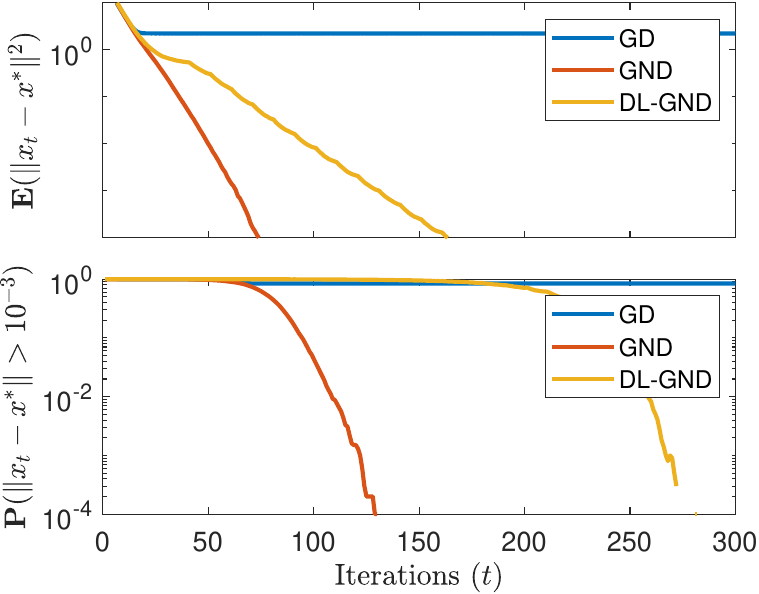}
    \caption{$n=112, k=2$}
\end{subfigure}
\begin{subfigure}{0.48\textwidth}
    \centering
    \includegraphics[width=.8\textwidth]{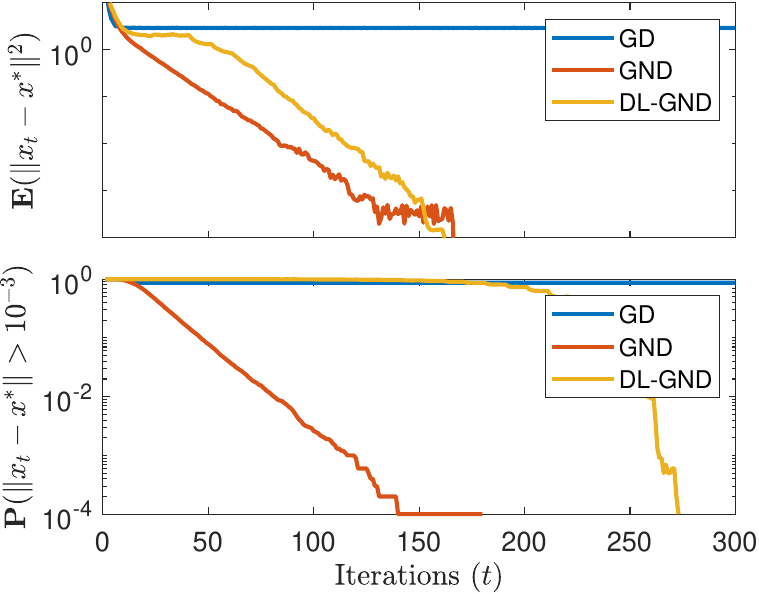}
    \caption{$n=7, k=1$}
\end{subfigure}
\caption{
\small
The MSE and N-CP versus iterations of the GD, GND, and DL-GND algorithms are evaluated by minimizing $J_{7, 1}^1(x)$ and $J_{112, 2}^1(x)$ given by \Cref{obj:nearly_convex_repeat}.
}
\label{fig:1d1}
\end{figure}

We run the GND and DL-GND algorithms, choosing 10,000 random initial points from the interval $[-10, 10]$. In this experiment, we test the full gradient case, that is, $\SG(x) = \nabla J_{n, k}^1(x)$ and $r = 0$. We set $\eta = 0.4, s = 0.5$ for the case $n=7, k=1$ and $\eta = 0.1, s = 0.2$ for the case $n=112, k=2$. The difference in parameter settings arises as the oscillation of $J_{7,1}^1(x)$ is more severe, requiring larger values for $\eta$ and $s$. In the GND algorithm, we set $f_{\lb} = 0$. For the DL-GND case, we set $f_{\lb} = -1$ and take $\gamma = 0.5, T_1 = 40, T_2 = 10, N = 30$.

The numerical behaviors of GND and DL-GND are given in \Cref{fig:1d1} in terms of the MSE and the N-CP. Both GND and DL-GND (\Cref{alg:gngd,alg:dlgnd}) algorithms are observed to converge linearly to the global optimal point in both state space and probability space, while most trajectories of GD fail to converge and get trapped in local minima. Due to the additional estimation of the lower bound of the objective function value, DL-GND requires more iterations than GND, but achieves similar accuracy.

These results properly represent our theoretical findings (\Cref{thm:main,thm:double_loop}). In addition, although we can not prove the nearly convexity of $J_{7, 1}^1(x)$, the numerical results show the effectiveness of GND and DL-GND algorithms, suggesting potential for future research on identifying weaker conditions for global convergence.

\subsection{Minimizing Rastrigin functions}
In this subsection, we test the performance of GND and DL-GND algorithms for minimizing the Rastrigin function \cite{Rastrigin1974systems}:
\begin{equation}\label{obj:Rastrigin}
J_{a, b, c, d}^3(\bm{x}) := a\kh{d-\sum_{i=1}^d \cos \left(b x^i\right)}+c \sum_{i=1}^d (x^i)^2,\quad \bm{x}=(x^1,\ldots, x^d)^T\in\real^d.
\end{equation}
We set $a = b = 1$ and vary the dimension $d$ and the coefficient $c$. For notational simplicity, we denote $J_{c, d}^3 := J_{1, 1, c, d}^3$.

\subsubsection{A two-dimensional case}
We perform numerical experiments to minimize the Rastrigin function $J_{c, d}^3(x)$ defined in \eqref{obj:Rastrigin} with $d=2$, focusing on the cases $c = 0.01$ and $c = 0.05$. The proposed GND and DL-GND algorithms are applied to $J_{c, d}^3(x)$, and their performance is evaluated using MSE and N-CP as defined in \eqref{eq:exp.metrics}. We adopt the following common setup for all experiments: Each consists of 10,000 independent and identically distributed trajectories to estimate the MSE and the N-CP. Each trajectory originates from a uniform distribution over the region $[-20, 20]^2$. Recall that $x_t$ is the random vector of $t$-th iteration. The experiments are divided into the following two parts.

\subsubsubsection{Part I: Default performance evaluation.}
In this part, we test the GND and DL-GND algorithms with their default optimal parameters, given in \Cref{tbl:para.default}. It is noted that in GND algorithm, $f_{\lb}=f^*=0$.

\begin{table}
\centering
\caption{Default parameter configurations for minimizing Rastrigin functions $J_{c, d}^3(x)$ in \eqref{obj:Rastrigin}.}
\label{tbl:para.default}
\begin{tabular}{|c|c|c|c|c|c|c|c|c|c|}
    \hline
    &&&&&&&&&\\[-10pt]
    Dimension                   & $c$                           & Algorithm &$\eta$ & $s$   & $f_{\lb}$ & $f_{\lb}^0$   & $\gamma$  & $T_1$ & $T_2$ \\
    \hline
    \multirow{4}{*}{$d = 2$}    & \multirow{2}{*}{$c = 0.05$}   & GND       & 1.5   & 2.0   & 0 & -             & -         & -     & -     \\\cline{3-10}
                                &                               & DL-GND    & 1.5   & 1.5   & - & -20           & 0.3       & 100   & 10    \\\cline{2-10}
                                & \multirow{2}{*}{$c = 0.01$}   & GND       & 1.5   & 4.0   & 0 & -             & -         & -     & -     \\\cline{3-10}
                                &                               & DL-GND    & 1.5   & 3.0   & - & -20           & 0.03      & 100   & 10    \\
    \hline
    \multirow{4}{*}{$d = 10$}   & \multirow{2}{*}{$c = 0.05$}   & GND       & 1.5   & 1.5   & 0 & -             & -         & -     & -     \\\cline{3-10}
                                &                               & DL-GND    & 1.5   & 1.4   & - & -20           & 0.025     & 100   & 10    \\\cline{2-10}
                                & \multirow{2}{*}{$c = 0.03$}   & GND       & 1.5   & 2.5   & 0 & -             & -         & -     & -     \\\cline{3-10}
                                &                               & DL-GND    & 1.5   & 1.5   & - & -20           & 0.0035    & 100   & 10    \\
    \hline
\end{tabular}
\end{table}

\begin{figure}
    \centering
    \begin{subfigure}{0.48\textwidth}
        \centering
        \includegraphics[width=.8\textwidth]{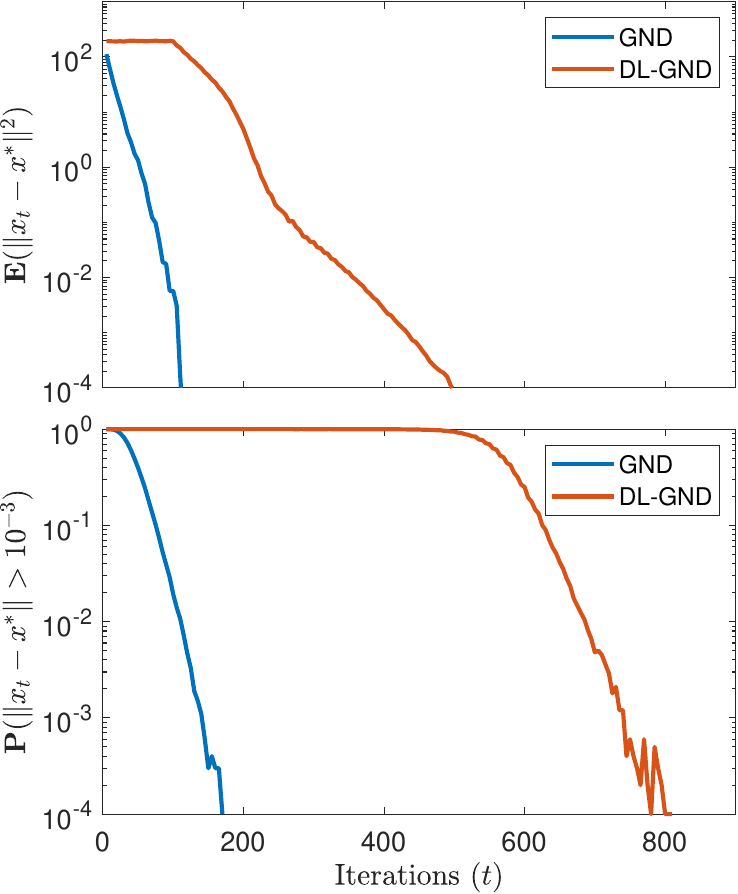}
        \caption{$c=0.05, d=2$}
    \end{subfigure}
    \begin{subfigure}{0.48\textwidth}
        \centering
        \includegraphics[width=.8\textwidth]{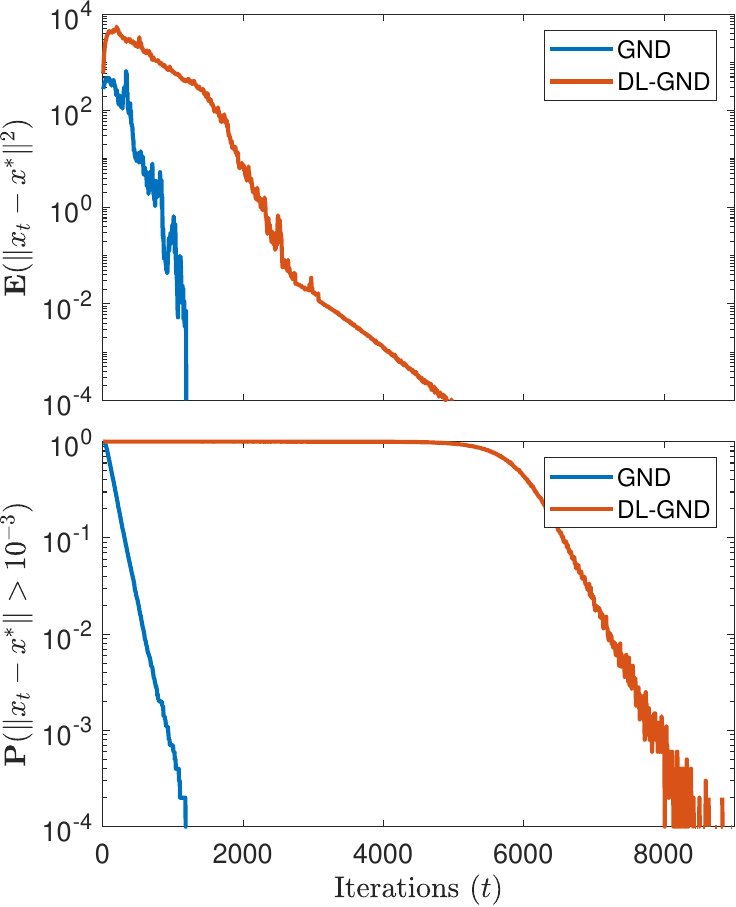}
        \caption{$c=0.01, d=2$}
    \end{subfigure}
    \caption{The convergence behavior for minimizing the two dimensional Rastrigin function $J_{c, d}^3(x)$ in \eqref{obj:Rastrigin} with $c = 0.05$ and $0.01$. The first and the second rows are the MSE and N-CP versus iterations, respectively.}
    \label{fig:2d.default}
\end{figure}

\begin{figure}
    \centering
    \includegraphics[width=.8\textwidth]{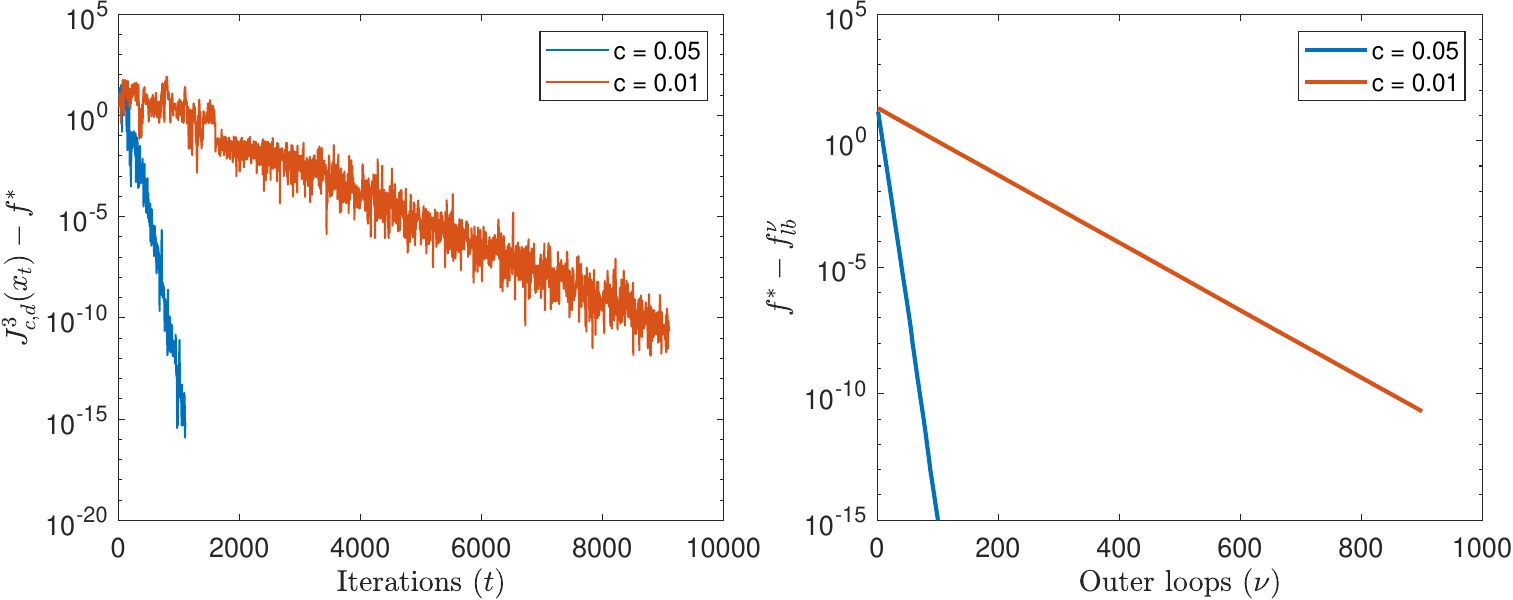}
    \caption{One trajectory for minimizing the two dimensional Rastrigin function $J_{c, d}^3(x)$ in \eqref{obj:Rastrigin} with $c = 0.01$ and $0.05$ using the DL-GND algorithms. The convergence behavior of the function value (Left) and the estimated lower bound of $f_{\lb}^\nu$ (Right).
    }
    \label{fig:2d4}
\end{figure}

The numerical results of GND and DL-GND, shown in \Cref{fig:2d.default}, are evaluated based on the metrics MSE and N-CP. The results show that both GND and DL-GND achieve linear convergence to the global minimum for $c = 0.05$ and $c = 0.01$, in both the state space and the probability space. However, the DL-GND algorithm converges 4–8 times more slowly than the GND algorithm for both objectives. This slower convergence is reasonable, given that the initial gap $f^* - f_{\lb}^0 = 20$ in DL-GND is significantly larger compared to the zero gap in GND.

\Cref{fig:2d4} illustrates a single trajectory of DL-GND, showing both the function values ($J_{c, d}^3(x_t)$) and the adaptive lower bounds ($f_{\lb}^\nu$). The result reveals that the lower bound estimate $f_{\lb}^\nu$ converges linearly to $f^*$, while the function value $J_{c, d}^3(x_t)$ also progressively converges to $f^*$, accompanied by noise-induced oscillations.

The experiments in the first part demonstrate that both GND and DL-GND efficiently achieve global convergence. Notably, for DL-GND, even in the absence of prior knowledge of the exact value of $f^*$, the adaptive strategy guarantees the linear convergence of $f_{\lb}^\nu$ to $f^*$.

\subsubsubsection{Part II: Parameter sensitivity analysis.}
In this part, we investigate the impact of various parameter choices on the convergence of the algorithms.
The GND and DL-GND algorithms are applied to optimize $J_{c, d}^3(x)$ in \eqref{obj:Rastrigin} with $d = 2$ and $c = 0.01$. 
The GND algorithm involves three hyperparameters: the step size or step size $\eta$, the noise coefficient $s$, and the lower bound of the function value $f_{\lb}$. We compare the results obtained by varying the hyperparameter $\eta, s$, and $f_{\lb}$, as shown in \Cref{fig:2d1,fig:2d2}. For the DL-GND algorithm, the key parameter influencing the convergence is $\gamma$, which controls the lower bound estimation of the function value. The corresponding results are presented in \Cref{fig:2d.gamma}.

\begin{figure}
    \centering
    \includegraphics[width=.8\textwidth]{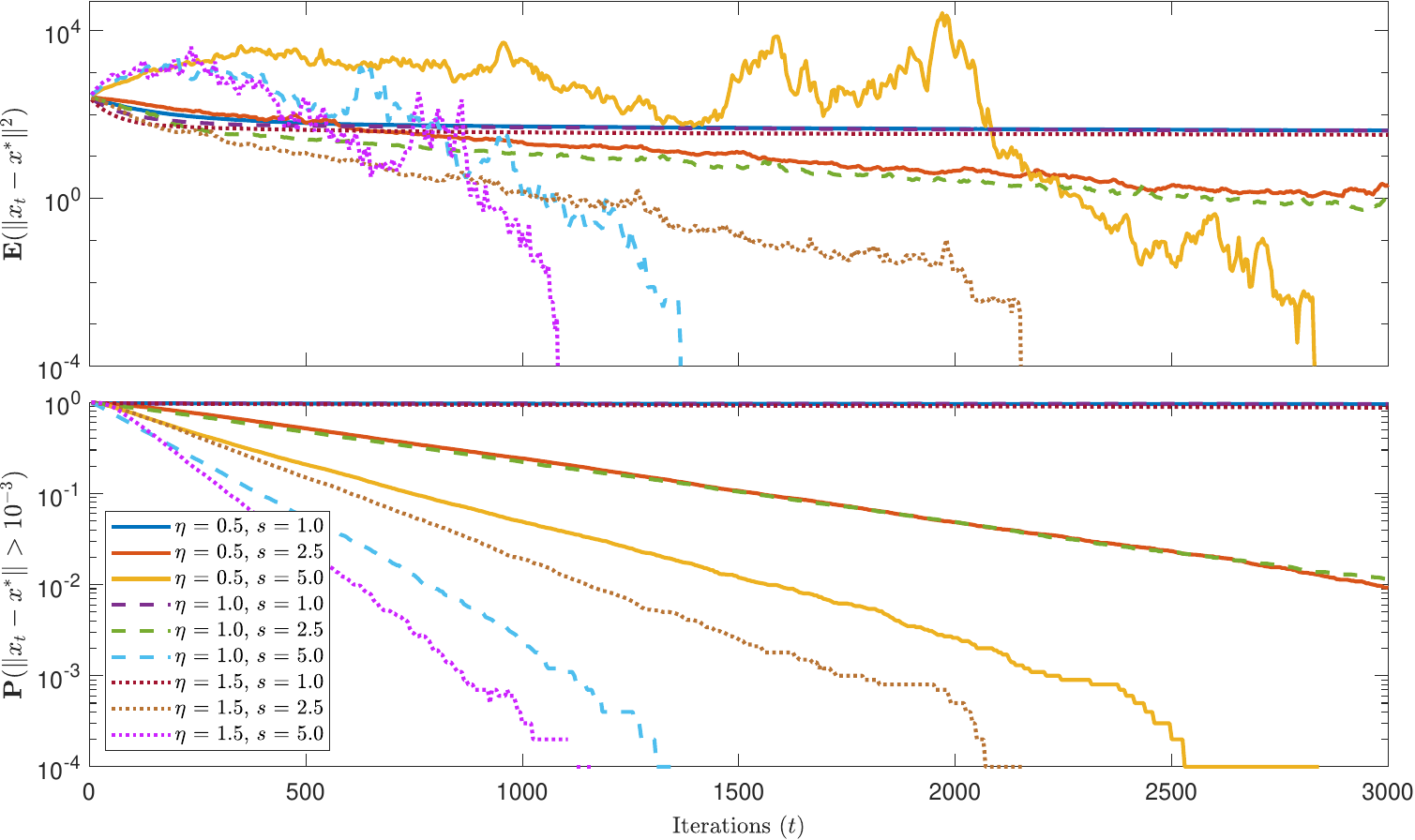}
    \caption{Minimizing the two dimensional Rastrigin function $J_{c, d}^3(x)$ in \eqref{obj:Rastrigin} with $c = 0.01$, using the GND algorithms. We fix $f_{\lb}=f^*=0$ and vary the two parameters $\eta$ and $s$.
    }
    \label{fig:2d1}
\end{figure}

\begin{figure}
    \centering
    \includegraphics[width=.8\textwidth]{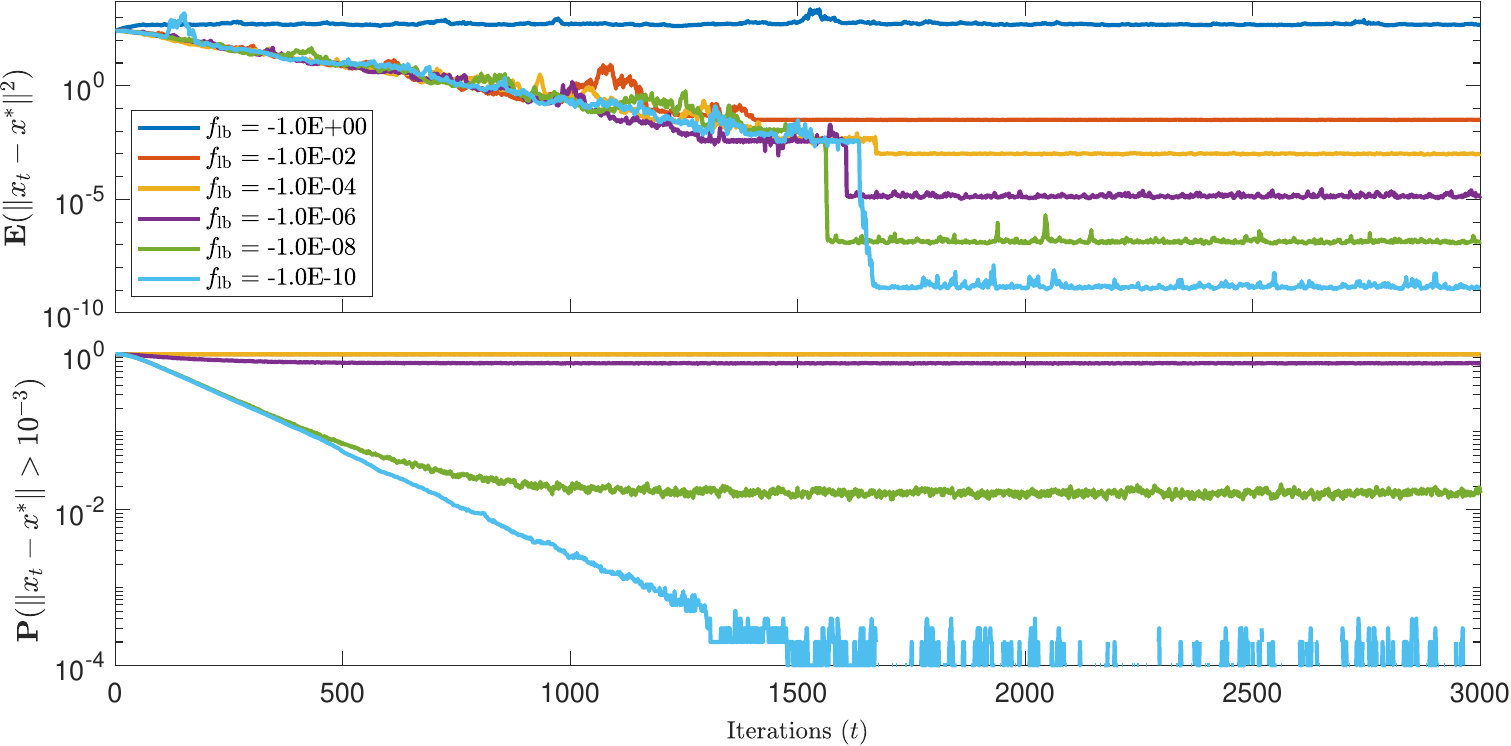}
    \caption{
        Minimizing the two-dimensional Rastrigin function $J_{c, d}^3(x)$ in \eqref{obj:Rastrigin} with $c = 0.01$, using the GND algorithms. We fix $\eta = 1.5, s = 3.0$ and vary $f_{\lb}$ from $-1$ to $-10^{-10}$.
    }
    \label{fig:2d2}
\end{figure}

\begin{figure}
    \centering
    \includegraphics[width=.8\textwidth]{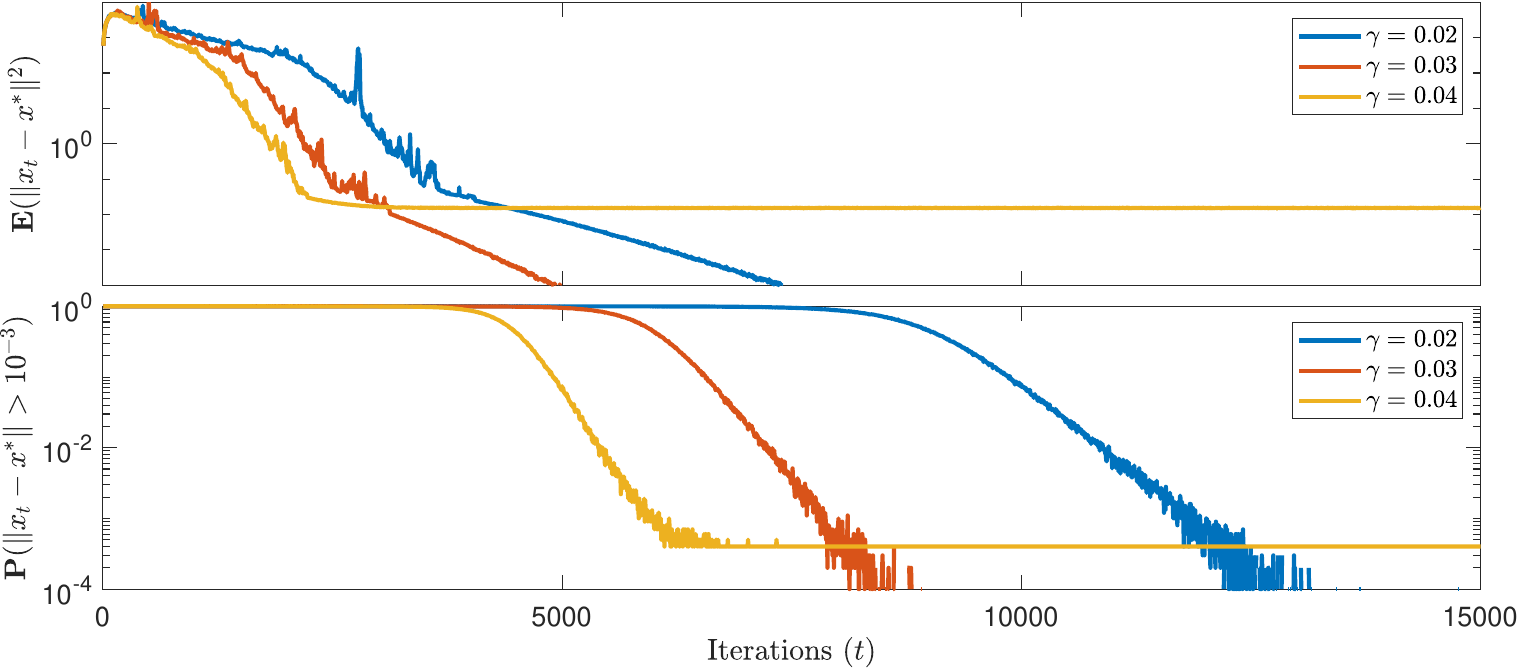}
    \caption{Minimizing the two-dimensional Rastrigin function $J_{c, d}^3(x)$ in \eqref{obj:Rastrigin} with $c = 0.01$, using the DL-GND algorithms. We fix $\eta = 1.5, s = 3.0, f_{\lb}^0 = -20, T_1 = 100, T_2 = 10$ and vary $\gamma$ from $0.02$ to $0.04$.}
    \label{fig:2d.gamma}
\end{figure}

In \Cref{fig:2d1}, the GND algorithm is applied with the lower bound fixed at $f_{\lb} = f^* = 0$, while varying the values of $\eta$ and $s$. The hyperparameters are selected from the set
\begin{equation*}
    (\eta, s) \in \{0.5, 1.0, 1.5\} \times \{1.0, 2.5, 5.0\}.
\end{equation*}
The estimated MSE and N-CP for each combination of $(\eta, s)$ are summarized in \Cref{fig:2d1}. 
The results indicate that GND converges linearly to the global minimum for all choices of $\eta$ when $s \ge 2.5$. However, when the noise coefficient $s$ is too small, the algorithm fails to converge to the global minimum, suggesting that insufficient noise makes it harder for the trajectory to escape local minima. When $s$ is chosen suitably large, the choice of $\eta$ and $s$ affects the convergence speed. Larger $\eta$ and $s$ appear to converge faster, but the converging trajectories become noisier. To achieve satisfactory performance, $\eta$ and $s$ should not be too large. 

In \Cref{fig:2d2}, the GND algorithm is applied with fixed $\eta = 1.5, s = 3.0$, 
while varying $f_{\lb}$ from $-1$ to $-10^{-10}$. The results indicate that $f_{\lb}$ plays a critical role in the global convergence of GND. Specifically, convergence to within 0.001 of $x^*$ is not achieved until $f^* - f_{\lb} \le 10^{-8}$.

In \Cref{fig:2d.gamma}, we apply the DL-GND algorithm with fixed parameters $\eta = 1.5, s = 3.0, f_{\lb}^0 = -20$ and varying $\gamma$ from $0.02$ to $0.04$. The number of iterations is set to $T_1 = 100$ for the initial stage and $T_2 = 10$ for the inner loop. The results indicate that $\gamma$ significantly affects the convergence of the DL-GND algorithm. When $\gamma$ is too small, the convergence speed is excessively slow. Conversely, when $\gamma$ is too large, some trajectories fail to converge as the lower bound estimations $f_{\lb}^\nu$ grow too quickly, occasionally surpassing $f^*$ in some outer loops. Therefore, selecting an appropriate $\gamma$ is essential to achieve efficient convergence in DL-GND.

The experiments in the second part demonstrate that, with appropriate choices of hyperparameters $\eta$ and $s$, and an accurate lower bound estimation of $J_{c, d}^3$, the GND algorithm can efficiently converge to a neighborhood of the global minimum for certain nonconvex functions. While the DL-GND algorithm eliminates the need for an accurate lower bound estimation, it requires a properly chosen $\gamma$ to efficiently update the lower bound estimation while ensuring the constraint $f_{\lb}^{\nu} \le f^*$ is satisfied. In practical implementations, these hyperparameters can be tuned to achieve optimal performance.

\subsubsection{A ten-dimensional case}
We perform experiments to minimize the 10-dimensional Rastrigin function $J_{c, d}^3(x)$ ($d = 10$) defined in \eqref{obj:Rastrigin} with $c = 0.05$ and $c = 0.03$. The proposed GND and DL-GND algorithms are applied to $J_{c, d}^3(x)$, and their performance is evaluated using mean squared error (MSE) and non-convergence probability (N-CP) as defined in \eqref{eq:exp.metrics}.

For the GND algorithm, we set $f_{\lb} = f^* = 0$, while for the DL-GND algorithm, we set $f_{\lb}^0 = -20$. All other hyperparameters, including $\eta, s, \gamma$, are treated as tunable parameters, with the detailed parameter configuration provided in \Cref{tbl:para.default}. The remaining experimental setup is analogous to the 2-dimensional case. The results, evaluated in terms of MSE and N-CP, are displayed in \Cref{fig:10d}.

From \Cref{fig:10d}, we observe that both the GND and DL-GND algorithms successfully converge to the global minimum in both settings. In the 10-dimensional space, the convergence speed is nearly 10 times slower compared to the 2-dimensional case. This slowdown is expected, as the number of local minima and saddle points grows exponentially with the dimension. These results highlight the robustness and effectiveness of the proposed GND and DL-GND algorithms in handling high-dimensional nonconvex optimization problems.

\begin{figure}
    \centering
    \begin{subfigure}{0.48\textwidth}
        \centering
        \includegraphics[width=\textwidth]{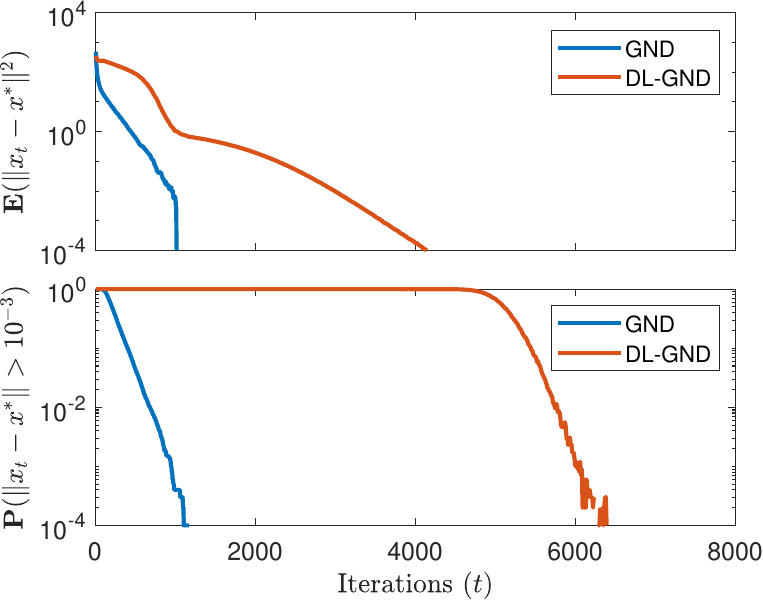}
        \caption{$c=0.05, d=10$}
    \end{subfigure}
    \begin{subfigure}{0.48\textwidth}
        \centering
        \includegraphics[width=\textwidth]{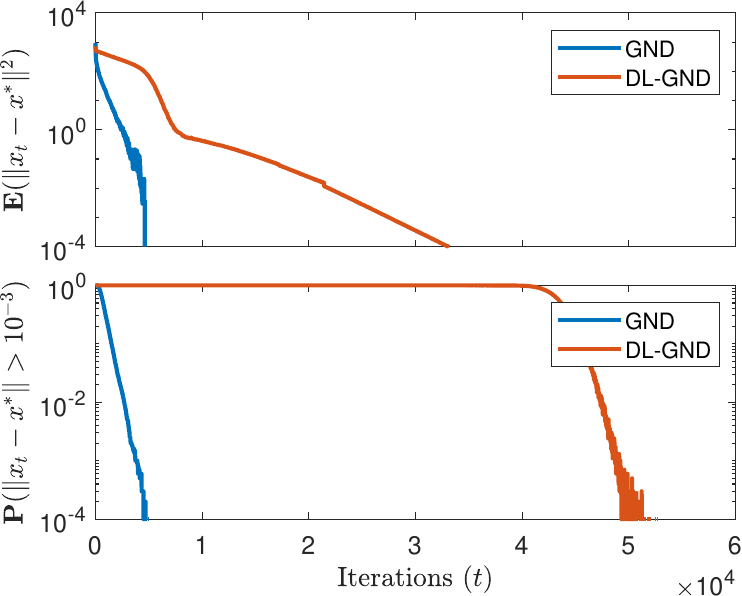}
        \caption{$c=0.03, d=10$}
    \end{subfigure}
    \caption{
        The convergence behavior for minimizing the ten-dimensional Rastrigin function $J_{c, d}^3(x)$ in \eqref{obj:Rastrigin} with $c = 0.05$ and $0.03$. The first and the second rows are the MSE and N-CP versus iterations, respectively.
    }
    \label{fig:10d}
\end{figure}

\section{Conclusions}
\label{sec:concludsion}
This paper introduces an SGD method with an adaptive Gaussian noise mechanism and proves its global convergence for minimizing nearly convex functions. 
The Gaussian noise added at each step is determined by the difference between the current and optimal values of the objective function. 
Additionally, we propose a double-loop iterative scheme that estimates the lower bound of the objective function without requiring its exact optimal value. 
Empirical results validate the effectiveness of the proposed algorithms. Future work could explore extensions to other stochastic methods and broader classes of objective functions to further generalize these findings.

\begin{appendix}
\section{Proofs of technical lemmas}

\subsection{Proof of Lemma \ref{lm:eps_bound_new}}
\label{sec:proof1}
\begin{proof}
Define the function $\vartheta(x):=\varepsilon(x+x^*)$,
so that $\vartheta(0)=0$ and $ |\vartheta(x) |\le\frac{l}{2}\norm{x}^2$.
Moreover, it holds that
$$
\nabla\varepsilon(x- \delta\xi)
=\nabla\varepsilon(x- \delta\xi-x^*+x^*)
=\nabla \vartheta(x-x^*- \delta\xi)
=\nabla \vartheta(v- \delta\xi)
\quad\mbox{with}\quad v:=x-x^*.
$$
Note that the right-hand side of \eqref{eq:eps_bound.1} is only related to the length of $v=x-x^*$.
Therefore, without loss of generality, we can assume $v= \norm{v}e_1$, where $e_1$ is the unit vector: $e_1 = (1, 0, \cdots, 0)^\top$.
Consequently,
$$
 \abs{\expect{\xi}{\langle \nabla \varepsilon(x - \delta\xi), x - x^* \rangle}}
= \abs{\expect{\xi}{\langle \nabla \vartheta(v- \delta\xi), v \rangle}}
= \abs{\expect{\xi}{\partial_{v_1}\vartheta(v - \delta\xi)}}\cdot\norm{v}.
$$
By denoting $\tilde{\vartheta}(\xi) = \vartheta(v - \delta \xi)$ one has
$\partial_{\xi_1}\tilde{\vartheta}(\xi)=-\delta\partial_{v_1}\vartheta(v - \delta \xi)$.
Note that $\rho(\xi) = \frac{d^{\frac{d}{2}}}{(2\pi)^{\frac{d}{2}}} \exp \kh{-\frac{d\norm{\xi}^2}{2}}$ so that
$\nabla \rho(\xi) = -d\rho(\xi)\xi $.
Therefore, due to partial integration,
$$
\expect{\xi}{\partial_{\xi_1}\tilde{\vartheta}(\xi)}
= \int_{\real^d} \partial_{\xi_1}\tilde{\vartheta}(\xi)\rho(\xi) \dif \xi
= -\int_{\real^d} \tilde{\vartheta}(\xi) \partial_{\xi_1}\rho(\xi) \dif \xi
= d\int_{\real^d} \tilde{\vartheta}(\xi) \rho(\xi)\xi_1 \dif \xi.
$$
Consequently,
$$
\abs{\expect{\xi}{\partial_{\xi_1}\tilde{\vartheta}(\xi)}}
\le d\int_{\real^d} \abs{\tilde{\vartheta}(\xi)} \abs{\xi_1}\rho(\xi) \dif \xi\\
\le \frac{l d}{2}\int_{\real^d} \norm{v - \delta \xi}^2 \abs{\xi_1}\rho(\xi) \dif \xi.
$$
Since $v = \norm{v}e_1$, we have
$\norm{v - \delta \xi}^2 = \norm{v}^2 + \delta^2 \norm{\xi}^2 - 2\delta\norm{v}\xi_1$.
Hence
$$
\abs{\expect{\xi}{\partial_{\xi_1}\tilde{\vartheta}(\xi)}}
\le \frac{l d}{2}\int_{\real^d} \kh{\norm{v}^2 + \delta^2 \norm{\xi}^2} \abs{\xi_1}\rho(\xi) \dif \xi\\
= \frac{l d}{2}\expect{\xi}{\kh{\norm{v}^2 + \delta^2 \norm{\xi}^2} \abs{\xi_1}}.
$$
One can write $\xi = (\xi_1, \tilde{\xi}^\top)^\top$ with $\xi_1 \in \field{N}\kh{0, \frac{1}{d}}, \tilde{\xi} \in \field{N}\kh{0, \frac{1}{d}I_{d - 1}}$.
According to \Cref{lem:intall} and the fact that $\Gamma(z+1)=z \Gamma(z)$,
$$
\ex{ |{\xi_1}|} = \frac{\sqrt{2}}{\sqrt{d}}\frac{\Gamma\kh{1}}{\Gamma\kh{\frac{1}{2}}} =\frac{\sqrt{2}}{\sqrt{d \pi}},\quad
\ex{ |\xi_1|^3} = \frac{2\sqrt{2}}{d^\frac{3}{2} \sqrt{\pi}}
\quad\mbox{and}\quad
\ex{\norm{\tilde\xi}^2} = \frac{ d-1 }{d}.
$$
Therefore,
$$
\begin{aligned}
\abs{\expect{\xi}{\partial_{\xi_1}\tilde{\vartheta}(\xi)}}
&\le \frac{l d}{2}\expect{\xi}{\norm{v}^2\abs{\xi_1} + \delta^2\abs{\xi_1}^3 + \delta^2\norm{{\tilde\xi}}^2\abs{\xi_1}}
\\&= \frac{l d}{2}\norm{v}^2\expect{\xi}{\abs{\xi_1} }
+
\delta^2\frac{l d}{2}\expect{\xi}{\abs{\xi_1}^3}
+
\delta^2\frac{l d}{2}\expect{\xi}{\norm{{\tilde\xi}}^2\abs{\xi_1}}
\\&
= \frac{l d}{2}\norm{v}^2 \frac{\sqrt{2}}{\sqrt{d \pi}}
+
\delta^2\frac{l d}{2}\frac{2\sqrt{2}}{d^\frac{3}{2} \sqrt{\pi}}
+
\delta^2\frac{l d}{2} \frac{ d-1 }{d}\frac{\sqrt{2}}{\sqrt{d \pi}}
\\&
= {l } \norm{v}^2 \frac{\sqrt{ d}}{\sqrt{2 \pi}}
+
\delta^2 {l } \frac{ 2}{ \sqrt{2 d \pi}}
+
\delta^2 {l } \frac{d-1 }{\sqrt{2 d \pi}}
\\&
=
l\frac{\sqrt{ d}}{\sqrt{2 \pi}}
\kh{\norm{v}^2 + \delta^2\kh{1 + \frac{1}{d}}}.
\end{aligned}
$$
Consequently, one has
$$
\abs{\expect{\xi}{\partial_{v_1}\vartheta(v - \delta\xi)}}\cdot\norm{v}
\le \frac{1}{\delta } \abs{\expect{\xi}{\partial_{\xi_1}\tilde{\vartheta}(\xi)}} \cdot\norm{v}
\le l\frac{\sqrt{ d} \norm{v}}{\delta \sqrt{2 \pi}}
\kh{\norm{v}^2 + \delta^2\kh{1 + \frac{1}{d}}},
$$
so that \eqref{eq:eps_bound.1} holds.

Recall that $\tilde{\vartheta}(\xi) = \vartheta(v - \delta \xi)$ and
$\partial_{\xi_j}\tilde{\vartheta}(\xi)=-\delta\partial_{v_j}\vartheta(v - \delta \xi),\forall j=1,\ldots, d$.
Thus:
$$
\langle \nabla \vartheta(v- \delta\xi), \delta \xi \rangle
 = \sum_{j = 1}^d \partial_{v_j}\vartheta(v - \delta \xi) \delta \xi_j
 = -\sum_{j = 1}^d \partial_{\xi_j}\tilde{\vartheta}(\xi) \xi_j.
$$
Note that $\rho(\xi) = \frac{d^{\frac{d}{2}}}{(2\pi)^{\frac{d}{2}}} \exp \kh{-\frac{d\norm{\xi}^2}{2}}$ so that
$\nabla \rho(\xi) = -d\rho(\xi)\xi $.
Consequently, by partial integration,
$$
\expect{\xi}{\langle \nabla \vartheta(v- \delta\xi), \delta \xi \rangle}
= -\sum_{j = 1}^d
 \int_{\real^d}
 \partial_{\xi_j}\tilde{\vartheta}(\xi) \xi_j
 \rho(\xi) \dif \xi= \sum_{j = 1}^d
 \int_{\real^d} \tilde{\vartheta}(\xi) \partial_{\xi_j} (\xi_j\rho(\xi)) \dif\xi
= \int_{\real^d} \tilde{\vartheta}(\xi) \divi(\rho(\xi)\xi) \dif \xi.
$$
Since $\nabla \rho(\xi) = -d\rho(\xi)\xi $, $\divi(\rho(\xi)\xi) = \langle \nabla \rho(\xi), \xi \rangle + d \rho(\xi) = d\rho(\xi)\kh{1 - \norm{\xi}^2}$.
Therefore,
$$
\begin{aligned}
\abs{\expect{\xi}{\langle \nabla \vartheta(v- \delta\xi), \delta \xi \rangle}}
&\le \frac{l}{2}\int_{\real^d} \norm{v - \delta \xi}^2 \abs{\divi(\rho(\xi)\xi)} \dif \xi
= \frac{dl}{2}\ex{\norm{v - \delta\xi}^2\abs{1 - \norm{\xi}^2}}
\\
 &= \frac{dl}{2}\ex{\kh{\norm{v}^2 + \delta^2\norm{\xi}^2}\abs{1 - \norm{\xi}^2}}.
 \end{aligned}
$$

According to \eqref{eq:int.1234} of \Cref{lem:intall}, we have
$$
\ex{\abs{1 - \norm{\xi}^2}} \le \sqrt{\ex{\kh{1 - \norm{\xi}^2}^2}}
= \sqrt{\ex{1 - 2\norm{\xi}^2 + \norm{\xi}^4}}
= \sqrt{\frac{2}{d}},
$$
and
$$
\ex{\abs{1 - \norm{\xi}^2}\norm{\xi}^2}
\le \sqrt{\ex{\kh{1 - \norm{\xi}^2}^2}}\sqrt{\ex{\norm{\xi}^4}}
= \sqrt{\frac{2}{d}}\sqrt{1 + \frac{2}{d}} \le \sqrt{\frac{2}{d}}\kh{1 + \frac{1}{d}}.
$$
Therefore,
$$
\abs{\expect{\xi}{\langle \nabla \vartheta(v- \delta\xi), \delta \xi \rangle}}
\le l \sqrt{\frac{d}{2}}\norm{v}^2 + l \delta^2 \sqrt{\frac{d}{2}}\kh{1 + \frac{1}{d}}\\
= l \sqrt{\frac{d}{2}}
\kh{\norm{v}^2 + \delta^2\kh{1 + \frac{1}{d}}},
$$
which implies \eqref{eq:eps_bound.2}.
Summing \eqref{eq:eps_bound.1} and \eqref{eq:eps_bound.2} up, one can readily get
$$
\abs{\expect{\xi}{\langle \nabla \varepsilon(x - \delta\xi), x - \delta\xi - x^* \rangle}}
\le l \sqrt{\frac{d}{2}} \kh{\frac{\norm{x - x^*}}{\delta\sqrt{\pi}} + 1}
\kh{\norm{x - x^*}^2 + \delta^2\kh{1 + \frac{1}{d}}},
$$
which implies \eqref{result:eps_bound}, and this completes the proof.
\end{proof}

\subsection{Proof of Lemma \ref{lm:st}}
\label{sec:proof2}
\begin{proof}
Define the stopping time
$$
\tau : = \inf\{t\mid X_t<\ell\}.
$$
Note if $\{t\mid X_t<\ell\} = \emptyset$, we define $\tau = +\infty$. Obviously, $\{\tau>t\}$ is $\field{F}_t$-measurable, so that
$$
 \ex{X_{t + 1}\mathds{1}\{\tau > t\} \mid \field{F}_t} \le \theta X_t\mathds{1}\{\tau > t\}.
$$
By taking expectation with respect to $X_0,\ldots, X_t$ on both sides one can get
 \begin{equation}\label{eq:lm.st.1}
 \ex{X_{t + 1}\mathds{1}\{\tau > t\}} \le \theta \ex{X_t\mathds{1}\{\tau > t\}}.
 \end{equation}
Note that $\mathds{1}\{\tau > t\} = \mathds{1}\{\tau > t + 1\} + \mathds{1}\{\tau = t + 1\}$, so that
 \begin{align}\label{eq:lm.st.2}
 \ex{X_{t + 1}\mathds{1}\{\tau > t\}}
 &= \ex{X_{t + 1}\mathds{1}\{\tau > t + 1\}} + \ex{X_{t + 1}\mathds{1}\{\tau = t + 1\}}\notag \\
 &\ge \ex{X_{t + 1}\mathds{1}\{\tau > t + 1\}} - b\proba{\tau = t + 1}.
 \end{align}
Combining \eqref{eq:lm.st.1} and \eqref{eq:lm.st.2} together obtains
$$
 \ex{X_{t + 1}\mathds{1}\{\tau > t + 1\}} \le \theta \ex{X_t\mathds{1}\{\tau > t\}} + b\proba{\tau = t + 1}.
$$
Then we have
 \begin{equation}\label{eq:lm.st.eq1}
 \ex{X_{t + 1}\mathds{1}\{\tau > t + 1\}} \le \theta^{t + 1} \ex{X_0\mathds{1}\{\tau > 0\}}
 + b \sum_{k = 1}^{t + 1} \theta^{t + 1 - k}\proba{\tau = k}.
 \end{equation}
Note that
\[\label{eq:lm.st.5}
\begin{aligned}
 \sum_{k = 1}^{t + 1} \theta^{t + 1 - k}\proba{\tau = k}
 &= \sum_{k = 1}^{t + 1} \theta^{t + 1 - k}\kh{\proba{\tau > k - 1} - \proba{\tau > k}} \\
 &= \sum_{k = 0}^{t} \theta^{t - k}\proba{\tau > k}
 - \sum_{k = 1}^{t + 1} \theta^{t + 1 - k}\proba{\tau > k} \\
 &= (1 - \theta)\sum_{k = 0}^{t} \theta^{t - k}\proba{\tau > k} + \theta^{t + 1}\proba{\tau > 0} - \proba{\tau > t + 1}.
 \end{aligned}
\]
Moreover, according to \eqref{def:st.B} one has $B = \ex{X_0 \mathds{1}\{X_0 \ge \ell\}} =\ex{X_0\mathds{1}\{\tau > 0\}} \ge \ex{\ell\mathds{1}\{\tau > 0\}} = \ell\proba{\tau > 0}$, so that
 \begin{equation}\label{eq:lm.st.6}
 \proba{\tau > 0} \le \dfrac{B}{\ell}.
 \end{equation}
Therefore, by substituting \eqref{eq:lm.st.5} and \eqref{eq:lm.st.6} into \eqref{eq:lm.st.eq1}, we have
\begin{equation}\label{eq:lm.st.7}
 \begin{aligned}
\ex{X_{t + 1}\mathds{1}\{\tau > t + 1\}} &\le \theta^{t + 1} B
 + b \kh{(1 - \theta)\sum_{k = 0}^{t} \theta^{t - k}\proba{\tau > k} + \theta^{t + 1}\dfrac{B}{\ell} - \proba{\tau > t + 1}}\\
&\le \theta^{t + 1}\kh{1 + \frac{b}{\ell}}B - b\proba{\tau > t + 1}+ (1 - \theta)b\sum_{k = 0}^{t} \theta^{t - k}\proba{\tau > k}.
 \end{aligned}
 \end{equation}
On the other hand, it holds that $\ex{X_{t + 1}\mathds{1}\{\tau > t + 1\}} \ge \ell\proba{\tau > t + 1}$, which together with \eqref{eq:lm.st.7} implies
 \begin{equation}\label{eq:lm.st.main}
 \proba{\tau > t + 1} \le \theta^{t + 1}\frac{B}{\ell} +
 \frac{(1 - \theta)b}{b + \ell}\sum_{k = 0}^{t} \theta^{t - k}\proba{\tau > k}.
 \end{equation}
In the following, we prove by conduction that
 \begin{equation}\label{eq:lm.st.main_result}
 \proba{\tau > t} \le \kh{\frac{b + \theta\ell}{b + \ell}}^t\frac{B}{\ell},\quad\forall 0\le t \le M.
 \end{equation}
Note that \eqref{eq:lm.st.main_result} trivially holds for $t = 0$ according to \eqref{eq:lm.st.6}.
Now, assume that \eqref{eq:lm.st.main_result} is verified for all $t\le n$ with $n\ge 0$.
Then by using \eqref{eq:lm.st.main} one can get
\begin{equation}\label{eq:lm.st.9}
 \proba{\tau > n + 1} \le \theta^{n + 1}\frac{B}{\ell} +
 \frac{(1 - \theta)b}{b + \ell}\sum_{t = 0}^{n} \theta^{n - t}\kh{\frac{b + \theta\ell}{b + \ell}}^t\frac{B}{\ell}.
\end{equation}
Note that
$$
\sum_{t = 0}^{n} \kh{\frac{b + \theta\ell}{\theta b + \theta\ell}}^t
= \frac{\kh{\frac{b + \theta\ell}{\theta b + \theta\ell}}^{n + 1} - 1}
{\kh{\frac{b + \theta\ell}{\theta b + \theta\ell}} - 1}
= \frac{\theta b + \theta\ell}{(1 - \theta)b}
\kh{\kh{\frac{b + \theta\ell}{\theta b + \theta\ell}}^{n + 1} - 1}.
$$
Thus we can get from \eqref{eq:lm.st.9} that
$$\begin{aligned}
\proba{\tau > n + 1} &\le \theta^{n + 1}\frac{B}{\ell} +
\frac{(1 - \theta)b}{b + \ell} \theta^t\sum_{t = 0}^{n}
\kh{\frac{b + \theta\ell}{\theta b + \theta\ell}}^t\frac{B}{\ell}\notag \\
&= \theta^{t + 1}\frac{B}{\ell} + \theta^{t + 1}
\kh{\kh{\frac{b + \theta\ell}{\theta b + \theta\ell}}^{n + 1} - 1}\frac{B}{\ell}
= \kh{\frac{b + \theta\ell}{b + \ell}}^{n + 1}\frac{B}{\ell}.
 \end{aligned}
 $$
Consequently, \eqref{eq:lm.st.main_result} holds. Therefore,
$$
 \proba{\tau \le M} \ge 1 - \kh{\frac{b + \theta\ell}{b + \ell}}^M\frac{B}{\ell}.
$$
Since the event $\tau \le M$ is equivalent to the existence of $ t \le M$ such that $X_t < \ell$, the Lemma is proved.
\end{proof}

\section{A Comparison of regularity conditions}
\label{appendix:reg_cond.sin_log}
Here we make a comparison of the regularity considerations introduced in \Cref{sec:ncf}
by imposing them on Example \ref{example:sin_log}. 
Recall that, for the given $\varepsilon \in (0, 1)$ and $R > 0$, the function $J_{\varepsilon, R}^2(x)$ in \cref{example:sin_log} is given by
\begin{equation*}
J_{\varepsilon, R}^2(x) = 
\begin{cases}
    \dfrac{1 + \varepsilon\sin\kh{2R\log \abs{x}}}{2}x^2, & x \ne 0,\\
    0, & x = 0.
\end{cases}
\end{equation*}
It is clear that $J_{\varepsilon, R}^2(x)$ is an even and continuously differentiable function on $\real$. Since $\frac{1 - \varepsilon}{2}x^2 \le J_{\varepsilon, R}^2(x) \le \frac{1 + \varepsilon}{2}x^2$, we have that $x^* = 0$ is the unique global minimizer with $f^* = 0$, and $J_{\varepsilon, R}^2(x)$ satisfies the QG condition \eqref{eq:QG} with parameter $\mu_q = 1 - \varepsilon$. Moreover, for all $x > 0$, we have
\begin{equation*}
    (J_{\varepsilon, R}^2)'(x) = \brc{1 + \varepsilon\sin(2R\log x) + \varepsilon R\cos(2R\log x)}x,
\end{equation*}
\begin{equation*}
    (J_{\varepsilon, R}^2)''(x) = 1 + \varepsilon(1 - 2R^2)\sin(2R\log x) + 3\varepsilon R\cos(2R\log x).
\end{equation*}
Hence, by the Cauchy-Schwarz inequality, we get
\begin{equation*}
    (J_{\varepsilon, R}^2)''(x) \ge 1 - \varepsilon\sqrt{(1 - 2R^2)^2 + 9R^2} = 1 - \varepsilon\sqrt{1 + 5R^2 + 4R^4}.
\end{equation*}
When $\varepsilon\sqrt{1 + 5R^2 + 4R^4} < 1$, it follows from the above inequality that $(J_{\varepsilon, R}^2)''(x) > 0$, and $J_{\varepsilon, R}^2(x)$ is strongly convex with parameter $1 - \varepsilon\sqrt{1 + 5R^2 + 4R^4}$. 
Similarly, we have
\begin{equation*}
    1 + \varepsilon\sin(2R\log x) + \varepsilon R\cos(2R\log x) \ge 1 - \varepsilon\sqrt{1 + R^2}.
\end{equation*}
Therefore, when $\varepsilon\sqrt{1 + R^2} < 1$, we obtain
\begin{equation*}
    (J_{\varepsilon, R}^2)'(x)(x - x^*) \ge \brc{1 - \varepsilon\sqrt{1 + R^2}}\abs{x - x^*}^2,
\end{equation*}
\begin{equation*}
    \abs{(J_{\varepsilon, R}^2)'(x)}^2 \ge \brc{1 - \varepsilon\sqrt{1 + R^2}}^2x^2 \ge \frac{2\brc{1 - \varepsilon\sqrt{1 + R^2}}^2}{1 + \varepsilon}(J_{\varepsilon, R}^2(x) - f^*).
\end{equation*}
These inequalities imply that, under this condition, $J_{\varepsilon, R}^2(x)$ satisfies RSI \eqref{eq:RSI} with parameter $\mu_r = 1 - \varepsilon\sqrt{1 + R^2}$, and PL condition \eqref{eq:PL} with parameter $\mu_p = \frac{\brc{1 - \varepsilon\sqrt{1 + R^2}}^2}{1 + \varepsilon}$.

\end{appendix}

\section*{Declarations}

{\bf Conflict of interest}
The authors have no relevant financial or non-financial interests to disclose.

\bibliographystyle{plain}
\bibliography{Ref}
\end{document}